\newtheorem{definition}{Definition}[section]
\newtheorem{theorem}{Theorem}[section]
\newtheorem{lemma}{Lemma}[section]
\newtheorem{proposition}{Proposition}[section]
\newtheorem{example}{Example}[section]
\newtheorem{corollary}{Corollary}[section]
\newtheorem{remark}{Remark}[section]
\DeclareMathOperator{\esssup}{ess\hspace{0.1em}sup}
\title{Information-theoretic classification of the cutoff phenomenon in Markov processes}
\author[1]{Youjia Wang\thanks{Email: e1124868@u.nus.edu}}
\author[2]{Michael C.H. Choi\thanks{Email: mchchoi@nus.edu.sg}}
\affil[1]{Department of Statistics and Data Science, National University of Singapore, Singapore}
\affil[2]{Department of Statistics and Data Science and Yale-NUS College, National University of Singapore, Singapore}
\date{\today}
\begin{document}

\maketitle

\begin{abstract}
    We investigate the cutoff phenomenon for Markov processes under information divergences such as $f$-divergences and R\'enyi divergences. We classify most commonly used divergences into four types, namely $L^2$-type, $\mathrm{TV}$-type, separation-type and $\mathrm{KL}$ divergence, in which we prove that the cutoff phenomenon are equivalent and relate the cutoff time and window among members within each type. To justify that this classification is natural, we provide examples in which the family of Markov processes exhibit cutoff in one type but not in another. We also establish new product conditions in these settings for the processes to exhibit cutoff, along with new results in non-reversible or non-normal situations. The proofs rely on a functional analytic approach towards cutoff.\\
    \textbf{Keywords}: Markov processes, cutoff, $f$-divergence, R\'enyi divergence, reversibility, spectral gap, log-Sobolev constant\\
    \textbf{AMS 2020 subject classification}: 60J05, 60J10, 60J25, 60J27, 94A15, 94A17
\end{abstract}

\section{Introduction}

Given a family of Markov processes, the cutoff phenomenon describes the abrupt convergence to equilibrium of these processes when measured by a suitable probability metric. It was first observed in the context of card shuffling that entails the total variation (TV) cutoff \parencite{aldous1986shuffling, diaconis1996cutoff}. Since then, the cutoff phenomenon has been studied in a diverse suite of important models with different probability metrics, such as cutoff under separation distance of birth-death processes \parencite{diaconis2006Separation}, $L^p$ distances with $1 \leq p \leq \infty$ \parencite{chen2010l2, chen2006cutoff}, relative entropy or KL divergence for random walk on groups or samples of Markov chains \parencite{barrera2006cut, su1995methods}, squared Hellinger distance and cutoff on product chains \parencite{chen2018cutoffs}, cutoff for L\'evy driven OU processes under the Wasserstein distance \parencite{barrera2021cutoff}, cutoff for overdamped and underdamped Langevin dynamics \parencite{barrera2020thermalisation, lee2023asymptotic}, cutoff for the Dyson OU processes under KL, TV, squared Hellinger and Wasserstein distance \parencite{boursier2023universal}, cutoff for deep neural networks \parencite{avelin2022deep}, cutoff for the Ising model on lattice \parencite{lubetzky2013cutoff}, to name but a few.

Most of the methods to quantify convergence in above models are quite ad hoc, and they often require a detailed and complicated analysis of the specific processes, which makes them hard to replicate and apply on other models. Thus, people seek to find a unified criterion to check whether a family of Markov chains exhibits cutoff, without having to dealing with the complex details. In 2004, Yuval Peres proposed the famous ``Peres' conjecture" to check whether the underlying Markov chains exhibit cutoff via a simple product condition \parencite{peres2004american}: 
\begin{equation}\label{eq:oral Peres' conjecture}
    \textbf{spectral gap}\times \textbf{mixing time}\rightarrow\infty \Longleftrightarrow \textbf{cutoff exists},
\end{equation}
where the rigorous definition of the above equation will be explained later. For lazy reversible Markov chains, measured in \textbf{total variation distance}, the product condition is a necessary condition \parencite[Chapter 18.3]{levin2017markov}, and for quite many models, the condition is also sufficient, for example Glauber dynamics on Curie-Weiss model \parencite{ding2009mixing}, birth-and-death processes \parencite{ding2010total} and Markov chains on trees \parencite{basu2017characterization}. Counterexamples to sufficiency for TV-cutoff also exist, such as the Aldous' example and Pak's example summarized in \parencite[Section 6]{chen2008cutoff}. Beyond Peres' conjecture, \parencite{basu2017characterization, hermon2018technical} relate TV-cutoff to hitting times for reversible chains, and \parencite{salez2023cutoff} proposes the varentropy criterion for non-negatively curved Markov chains. In this article, we also deal with the general aspect of cutoff phenomenon, with cutoff under various information divergences as a special focus.

In information theory, an important and natural family of information divergences is known as the $f$-divergences \parencite{sason2016f}. With different choices of the function $f$, this family encompasses most of the common divergences in the literature such as the TV distance, KL divergence, squared Hellinger distance, $\chi^2$-divergence and $\alpha$-divergence $D_\alpha$. This family is also related to the R\'enyi divergences $R_\alpha$. We shall give a brief review of these divergences in Definition \ref{def:f-divergence} below. In the context of cutoff phenomenon, the total variation cutoff remains to be a focus in a majority of papers in the literature. It thus naturally raises a question: is total variation cutoff equivalent to other cutoff such as the separation cutoff? This has been answered in the negative in the paper \parencite{jonathan2016total}. In this paper, we aim at providing a systematic and unifying framework as well as a natural classification of these divergences under which the cutoff phenomnenon is equivalent within each type. Specifically, we propose four main types of information divergences in this context, namely $L^2$-type, $\mathrm{TV}$-type, separation-type and $\mathrm{KL}$ divergence. We prove that the cutoff phenomenon is equivalent among members within each type, along with some new product conditions to verify cutoff in these settings. We summarize these results in Table \ref{tab:merged}.

\begin{table}[htbp]
\centering
\begin{tabular}{|c|c|c|c|}
\hline
\multirow{2}{*}{} & \multirow{2}{*}{\textbf{Reversible}} & \multirow{2}{*}{\textbf{Normal}} & \textbf{Non-reversible} \\
 & & & \textbf{(bounded perturbation)}\\
\hline
\multirow{5}{*}{$L^2$-type}& $L^p \, (1<p\leq \infty)$ & \multicolumn{2}{c|}{$L^p \, (1<p<\infty)$} \\

 & \parencite{chen2008cutoff} &
 \multicolumn{2}{c|}{Theorem \ref{thm:characterization of worst-case L^p cutoff with 1<p<infty, normal}, \ref{thm:L^p cutoff, non-normal}}\\
\cline{2-4}

 & $R_\alpha \, (1<\alpha\leq \infty)$ & \multicolumn{2}{c|}{$R_\alpha \, (1<\alpha<\infty)$} \\

 & Theorem \ref{thm:Renyi cutoff alpha >= 2}, \ref{thm:alpha-divergence cutoff with 1<alpha<2} & \multicolumn{2}{c|}{Theorem \ref{thm:characterization of worst-case alpha/Renyi-divergence cutoff, non-reversible}}\\
\cline{2-4}

 & \multicolumn{3}{c|}{$D_\alpha \, (1<\alpha<\infty)$, Theorem \ref{thm:f divergence cutoff F_pq}, \ref{thm:alpha-divergence cutoff with 1<alpha<2}, \ref{thm:characterization of worst-case alpha/Renyi-divergence cutoff, non-reversible}} \\

\hline
\multirow{8}{*}{$\mathrm{TV}$-type} 
 & \multicolumn{3}{c|}{Total variation distance}\\
 
 & \multicolumn{3}{c|}{$D_\alpha \, (0<\alpha<1)$} \\
 
 & \multicolumn{3}{c|}{$R_\alpha \, (0<\alpha<1)$}\\

 & \multicolumn{3}{c|}{Squared Hellinger distance}\\

 & \multicolumn{3}{c|}{Vincze-Le Cam distance}\\

 & \multicolumn{3}{c|}{Jensen-Shannon divergence}\\

 & \multicolumn{3}{c|}{Bhattacharyya distance}\\

 & \multicolumn{3}{c|}{Theorem \ref{thm:equivalence between TV-type divergence cutoff and TV cutoff}, Corollary \ref{corollary:TV-type extension to Renyi divergence}}\\
\hline
$\mathrm{KL}$-type & \multicolumn{3}{c|}{$\mathrm{KL}$ divergence, Theorem \ref{thm:KL divergence cutoff via LSI}}\\
\hline
\multirow{3}{*}{Separation-type} 
 & \multicolumn{3}{c|}{Separation distance}\\
 
 & \multicolumn{3}{c|}{Reverse-$R_\infty$ divergence} \\

 & \multicolumn{3}{c|}{Section \ref{sec:separation-type divergences}}\\
\hline
\end{tabular}
\caption{Classification of some $f$-divergences and probability metrics by equivalence under cutoff phenomenon}
\label{tab:merged}
\end{table}

In order to justify that the above classification scheme is natural and is not due to artifacts in our proofs, we provide examples in which the family of Markov processes exhibit cutoff in one type but not in another, see the list below for pointers:
\begin{itemize}
    \item $L^2$-type and $\mathrm{TV}$-type are not equivalent: Aldous' example (Example \ref{ex:Aldous}), Pak's example (Example \ref{ex:Pak})
    \item $L^2$-type and $\mathrm{KL}$-type are not equivalent: Product chains (Example \ref{eg:product chains})
    \item $L^2$-type and separation-type are not equivalent: Pak's example (Example \ref{ex:Pak})
    \item $\mathrm{TV}$-type and $\mathrm{KL}$-type are not equivalent: Pak's example (Example \ref{ex:Pak})
    \item $\mathrm{TV}$-type and separation-type are not equivalent: \parencite{jonathan2016total}
    \item $\mathrm{KL}$-type and separation-type are not equivalent: Pak's example (Example \ref{ex:Pak})
\end{itemize}

We stress that, for possibly non-normal Markov generators that satisfy a bounded perturbation condition, we are able to characterize $L^p$-cutoff ($1< p < \infty$) with a product condition, and hence to prove its equivalence with $\alpha$-divergence or R\'enyi divergence cutoff in Section \ref{subsec:nonnormal}. It should be noted this is among the few results on cutoff for non-normal Markov processes. Owing to the absence of symmetry or reversibility, this direction has not received much attention in the literature.

The rest of this paper is organized as follows. In Section \ref{sec:prelim}, we provide a brief overview on various notions in Markov processes, cutoff phenomenon as well as information divergences. In Section \ref{sec:reversible}, we present some of our main results. Specifically, we first investigate the equivalence of $L^2$-type divergences among $\alpha$-divergence and R\'enyi divergence by introducing the so-called $\mathcal{F}_{p,q}$ family in Section \ref{sec:F_p,q family} and \ref{sec:alpha-divergence, 1<alpha<=2}, followed by studying $\pi$-weighted KL divergence and TV cutoff, and new product conditions in these settings in Section \ref{sec:pi-weighted KL and TV}. We then move on to discuss cutoff phenomenon of $\mathrm{TV}$-type divergences in Section \ref{sec:TV-type divergences} and separation-type in Section \ref{sec:separation-type divergences}. We conclude this section by illustrating the results with examples in Section \ref{subsec:examples}. We proceed to focus on cutoff phenomenon of normal Markov chains on finite state spaces in Section \ref{subsec:normal}, and then to non-normal Markov chains via perturbation theory in Section \ref{subsec:nonnormal}.

\subsection{Sketch of the proof}
The proof for the classification of equivalence relationships in Table \ref{tab:merged} relies on two observations. The first one is exponential contraction can imply cutoff, and the second point is comparison between mixing times, which is used to show the equivalence within each type. Notations can be found in Section \ref{sec:prelim}. In this subsection, under some specific divergence ``$\mathbf{dist}$", we denote $\mathbf{dist}^{(n)}(t)$ as the worst-case divergence between the 
$n^{th}$ process and its stationary distribution, and $t_{\mathrm{mix},n}(\mathbf{dist},\varepsilon)$ as its corresponding mixing time.

\textbf{Exponential contraction}: If there exists $\theta_n> 0$, $\theta_n t_{\mathrm{mix},n}(\mathbf{dist},\varepsilon)\rightarrow\infty$ for all $\varepsilon>0$, and two continuous and strictly increasing functions $\phi_1,\phi_2:[0,\infty)\rightarrow [0,\infty)$ with $\phi_1(0)=\phi_2(0)=0$, such that for any $u,v\in T$ and any $n$, 
\begin{equation*}
    \phi_1\left(\mathbf{dist}^{(n)}(u+v)\right)\leq e^{-\theta_n v}\phi_2\left(\mathbf{dist}^{(n)}(u)\right),
\end{equation*}
then cutoff under $\mathbf{dist}$ occurs at $t_{\text{mix},n}(\mathbf{dist},\varepsilon)$ with cutoff window $\theta_n^{-1}$.
\begin{itemize}
    \item The proof of this observation is via taking $u>t_{\text{mix},n}(\mathbf{dist},\varepsilon), v=\theta_n^{-1}c$ then letting $c\rightarrow +\infty$, and $u<t_{\text{mix},n}(\mathbf{dist},\varepsilon)+\theta_n^{-1}c, v=-\theta_n^{-1}c$ then letting $c\rightarrow -\infty$ respectively, according to Definition \ref{def:cutoff phenomenon}.

    \item If $\theta_n=\lambda_n$ the spectral gap, this proves sufficiency in Peres' conjecture without assumption of reversibility.

    \item We will use it to obtain the sufficient condition of cutoff under $\alpha$-divergence and R\'enyi divergence for $\alpha\in (1,\infty)$ in Section \ref{sec:reversible}.
\end{itemize}
This observation inspires us to study the contraction coefficients of processes under various divergences. Therefore, apart from spectral gap, other functional constants may be used to provide criterion for cutoff, for instance the log-Sobolev constant and modified log-Sobolev constant. It also relates cutoff phenomenon to the data processing constant in information theory, see \parencite{raginsky2016strong,makur2020comparison}. 

\textbf{Comparison between mixing times}: For two divergences $\mathbf{dist}_1,\mathbf{dist}_2$, suppose there exists $C_1,C_2>0$ and two continuous and strictly increasing functions $\psi,\psi:[0,\infty)\rightarrow [0,\infty)$ with $\psi(0)=\Psi(0)=0$, such that for all $\varepsilon>0$, 
\begin{equation*}
    C_1t_{\text{mix},n}(\mathbf{dist_2},\psi(\varepsilon))\leq t_{\text{mix},n}(\mathbf{dist_1},\varepsilon)\leq C_2t_{\text{mix},n}(\mathbf{dist_2},\Psi(\varepsilon)),
\end{equation*}
then under either of the following two situations, cutoff under $\mathbf{dist}_1$ and $\mathbf{dist}_2$ are equivalent:
\begin{enumerate}[label=(\roman*)]
    \item If Peres' conjecture holds for both $\mathbf{dist}_1$ and $\mathbf{dist}_2$;

    \item If $C_1=C_2=1$.
\end{enumerate}
The first situation holds for $L^2$-type divergences, and we use adaptations of Riesz-Thorin Interpolation Theorem in the proof. Remarkably in non-reversible cases, compared to the classical result of comparison between mixing times \parencite[Proposition 5.1]{chen2008cutoff} involving the mixing time of adjoint process which is hard to deal with, we make an extension to obtain a more practical result in Theorem \ref{thm:comparison between mixing times for L^p and R_alpha, non-reversible}. The second situation holds for divergences in TV-type and separation-type, and we use properties of $f$-divergence to prove.

\section{Preliminaries}\label{sec:prelim}
In this section, we will follow the discussions as in \parencite{chen2008cutoff} 
to introduce some basic definitions and properties related to the cutoff phenomenon for general Markov processes. We first begin by introducing some basic definitions of Markov processes.

\subsection{Markov process}
\label{sec:Markov process}
Consider a sequence of Markov processes $\{X_t^{(n)}, t\in T\}_{n=1}^\infty$ with $T$ being the time index set for the $n^{th}$ Markov process $\{X_t^{(n)}\}_{t\in T}$, where we may write $T=[0,\infty)$ for continuous-time Markov process, while $T=\mathbb N$ for discrete-time Markov chains. We denote $\mathcal{X}_n$ as the state space of $\{X_t^{(n)}\}_{t\in T}$, which can be continuous or discrete. Besides, we set $p_{n}(t,x,\cdot)$ with $x\in \mathcal{X}_n$ as the transition probability measure for $n^{th}$ Markov process. When analyzing some general properties of a Markov process without stressing the order of it in the sequence $n\geq 1$, for simplicity of notation, we may omit the subscript/superscript $n$ and simply use $p(t,x,\cdot)$, $\mathcal{X}$, $T$ and $\{X_t\}_{t\in T}$ to represent the transition probability measure, state space, time index set and Markov process respectively. 

For a Markov process $\{X_t\}_{t\in T}$ on state space $\mathcal{X}$ with $p(t,x,\cdot)$ as the transition probability measure, we define $P_t$ as the Markov semigroup, which satisfies 
\begin{equation*}
    P_t f(x):=\mathbb E^x[f(X_t)]=\int_\mathcal{X}f(y)p(t,x,dy), \quad x\in \mathcal{X}
\end{equation*}
for any bounded measurable function $f$ on $\mathcal{X}$, and it is easy to verify that $P_{t+s}=P_t\circ P_s$. Besides, we denote 
\begin{equation*}
    \mu P_t(A):=\int_{\mathcal{X}}p(t,x,A)\mu(dx)
\end{equation*}
as the probability measure of $X_t$ if the initial distribution $X_0\sim \mu$ for any probability measure $\mu$. Moreover, $\{X_t\}_{t\in T}$ has an infinitesimal generator $\mathcal{A}$, which satisfies $P_t=e^{t\mathcal{A}}$ if $T=[0,\infty)$ and $\mathcal{A}=P-I$ if $T=\mathbb N$. As to a sequence of Markov processes $\{X_t^{(n)}, t\in T\}_{n=1}^\infty$, we similarly define $P_{t,n}$ as the Markov semigroup of the $n^{th}$ process. 

Suppose a Markov process $\{X_t\}_{t\in T}$ on $\mathcal{X}$ admits  $\pi$ as its unique stationary distribution, we say the process is normal if the adjoint operator $P_t^*$ of $P_t$ on $L^2(\mathcal{X}, \pi)$ satisfies $P_tP_t^*=P_t^*P_t$, and the process is reversible if $P_t^*=P_t$. Particularly for a finite Markov chain with transition matrix $\left(P(x,y)\right)_{x,y\in \mathcal{X}}$, the chain is reversible if $\pi(x)P(x,y)=\pi(y)P(y,x)$ for all $x,y\in \mathcal{X}$, and the adjoint transition matrix $P^*$ of $P$ with respect to $\pi$ is given by the time reversal, i.e.
\begin{equation*}
    P^*(x,y)=\frac{\pi(y)P(y,x)}{\pi(x)}, \quad \forall x,y\in \mathcal{X}.
\end{equation*}

For any given initial distribution $\mu_0$, if we denote $h_t=\frac{d\mu_0P_t}{d\pi}$ as the probability density function of $\mu_0P_t$ with respect to $\pi$, we have $h_t=P_t^*h_0$, since for any $A\in \mathcal{B}(\mathcal{X})$, 
\begin{equation*}
    \int_A h_t d\pi =\int_A d\mu_0P_t=\mu_0P_t(A)=\int_\mathcal{X}P_t\mathbf{1}_{A}d\mu_0=\int_\mathcal{X}P_t\mathbf{1}_{A}h_0d\pi=\int_AP_t^*h_0d\pi.
\end{equation*}
A fundamental fact regarding this is the time evolution of $h_t$ under continuous-time, i.e.
\begin{equation}\label{eq:time volution of h_t}
    \frac{\partial}{\partial t}h_t=\mathcal{A}^*h_t, \quad t\in [0,\infty),
\end{equation}
where $\mathcal{A}^*$ is the adjoint operator of $\mathcal{A}$ on $L^2(\mathcal{X},\pi)$, and this equation can be also referred to as Kolmogorov's backward equation in the context of diffusion processes. 

For a finite Markov chain $\left\{X_k\right\}_{k=0}^\infty$ with transition matrix $P$ and stationary distribution $\pi$, we define its continuized chain $\{\widehat X_t\}_{t\geq 0}$ on the same finite state space $\mathcal{X}$ with transition matrix 
\begin{equation}\label{eq:continuized chain}
    P_t(x,y):=e^{t(P-I)}(x,y)=e^{-t}\sum_{k=0}^\infty \frac{P^k(x,y)t^k}{k!},
\end{equation}
then $\pi P=\pi$ is equivalent to $\pi P_t=\pi$, i.e. $\pi$ is also the stationary distribution of $\{\widehat X_t\}_{t\geq 0}$. Moreover, $P-I$ is also the generator of $\{\widehat X_t\}_{t\geq 0}$, which implies $P_t$ is the semigroup for the continuized chain.

Denote the Dirichlet form on $L^2(\mathcal{X},\pi)$ as $\mathcal{E}_{\mathcal{A}}(f,g):=\langle f, -\mathcal{A}g \rangle_{\pi}$. Particularly for a discrete-time Markov chain with generator $\mathcal{A}=P_1-I$ and one-step transition probability $p(x,\cdot)$ on state space $\mathcal{X}$, we have 
\begin{equation*}
    \mathcal{E}_{\mathcal{A}}(f,g)=\int_{x,y\in\mathcal{X}}f(x)\left(g(x)-g(y)\right)p(x,dy)\pi(dx),
\end{equation*}
and further if $\mathcal{A}$ is reversible,
\begin{align}
    \mathcal{E}_{\mathcal{A}}(f,g)&=\frac{1}{2}\int_{x,y\in\mathcal{X}}\left(f(x)-f(y)\right)\left(g(x)-g(y)\right)p(x,dy)\pi(dx),\label{eq:expression of Dirichelet f,g, reversible}\\
    \mathcal{E}_{\mathcal{A}}(f,f)&=\frac{1}{2}\int_{x,y\in\mathcal{X}}\left(f(x)-f(y)\right)^2p(x,dy)\pi(dx).\label{eq:expression of Dirichlet f,f}
\end{align}
When estimating the convergence performance of a Markov process $\{X_t\}_{t\in T}$ with generator $\mathcal{A}$ to stationary distribution $\pi$, spectral gap is an important tool, and the spectral gap is defined in terms of Dirichlet form.

\begin{definition}
    [Spectral gap of Markov process]
    \label{def:classical spectral gap}
    For a Markov process $\{X_t\}_{t\in T}$ with infinitesimal generator $\mathcal{A}$ and stationary distribution $\pi$, the spectral gap is defined as
    \begin{equation*}
        \lambda=\lambda(\mathcal{A}):=\inf \left\{\mathcal{E}_{\mathcal{A}}(f,f):f\in L^2(\mathcal{X},\pi), \mathbb E_\pi\left[f\right]=0, \|f\|_2=1\right\}.
    \end{equation*}
    If $\mathcal{A}$ is non-reversible, we have $\lambda(\mathcal{A})=\lambda(\mathcal{A}^*)=\lambda\left(\frac{\mathcal{A}+\mathcal{A}^*}{2}\right)$.
\end{definition}
Particularly for a finite Markov chain with transition matrix $P$, we can assume its generator as the generator of the continuized chain, i.e. $\mathcal{A}=P-I$, and its spectral gap satisfies $\lambda(\mathcal{A})=1-\lambda_1$ if $P$ is reversible, where $\lambda_1$ is the second largest eigenvalue of $P$. It can be readily seen that the discrete-time finite Markov chain and its continuized chain share the same spectral gap. A useful application of spectral gap is the following Proposition \ref{prop: convergence rate of Markov semigroup}, which involves a corollary of spectral mapping theorem, see \parencite[Chapter 4]{haase2018lectures} or \parencite{whitley1968spectral}.
\begin{proposition}
[Convergence rate of Markov semigroup]
\label{prop: convergence rate of Markov semigroup}
Assume a Markov process have a semigroup $P_t$, stationary distribution $\pi$ and spectral gap $\lambda\geq 0$. For all $f\in L^2(\mathcal{X},\pi)$ and $t\in T$, we have 
\begin{align}
    \|(P_t-\Pi)(f)\|_2 &\leq e^{-\lambda t}\|f\|_2, \quad T=[0,\infty), \label{eq:L^2 convergence rate, continuous time}\\
    \|(P_t-\Pi)(f)\|_2 &\leq \kappa^t\|f\|_2, \quad T=\mathbb N, \label{eq:L^2 convergence rate, discrete time}
\end{align}
where $\kappa$ is the second largest singular value of $P_1$. Moreover, if $P_t: L^2(\mathcal{X},\pi)\rightarrow L^2(\mathcal{X},\pi)$ is normal, we have
\begin{align*}
    \left\|P_t-\Pi\right\|_{L^2\rightarrow L^2}&=e^{-t\lambda}, \quad T=[0,\infty),\\
    \left\|P_t-\Pi\right\|_{L^2\rightarrow L^2}&=\kappa^t, \quad T=\mathbb N,
\end{align*}
where $\Pi f(x):=\pi(f), \forall x\in \mathcal{X}$.
\end{proposition}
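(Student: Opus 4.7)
The plan is to reduce the problem to bounding the operator norm of the semigroup on the mean-zero subspace $L_0^2 := \{g \in L^2(\mathcal{X},\pi) : \mathbb{E}_\pi[g]=0\}$, and then apply either a derivative estimate, an iteration of singular value bounds, or the spectral mapping theorem. First, since $\pi P_t = \pi$ and $P_t \mathbf{1} = \mathbf{1}$, the expectation-projection $\Pi$ satisfies $\Pi P_t = P_t \Pi = \Pi$. Hence $(P_t - \Pi)f = P_t(f - \Pi f)$ with $f - \Pi f \in L_0^2$ and $\|f - \Pi f\|_2 \leq \|f\|_2$, and moreover $P_t$ preserves the orthogonal decomposition $L^2(\mathcal{X},\pi) = \mathbb{R}\mathbf{1} \oplus L_0^2$, so it suffices to study $P_t|_{L_0^2}$.

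For the continuous-time inequality, I would differentiate $\|P_t g\|_2^2$ for $g \in L_0^2$ (using that $P_t g$ stays mean zero, so the variational definition of $\lambda$ applies):
\begin{equation*}
    \tfrac{d}{dt}\|P_t g\|_2^2 = 2\langle P_t g, \mathcal{A}P_t g\rangle = -2\mathcal{E}_{\mathcal{A}}(P_t g, P_t g) \leq -2\lambda \|P_t g\|_2^2,
\end{equation*}
and Gronwall's inequality then gives $\|P_t g\|_2 \leq e^{-\lambda t}\|g\|_2$. For the discrete case, $P|_{L_0^2}$ has operator norm equal to the largest singular value of its restriction, which coincides with the second largest singular value $\kappa$ of $P_1 = P$ overall, since the top singular value $1$ comes from the $\mathbf{1}$-direction (Jensen's inequality with stationarity gives $\|P\|_{2\to 2} \leq 1$, and $P\mathbf{1}=\mathbf{1}$ attains it). Iterating $\|Pg\|_2 \leq \kappa\|g\|_2$ on $L_0^2$ yields the $\kappa^t$ bound.

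For the equalities under normality, I would use that for any normal operator on a Hilbert space, operator norm coincides with spectral radius. Since $P_t - \Pi$ is normal (a quick computation with $\Pi^*=\Pi$, $\Pi P_t = P_t \Pi = \Pi$, and normality of $P_t$ shows $(P_t-\Pi)^*(P_t-\Pi) = (P_t-\Pi)(P_t-\Pi)^* = P_t^*P_t - \Pi$) and equals $P_t|_{L_0^2}$ in direct-sum form, its operator norm equals the spectral radius of $P_t|_{L_0^2}$. In continuous time, the spectral mapping theorem gives $\sigma(P_t|_{L_0^2}) = \exp(t\,\sigma(\mathcal{A}|_{L_0^2}))$, and functional calculus for the normal generator identifies $\lambda = -\sup\{\operatorname{Re}(\nu) : \nu \in \sigma(\mathcal{A}|_{L_0^2})\}$, yielding the spectral radius $e^{-t\lambda}$. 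In discrete time, normality makes singular values equal to absolute values of eigenvalues, so the spectral radius of $P|_{L_0^2}$ is exactly $\kappa$, and $\sigma(P^t|_{L_0^2}) = \sigma(P|_{L_0^2})^t$ gives the spectral radius $\kappa^t$.

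The main subtlety I anticipate is the identification $\lambda = -\sup\operatorname{Re}(\sigma(\mathcal{A}|_{L_0^2}))$ for normal but non-self-adjoint $\mathcal{A}$. In finite state space this is immediate via an orthonormal eigenbasis and a finite spectrum, but in general one must invoke the spectral theorem to approximate the supremum of $\operatorname{Re}(\nu)$ by mean-zero near-eigenvectors of $\mathcal{A}$. Minor care points are domain issues for the differentiation step (resolved by density of $\operatorname{Dom}(\mathcal{A})$ together with the contractivity of $P_t$) and justifying $\|P\|_{L^2 \to L^2} \leq 1$ for Markov operators via Jensen's inequality and stationarity, both of which are standard.
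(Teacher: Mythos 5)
The paper itself does not prove Proposition~\ref{prop: convergence rate of Markov semigroup}; it cites external references (Haase's spectral mapping theorem, Whitley, and Section~3.2 of Chen--Saloff-Coste) and moves on. Your argument is correct and recovers exactly what those references provide, so there is no conflict with the paper, only a filling-in of a proof the authors left implicit.

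A few comments on rigor. The derivative--Gronwall argument for the continuous-time inequality is sound: since the definition of $\lambda$ in the paper is via the Dirichlet form, and $\langle g,\mathcal{A}g\rangle_\pi = \langle g,\tfrac{\mathcal{A}+\mathcal{A}^*}{2}g\rangle_\pi$ for real $g$, the Poincar\'e inequality gives $\tfrac{d}{dt}\|P_tg\|_2^2\le -2\lambda\|P_tg\|_2^2$ on $L_0^2$ with no normality assumption; the observation that $P_t$ leaves $L_0^2$ invariant (via $\Pi P_t=P_t\Pi=\Pi$) is the key structural point and you invoke it correctly. For the discrete-time inequality, your reasoning needs one small clarification: the claim that ``the second largest singular value $\kappa$ of $P$ equals the top singular value of $P|_{L_0^2}$'' is true because $P^*P$ commutes with $\Pi$ (both $P$ and $P^*$ fix $\mathbf{1}$ and preserve $\pi$), so $P^*P$ diagonalizes compatibly with the splitting $\mathbb{R}\mathbf{1}\oplus L_0^2$, and the singular value $1$ from $\mathbf{1}$ separates off; spelling this out avoids the appearance of circularity. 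For the normal equalities, the identity $(P_t-\Pi)^*(P_t-\Pi)=P_t^*P_t-\Pi$ and its mirror are exactly the right computation, and invoking norm-equals-spectral-radius plus spectral mapping is the standard route. Your flagged subtlety, identifying $\lambda$ with $-\sup\operatorname{Re}\,\sigma(\mathcal{A}|_{L_0^2})$ for normal $\mathcal{A}$, is resolved by noting that $\tfrac{\mathcal{A}+\mathcal{A}^*}{2}=\int\operatorname{Re}(\nu)\,dE(\nu)$ under the spectral resolution of $\mathcal{A}$, so its spectrum is the essential closure of $\operatorname{Re}(\sigma(\mathcal{A}))$; this is indeed immediate on finite state space and standard in general. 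No gaps beyond these small clarifications.
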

This result applies in both cases of continuous and discrete-time, and will play an important role in analysis of cutoff phenomenon. Further explanations and pointers can be found in \parencite[Section 3.2]{chen2008cutoff}.

\subsection{Cutoff phenomenon and $f$-divergence}
Next, we will give a brief overview on cutoff phenomenon and recall some definitions and properties of $f$-divergences. The terminology \textbf{cutoff} describes a phenomenon that a sequence of Markov processes $\{X_t^{(n)},t\in T\}_{n=1}^\infty$ may exhibit a sharp transition in their mixing time to stationary distribution as $n\rightarrow \infty$ under suitable probability metrics or information divergences. Now, we give a formal definition of cutoff phenomenon from \parencite[Definition 2.1]{chen2008cutoff}. 

\begin{definition}
    [Cutoff phenomenon, \cite{chen2008cutoff}]
    \label{def:cutoff phenomenon}
    Consider a sequence of non-increasing functions $g_n:T\rightarrow [0,\infty]$ which vanish at infinity, i.e. $g_n(\infty)=0$ for all $n\geq 1$. If $M:=\limsup_{n\rightarrow \infty}g_n(0)>0$, where $M$ can be \textbf{infinity}, then
    \begin{enumerate}[label=(\roman*)]
         \item $\{g_n\}_{n=1}^\infty$ present a \textbf{precutoff} if there exist a sequence $\{t_n\}_{n=1}^\infty$ with $t_n>0$ and $b>a>0$ such that 
         \begin{equation*}
             \limsup_{n\rightarrow \infty} \sup_{t>bt_n} g_n(t)=0,\quad \liminf_{n\rightarrow \infty} \inf_{t<at_n} g_n(t)>0.
         \end{equation*}

         \item $\{g_n\}_{n=1}^\infty$ present a \textbf{cutoff} if there exists a sequence $\{t_n\}_{n=1}^\infty$ with $t_n>0$ such that for any $\varepsilon\in (0,1)$, 
         \begin{equation*}
             \limsup_{n\rightarrow \infty} \sup_{t>(1+\varepsilon)t_n} g_n(t)=0,
             \quad \liminf_{n\rightarrow \infty} \sup_{t<(1-\varepsilon)t_n} g_n(t)=M,
         \end{equation*}
         and in this case we say $\{g_n\}_{n=1}^\infty$ have a cutoff sequence $\{t_n\}_{n=1}^\infty$.

         \item $\{g_n\}_{n=1}^\infty$ present a $(t_n,w_n)$ cutoff if $t_n>0$, $w_n\geq 0$, $w_n=o(t_n)$, and 
         \begin{equation*}
             \lim_{c\rightarrow +\infty}\widetilde G(c)=0,\quad \lim_{c\rightarrow -\infty}\underline G(c)=M,
         \end{equation*}
         where 
         \begin{align*}
             \widetilde G(c):=\limsup_{n\rightarrow \infty} \sup_{t>t_n+cw_n} g_n(t), \quad \underline G(c):=\liminf_{n\rightarrow \infty} \sup_{t<t_n+cw_n} g_n(t),
         \end{align*}
         and in this case we call $\{w_n\}_{n=1}^\infty$ as the \textbf{cutoff window}.
    \end{enumerate}
\end{definition}

There is a deep connection between cutoff phenomenon and mixing times. Given a sequence of non-negative functions $g_n$ on $T$ as described earlier, for any $\varepsilon>0$, the mixing time of $g_n$ is defined as 
\begin{equation*}
    t(g_n,\varepsilon):=\inf \{t\in T: g_n(t)\leq \varepsilon\}.
\end{equation*}
Cutoff has an alternative characterization via mixing times:
\begin{proposition}
    [Cutoff and mixing time, \cite{chen2008cutoff}, Proposition 2.3]
    \label{prop:cutoff and mixing time}
    Consider a sequence of non-increasing functions $g_n:T\rightarrow [0,\infty]$ vanishing at infinity, and recall the mixing time defined above. Let $M:=\limsup_{n\rightarrow \infty}g_n(0)>0$, then the following statements hold.
    \begin{enumerate}[label=(\roman*)]
        \item $\{g_n\}_{n=1}^\infty$ has a precutoff if and only if there exists $\varepsilon>0$ and $C\geq 1$ such that 
        \begin{equation*}
            \limsup_{n\rightarrow\infty}\frac{t(g_n,\eta)}{t(g_n,\varepsilon)}\leq C, \quad \forall\hspace{0.1em} \eta\in (0,\varepsilon).
        \end{equation*}

        \item $\{g_n\}_{n=1}^\infty$ has a cutoff if and only if for all $0<\eta<\varepsilon<M$, 
        \begin{equation*}
            \lim_{n\rightarrow\infty}\frac{t(g_n,\eta)}{t(g_n,\varepsilon)}=1.
        \end{equation*}
    \end{enumerate}
\end{proposition}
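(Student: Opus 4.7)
The plan is to prove Proposition \ref{prop:cutoff and mixing time} by translating the sup/inf-based conditions in Definition \ref{def:cutoff phenomenon} directly into statements about $t(g_n,\cdot)$, which is essentially the generalized inverse of the non-increasing function $g_n$. The basic translation dictionary, valid up to boundary effects at jump points, is
\begin{equation*}
    g_n(t)\leq \eta \iff t\geq t(g_n,\eta),\qquad g_n(t)>\varepsilon \iff t<t(g_n,\varepsilon),
\end{equation*}
so a condition of the form $\sup_{t>bt_n} g_n(t)\to 0$ becomes $t(g_n,\eta)\leq bt_n$ for all small $\eta$, and a condition of the form $\inf_{t<at_n} g_n(t)>0$ becomes $t(g_n,\varepsilon)\geq at_n$ for suitable $\varepsilon$.

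For part (i), the forward direction takes a precutoff sequence with parameters $(t_n,a,b)$: pick any $\varepsilon_0$ strictly less than $\liminf_n \inf_{t<at_n} g_n(t)>0$, so $t(g_n,\varepsilon_0)\geq at_n$ eventually; then for any $\eta\in(0,\varepsilon_0)$ the fact that $\sup_{t>bt_n}g_n(t)\to 0$ forces $t(g_n,\eta)\leq bt_n$ eventually, hence $\limsup_n t(g_n,\eta)/t(g_n,\varepsilon_0)\leq b/a=:C$. For the converse, set $t_n:=t(g_n,\varepsilon)$ (positive for large $n$ since $M>0$), and fix any $b>C$. For every fixed $\eta\in(0,\varepsilon)$ the hypothesis gives $t(g_n,\eta)\leq bt_n$ for large $n$, so $\sup_{t>bt_n}g_n(t)\leq \eta$; letting $\eta\downarrow 0$ through a countable sequence shows $\limsup_n \sup_{t>bt_n}g_n(t)=0$. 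The lower bound is immediate from monotonicity: $g_n(t)>\varepsilon$ for $t<t_n$, so taking $a=1/2$ yields $\inf_{t<at_n}g_n(t)\geq \varepsilon>0$.

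For part (ii), the forward direction runs the same argument with sharper constants. Given $0<\eta<\varepsilon<M$ and arbitrary $\delta\in(0,1)$, the two cutoff conditions applied with $\varepsilon=\delta$ yield $t(g_n,\eta)\leq (1+\delta)t_n$ and $t(g_n,\varepsilon)\geq (1-\delta)t_n$ eventually; combined with the trivial $t(g_n,\eta)\geq t(g_n,\varepsilon)$ (since $\eta<\varepsilon$), we get $1\leq t(g_n,\eta)/t(g_n,\varepsilon)\leq (1+\delta)/(1-\delta)$, and $\delta\downarrow 0$ finishes the direction. For the converse, fix any $\varepsilon_0\in(0,M)$ and put $t_n:=t(g_n,\varepsilon_0)$. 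For any $\delta>0$ and $\eta\in(0,\varepsilon_0)$, the hypothesis gives $t(g_n,\eta)\leq (1+\delta)t_n$ eventually, so $\sup_{t>(1+\delta)t_n}g_n(t)\leq \eta$, and letting $\eta\downarrow 0$ produces the first cutoff condition. For the second, take any $\varepsilon'\in(\varepsilon_0,M)$; the hypothesis applied to the pair $\varepsilon_0<\varepsilon'$ yields $t(g_n,\varepsilon')\geq (1-\delta)t_n$ eventually, so $\inf_{t<(1-\delta)t_n}g_n(t)\geq \varepsilon'$, and letting $\varepsilon'\uparrow M$ finishes the proof.

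The main subtlety I foresee is the boundary and $\limsup$/$\liminf$ bookkeeping: the infimum $\inf_{t<(1-\delta)t_n}g_n(t)$ is bounded above by $g_n(0)$, and $\liminf_n g_n(0)$ need not equal the defining $M=\limsup_n g_n(0)$, so matching the value $M$ exactly in the cutoff definition may require passing to the subsequence along which $g_n(0)\to M$, which is harmless for the statement. A secondary technical point is that if $g_n$ has jumps at the critical time $t(g_n,\eta)$, then $g_n(t(g_n,\eta))$ may exceed $\eta$; this is absorbed by letting $\eta$ vary continuously through $(0,\varepsilon)$ and taking suprema/infima strictly to one side of the critical time, which is exactly why the $\sup$ and $\inf$ in the cutoff definition are taken over open half-lines.
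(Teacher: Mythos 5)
The paper does not supply a proof of this proposition; it is quoted from Chen and Saloff-Coste (2008), so your blind argument is the only one to check, and it is essentially correct and follows the canonical generalized-inverse route used in that reference: translate $\sup_{t>s}g_n(t)\leq\eta$ into $t(g_n,\eta)\leq s$ and $\inf_{t<s}g_n(t)>\varepsilon$ into $t(g_n,\varepsilon)\geq s$, and chase constants.

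Two small remarks. First, you have (correctly) treated the lower cutoff condition as a statement about $\inf_{t<(1-\varepsilon)t_n}g_n(t)$, whereas Definition~\ref{def:cutoff phenomenon} in the paper writes $\sup_{t<(1-\varepsilon)t_n}g_n(t)$. For a non-increasing $g_n$ that supremum is just $g_n(0)$ and is independent of $t_n$, so the paper's $\sup$ must be a typo for $\inf$ (as in the Chen--Saloff-Coste original); your reading is the right one. Second, the closing subtlety you flag resolves more cleanly than ``pass to a subsequence'': if the hypothesis $\lim_n t(g_n,\eta)/t(g_n,\varepsilon)=1$ is to hold for every $0<\eta<\varepsilon<M$ with the ratios well-defined, then for each $\varepsilon<M$ one must have $t(g_n,\varepsilon)>0$ eventually, i.e.\ $g_n(0)>\varepsilon$ eventually; letting $\varepsilon\uparrow M$ gives $\liminf_n g_n(0)\geq M$ and hence $\lim_n g_n(0)=M$. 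Combined with your lower bound $\liminf_n\inf_{t<(1-\delta)t_n}g_n(t)\geq\varepsilon'$ for all $\varepsilon'<M$ and the trivial upper bound $\inf_{t<(1-\delta)t_n}g_n(t)\leq g_n(0)$, this pins the liminf to exactly $M$ along the full sequence, no subsequence needed. With that tidied, the proof stands.
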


Usually $g_n(t)$ can be chosen as a specific information divergence between the distribution of $X_t^{(n)}$ and the stationary distribution $\pi_n$ of the $n^{th}$ process. Typical examples include 
\begin{equation*}
        g_n(t)=\sup_{x\in \mathcal{X}}\mathrm{TV}(\delta_x P_{t,n},\pi_n),
\end{equation*}
where we have used the total variation distance to measure the distance between the distributions, see \parencite[Chapter 18]{levin2017markov} and \parencite{ding2010total}. Another common choice is the separation cutoff phenomenon in finite Markov chains with transition matrix $(P(x,y))_{x,y\in\mathcal{X}}$, which utilize
\begin{equation*}
    g_n(t)=\max_{x,y\in\mathcal{X}}\left\{1-\frac{P_{t,n}(x,y)}{\pi_n(y)}\right\},
\end{equation*}
see for example \parencite{diaconis2006Separation}. It turns out that this two choices entail $\mathrm{TV}$-type cutoff and separation-type cutoff, that we shall introduce in Section \ref{sec:reversible}.

One of the main aims of the manuscript is to study cutoff phenomenon under information-theoretic $f$-divergences. To this end, let us now recall its definition:

\begin{definition}
    [Csisz\'ar's $f$-divergence]\
    \label{def:f-divergence}
    Given two probability measures $\nu_1,\nu_2$ on $\mathcal{\mathcal{X}}$ with $\nu_1\ll \nu_2$, for a convex function $f:[0,\infty)\rightarrow \mathbb R$ such that $f(1)=0$, we define the $f$-divergence from $\nu_2$ to $\nu_1$ as
    \begin{equation*}
        D_f\left(\nu_1\|\nu_2\right):=\int_\mathcal{X}f\left(\frac{d\nu_1}{d\nu_2}\right)d\nu_2.
    \end{equation*}
\end{definition}

Many popular divergences belong to the family of $f$-divergences, and we refer to \parencite{sason2016f} and \parencite{van2014renyi} to give a few common examples:
\begin{itemize}
    \item Total variation (TV) distance: $f(t)=\dfrac{|t-1|}{2}$, denoted as $\mathrm{TV}(\nu_1,\nu_2)$.
    
    \item Relative entropy/Kullback-Leibler (KL) divergence: $f(t)=t\ln t-t+1$, denoted as $\mathrm{KL}(\nu_1\|\nu_2)$.

    \item $\chi^2$-divergence: $f(t)=|t-1|^2$, denoted as $\chi^2(\nu_1\|\nu_2)$.

    \item $\chi^p$-divergence ($p>0$): $f(t)=|t-1|^p$, denoted as $\chi^p(\nu_1\|\nu_2)$. When $p=1, 2$, we recover the total variation distance and $\chi^2$-divergence up to a constant.

    \item Jensen-Shannon divergence: $f(t)=t\ln t-(t+1)\ln \dfrac{t+1}{2}$, denoted as $\mathrm{JS}(\nu_1\|\nu_2)$. It also has the property 
    \begin{equation}\label{eq:JS divergence and KL divergence}
        \mathrm{JS}(\nu_1\|\nu_2)=\mathrm{KL}\left(\nu_1\bigg\|\frac{\nu_1+\nu_2}{2}\right)+\mathrm{KL}\left(\nu_2\bigg\|\frac{\nu_1+\nu_2}{2}\right).
    \end{equation}

    \item $\alpha$-divergence ($\alpha\in (0,1)\cup (1,\infty)$): $f(t)=f_\alpha(t)=\dfrac{t^\alpha-\alpha(t-1)-1}{\alpha-1}$, denoted as $D_{\alpha}(\nu_1\|\nu_2)$. A closely related divergence is the R\'enyi divergence defined as 
    \begin{equation}\label{eq:identity between alpha-divergence and Renyi divergence}
        R_\alpha(\nu_1\|\nu_2):=\frac{1}{\alpha-1}\ln (1+(\alpha-1)D_{\alpha}(\nu_1\|\nu_2))=\frac{1}{\alpha-1}\ln \int_\mathcal{X}\left(\frac{d\nu_1}{d\nu_2}\right)^\alpha d\nu_2.
    \end{equation}

    \item Squared Hellinger distance: $f(t)=\left(\sqrt{t}-1\right)^2$, denoted as $\mathrm{Hel}^2\left(\nu_1,\nu_2\right)$. 

    \item Vincze-Le Cam distance: $f(t)=\dfrac{(t-1)^2}{t+1}$, denoted as $\mathrm{LC}\left(\nu_1,\nu_2\right)$. An important relationship with $\chi^2$-divergence is that 
    \begin{equation}\label{eq:Le Cam distance and chi^2 divergence}
        \frac{1}{2}\mathrm{LC}\left(\nu_1,\nu_2\right)=\chi^2\left(\nu_1\bigg\|\frac{1}{2}\nu_1+\frac{1}{2}\nu_2\right)=\chi^2\left(\nu_2\bigg\|\frac{1}{2}\nu_1+\frac{1}{2}\nu_2\right).
    \end{equation}
\end{itemize}

In the following Proposition \ref{prop:properties of f-divergence}, we briefly recall some properties of information divergences in the literature:
\begin{proposition}
    [Some properties of information divergences]
    \label{prop:properties of f-divergence}
    Given two probability measures on $\mathcal{X}$ such that $\nu_1\ll \nu_2$, for a convex function $f:[0,\infty)\rightarrow \mathbb R$ such that $f(1)=0$, then the following properties hold.
    \begin{enumerate}[label=(\roman*)]
        \item\label{item:convex conjugate of f} \parencite[Theorem 5]{sason2016f} Denote $f^*(t):=tf\left(\frac{1}{t}\right)$ as the convex conjugate of $f(t)$, then we have 
        \begin{equation*}
            \sup_{\nu_1\neq \nu_2} \frac{D_f\left(\nu_1\|\nu_2\right)}{\mathrm{TV}(\nu_1,\nu_2)}=f(0)+f^*(0),
        \end{equation*}
        where $f^*(0):=\lim_{u\rightarrow \infty}\frac{f(u)}{u}$, and both $f(0)$ and $f^*(0)$ can be infinity.

        \item\label{item:monotonicity of Renyi divergence} (Monotonicity of $R_\alpha (\nu_1\|\nu_2)$ and $D_{\alpha}(\nu_1\|\nu_2)$ in $\alpha$)
        $R_\alpha (\nu_1\|\nu_2)$ and $D_\alpha(\nu_1\|\nu_2)$ are non-decreasing with respect to $\alpha\in (0,1)\cup (1,\infty)$, see \parencite[Theorem 3, 6]{van2014renyi}, \parencite[Theorem 36]{sason2016f} and \parencite{liese1987convex}. Moreover, we have
        \begin{equation*}
            \mathrm{KL}(\nu_1\|\nu_2)=\lim_{\alpha\nearrow 1}R_\alpha (\nu_1\|\nu_2),
        \end{equation*}
        and we can also write $\mathrm{KL}(\nu_1\|\nu_2)$ as $R_1(\nu_1\|\nu_2)$ to extend to the case of $\alpha=1$. We also have 
        \begin{equation*}
            \mathrm{KL}(\nu_1\|\nu_2)=\lim_{\alpha\nearrow 1}D_\alpha(\nu_1\|\nu_2),
        \end{equation*}
        where the limits in above two equations can be also taken from upperside if $D_\alpha(\nu_1\|\nu_2)<\infty$ for some $\alpha>1$. With the monoticity of R\'enyi divergence, we can also take the limit $\alpha\rightarrow\infty$ to define
        \begin{equation}\label{eq:R_infty}
            R_{\infty}(\nu_1\|\nu_2):=\lim_{\alpha\rightarrow\infty}R_{\alpha}(\nu_1\|\nu_2)=\ln \left(\mathop{\esssup}_{x\in \mathcal{X}}\frac{d\nu_1}{d\nu_2}\right).
        \end{equation}
        
        \item\label{item:Pinsker's inequality} (Pinsker's inequality, \cite{van2014renyi}, Theorem 31) For $\alpha\in (0,1]$, we have
        \begin{equation*}
            2\alpha \mathrm{TV}^2(\nu_1,\nu_2)\leq R_\alpha (\nu_1\|\nu_2).
        \end{equation*}
    \end{enumerate}
\end{proposition}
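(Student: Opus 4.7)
My plan is to address the three parts separately using standard information-theoretic techniques, since each statement is a well-known result from the cited literature.

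For (i), I would establish matching upper and lower bounds on the ratio $D_f/\mathrm{TV}$. For the upper bound, let $r := d\nu_1/d\nu_2$ and exploit convexity of $f$ with $f(1)=0$: the chord slope $f(t)/(t-1)$ is non-decreasing in $t$, which yields $f(t) \leq f(0)(1-t)$ for $t \in [0,1]$ and $f(t) \leq f^*(0)(t-1)$ for $t \geq 1$ (the latter because $\lim_{t \to \infty} f(t)/(t-1) = f^*(0)$). Splitting the integral over $\{r \leq 1\}$ and $\{r > 1\}$ and using $\int_{\{r<1\}}(1-r)\,d\nu_2 = \int_{\{r>1\}}(r-1)\,d\nu_2 = \mathrm{TV}(\nu_1,\nu_2)$ yields $D_f(\nu_1\|\nu_2) \leq (f(0) + f^*(0))\,\mathrm{TV}(\nu_1,\nu_2)$. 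For the reverse inequality, I would construct an extremal sequence of two-point perturbations $\nu_1 = \nu_2 + \epsilon(\mu - \nu_2)$ with $\mu$ placing mass on regions where $r$ is near $0$ or where $r$ is large, and let $\epsilon \to 0$ so that both $f(0)$ and $f^*(0)$ are simultaneously activated asymptotically.

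For (ii), the central tool is the moment function $M(\alpha) := \int_{\mathcal{X}} (d\nu_1/d\nu_2)^\alpha \, d\nu_2$. By H\"older's inequality, $\ln M$ is convex on $(0,\infty)$ with $\ln M(1) = 0$, so the chord slope
\[
    R_\alpha(\nu_1\|\nu_2) = \frac{\ln M(\alpha) - \ln M(1)}{\alpha - 1}
\]
is non-decreasing in $\alpha$ by elementary convexity. The same chord-slope argument applied to the convex function $M$ itself yields monotonicity of $D_\alpha(\nu_1\|\nu_2) = (M(\alpha) - 1)/(\alpha - 1)$. The KL limits at $\alpha \to 1$ then follow from L'H\^opital's rule with $M'(1) = \mathbb{E}_{\nu_2}[r \ln r] = \mathrm{KL}(\nu_1\|\nu_2)$, and the $\alpha \to \infty$ identity is the standard convergence of $L^\alpha$ norms to the essential supremum of the density ratio.

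For (iii), I would reduce to the Bernoulli case via the data processing inequality: for any measurable $B \subset \mathcal{X}$, the pushforward under $\mathbf{1}_B$ satisfies $R_\alpha\bigl(\mathrm{Ber}(\nu_1(B)) \,\big\|\, \mathrm{Ber}(\nu_2(B))\bigr) \leq R_\alpha(\nu_1\|\nu_2)$, and the supremum $\mathrm{TV}(\nu_1,\nu_2) = \sup_B |\nu_1(B) - \nu_2(B)|$ is attained at $B = \{d\nu_1/d\nu_2 > 1\}$. This reduces the claim to the scalar inequality $2\alpha(p-q)^2 \leq R_\alpha(\mathrm{Ber}(p)\|\mathrm{Ber}(q))$, which can be verified by a Taylor expansion of a single-variable function along the lines of the classical Pinsker proof. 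The main obstacle is part (i): one has to treat the degenerate cases $f(0) = \infty$ or $f^*(0) = \infty$ with a truncation argument, and the extremal sequence must balance the perturbation inside and outside $\mathrm{supp}(\nu_2)$ against $\mathrm{TV}$ so as to saturate both boundary asymptotics simultaneously; parts (ii) and (iii) are essentially routine once recast as convexity of $\ln M$ and a two-point reduction respectively.
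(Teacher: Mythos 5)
The paper does not prove this proposition: it is stated as a recalled collection of known facts with explicit citations (Sason--Verd\'u Theorem~5 for item~(i), van Erven--Harremo\"es Theorems~3, 6, 31 and Liese--Vajda for items~(ii)--(iii)), so there is no paper proof for comparison. Your sketch essentially reconstructs the standard arguments from those sources: the chord-slope inequalities $f(t)\le f(0)(1-t)$ on $[0,1]$ and $f(t)\le f^*(0)(t-1)$ on $[1,\infty)$ for (i), the log-convexity of the moment function $M(\alpha)=\int r^\alpha\,d\nu_2$ via H\"older for (ii) together with L'H\^opital at $\alpha=1$ and $L^\alpha\to L^\infty$ convergence at $\alpha\to\infty$, and the two-point data-processing reduction for (iii). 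All of these are correct in outline.

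One small slip in (i): you suggest the extremal sequence must ``balance the perturbation inside and outside $\mathrm{supp}(\nu_2)$,'' but the standing hypothesis $\nu_1\ll\nu_2$ rules out $\nu_1$ carrying any mass outside $\mathrm{supp}(\nu_2)$. The correct construction keeps everything inside the support: take a small $\nu_2$-mass set on which $d\nu_1/d\nu_2$ is forced to zero (contributing $f(0)$ per unit of TV) and another small $\nu_2$-mass set on which $d\nu_1/d\nu_2$ is sent to infinity (contributing $f^*(0)$ per unit of TV, since $\delta f(1/\delta)\to f^*(0)$), tuning the two $\nu_2$-masses so that each asymptotically accounts for half the total variation. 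With that correction, and a truncation when $f(0)$ or $f^*(0)$ is infinite, the sketch is sound.
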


\subsection{$L^p$-cutoff}\label{sec:L^p cutoff}
The $f$-divergence family is a rich class of information divergences with elegant mathematical properties, and it naturally suggests that there are many potential choices for $g_n(t)$ to study cutoff phenomenon. One popular choice in the literature centers around the $L^p$-cutoff, which utilize the following divergence
\begin{equation}\label{eq:L^p divergence}
    d_p(x,t):=\left(\int_\mathcal{X}\left|\frac{d\delta_x P_t}{d\pi}-1\right|^p d\pi\right)^{\frac{1}{p}}=\|h(t,x,\cdot)-1\|_p, \quad p\geq 1,
\end{equation}
where $h(t,x,y)$ is the probability density function of $\delta_xP_t$ with respect to $\pi$. Taking supremum over $x\in \mathcal{X}$, we define
\begin{equation*}
    \overline d_p(t):=\sup_{x\in \mathcal{X}}d_p(x,t), \quad \widetilde d_p(t):=\pi\text{-}\mathop{\esssup}\limits_{x\in\mathcal{X}}d_p(x,t),
\end{equation*}
and take $g_n(t)=\overline d_{p,n}(t)$ or $g_n(t)=\widetilde d_{p,n}(t)$, where the $n$ in subscripts refer to the $n^{th}$ process. In particular when we take $p=1$, it recovers the total variation distance up to a constant. For the adjoint operator $P_t^*$ of $P_t$, we write
\begin{equation*}
        d_p^*(x,t):=\left(\int_\mathcal{X}\left|\frac{d\delta_x P_t^*}{d\pi}-1\right|^p d\pi\right)^{\frac{1}{p}},
\end{equation*}
and similarly
\begin{equation*}
    \overline d_p^*(t):=\sup_{x\in \mathcal{X}}d_p^*(x,t), \quad \widetilde d_p^*(t):=\pi\text{-}\mathop{\esssup}\limits_{x\in\mathcal{X}}d_p^*(x,t).
\end{equation*}

In most problems with mild conditions, the supremum and essential supremum defined above are the same, hence in this article we will focus on the latter one which we name as the ``worst-case" divergence. For $\varepsilon>0$, the worst-case $L^p$-mixing times are defined as
\begin{equation}
    \widetilde t_p(\varepsilon):=\inf\left\{t\in T:\widetilde d_p(t)\leq \varepsilon\right\}, \quad \widetilde t_p^*(\varepsilon):=\inf\left\{t\in T:\widetilde d_p^*(t)\leq \varepsilon\right\}\label{eq:definition of L^p mixing time}
\end{equation}

One of the reasons for the popularity of $L^p$-cutoff in the literature is that many useful tools can be applied regarding the space $L^p(\mathcal{X},\pi)$. We summarize some results from \parencite[Section 3.2, 3.3, 5.2, 5.3]{chen2008cutoff}, \parencite{dunford1988linear}, \parencite{stein2011functional} and \parencite{bernard2013interpolation} into Proposition \ref{prop:Important properties of L^p} and \ref{prop:Riesz-Thorin theorem}, which may be used in the rest of the paper. Here in the subscripts of norms, we use the shorthand $L^p$ to denote $L^p(\mathcal{X},\pi)$.
\begin{proposition}
    [Some properties of $L^p(\mathcal{X},\pi)$ and $d_p(x,t)$, \cite{chen2008cutoff}, \cite{dunford1988linear}]
    \label{prop:Important properties of L^p}
    Given a Markov process $\{X_t\}_{t\in T}$ with semigroup $P_t$, stationary distribution $\pi$ and $h(t,x,\cdot)$ defined before,
    for $p\in [1,\infty]$, let $q$ be the conjugate of $p$, i.e. $\frac{1}{p}+\frac{1}{q}=1$, then the following statements hold.
    \begin{enumerate}[label=(\roman*)]
        \item\label{item:L^p properties 1} Given a function $f\in L^p(\mathcal{X},\pi)$, its $L^p$-norm satisfies
        \begin{equation*}
            \|f\|_p=\sup \left\{\langle f,g\rangle_\pi: g\in L^q(\mathcal{X},\pi), \|g\|_q\leq 1\right\},
        \end{equation*}
        applied on $h(t,x,\cdot)$, we have
        \begin{equation*}
            d_p(x,t)=\sup \left\{(\delta_xP_t-\Pi)(g):g\in L^q(\mathcal{X},\pi), \|g\|_q \leq 1\right\}.
        \end{equation*}

        \item\label{item:L^p properties 2} The mapping $t\mapsto \widetilde d_p(t)$ is non-increasing and sub-multiplicative.

        \item\label{item:L^p properties 3} $\widetilde d_p(t)$ can be interpreted as operator norms, i.e.
        \begin{equation*}
            \widetilde d_p(t)=\left\|P_t-\Pi\right\|_{L^q\rightarrow L^\infty},
        \end{equation*}
        Similarly, we have
        \begin{equation*}
            \widetilde d_p^*(t)=\|P_t^*\|_{L^q\rightarrow L^\infty}=\|P_t\|_{L^1\rightarrow L^p}.
        \end{equation*}
    \end{enumerate}
\end{proposition}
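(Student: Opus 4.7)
The plan is to reduce all three parts to standard $L^p$-duality combined with the semigroup structure of $P_t$.

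For (i), the identity $\|f\|_p = \sup\{\langle f,g\rangle_\pi : \|g\|_q \leq 1\}$ is the classical statement that $L^q(\mathcal{X},\pi)$ is the continuous dual of $L^p(\mathcal{X},\pi)$ for $1 \leq p < \infty$ (H\"older gives the $\leq$ direction, and equality is attained by $g \propto |f|^{p-1}\mathrm{sgn}(f)/\|f\|_p^{p-1}$; the case $p = \infty$ follows by a limiting argument). Taking $f = h(t,x,\cdot) - 1$ and expanding
\[
\langle h(t,x,\cdot) - 1, g\rangle_\pi = \int g\,d(\delta_x P_t) - \pi(g) = (\delta_x P_t - \Pi)(g)
\]
then yields the second identity.

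For (iii), I would combine (i) with the interchange of $\pi\text{-}\esssup_x$ and $\sup_{\|g\|_q\leq 1}$: for each fixed $g$, $(P_t-\Pi)g$ is a single function on $\mathcal{X}$ whose $\pi$-essential supremum equals $\|(P_t-\Pi)g\|_\infty$, so
\[
\widetilde d_p(t) = \sup_{\|g\|_q\leq 1}\|(P_t-\Pi)g\|_\infty = \|P_t-\Pi\|_{L^q\to L^\infty}.
\]
The same argument applied to $P_t^*$ yields $\widetilde d_p^*(t) = \|P_t^*-\Pi\|_{L^q\to L^\infty}$, which matches the stated $\|P_t^*\|_{L^q\to L^\infty}$ under the standard convention of restricting the operator norm to the mean-zero subspace $\{g \in L^q : \pi(g) = 0\}$, on which $\Pi$ vanishes and constants play no role. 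The remaining equality $\|P_t^*-\Pi\|_{L^q\to L^\infty} = \|P_t-\Pi\|_{L^1\to L^p}$ is the standard adjoint identity $\|A\|_{X\to Y} = \|A^*\|_{Y^*\to X^*}$ with $(L^q)^* = L^p$ and the canonical H\"older pairing between $L^1$ and $L^\infty$.

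For (ii), non-increase follows from the identity $h(s+t,x,\cdot) = P_t^* h(s,x,\cdot)$, which I would verify by testing against $\mathbf{1}_A$ and using $\delta_xP_{s+t}(A) = \langle P_t\mathbf{1}_A, h(s,x,\cdot)\rangle_\pi = \langle \mathbf{1}_A, P_t^*h(s,x,\cdot)\rangle_\pi$. Combined with $P_t^*\mathbf{1} = \mathbf{1}$ (by stationarity of $\pi$), this gives $h(s+t,x,\cdot) - 1 = P_t^*(h(s,x,\cdot)-1)$, and since $P_t^*$ is a Markov operator hence contractive on $L^p(\mathcal{X},\pi)$, taking $L^p$-norms then essup in $x$ yields monotonicity. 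For sub-multiplicativity, I would combine (iii) with the algebraic identity $P_{s+t}-\Pi = (P_s-\Pi)(P_t-\Pi)$ (from $\Pi P_t = \Pi = P_s\Pi$) and the H\"older pairing
\[
(P_{s+t}-\Pi)g(x) = \langle(P_t-\Pi)g,\,h(s,x,\cdot)-1\rangle_\pi,
\]
valid since $\pi((P_t-\Pi)g) = 0$, and then apply Riesz-Thorin interpolation (Proposition~\ref{prop:Riesz-Thorin theorem}) to reassemble the mixed $p$-$q$ exponents into a clean product bound $\widetilde d_p(s)\widetilde d_p(t)$. The main obstacle is precisely this last interpolation step, where H\"older naturally produces a mixed-exponent bound that must be converted to a pure $\widetilde d_p$ product, following the approach of \parencite[Section 3.2]{chen2008cutoff}.
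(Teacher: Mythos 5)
The paper does not actually prove this proposition; it is presented as a citation to \parencite[Sections 3.2, 3.3, 5.2, 5.3]{chen2008cutoff} and \parencite{dunford1988linear}, so there is no internal proof to compare against. Your reconstruction is broadly the standard one, and you correctly flag the notational slip in item (iii) (the paper writes $\|P_t^*\|_{L^q\to L^\infty}$ and $\|P_t\|_{L^1\to L^p}$ where $\|P_t^*-\Pi\|$ and $\|P_t-\Pi\|$ are meant, or equivalently restriction to the mean-zero subspace). The identity $h(s+t,x,\cdot)=P_t^*h(s,x,\cdot)$ and the contraction $\|P_t^*\|_{L^p\to L^p}\le 1$ give non-increase cleanly, and the adjoint identity $\|P_t-\Pi\|_{L^1\to L^p}=\|P_t^*-\Pi\|_{L^q\to L^\infty}$ is the correct way to get the second display in (iii).

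There are two points worth tightening. First, in (iii) your ``so'' hides the only nontrivial step: you need
\[
\pi\text{-}\esssup_x\ \sup_{\|g\|_q\le 1}(\delta_xP_t-\Pi)(g)\ \le\ \sup_{\|g\|_q\le 1}\ \pi\text{-}\esssup_x\ |(P_t-\Pi)g(x)|,
\]
since the reverse inequality is immediate. The standard justification runs through separability of $L^q(\mathcal{X},\pi)$: take a countable dense set $\{g_i\}$ of the unit ball, note $d_p(x,t)=\sup_i(\delta_xP_t-\Pi)(g_i)$, and since each bound $(\delta_xP_t-\Pi)(g_i)\le\|(P_t-\Pi)g_i\|_\infty$ holds off a $\pi$-null set, the countable union of exceptional sets is still null. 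This is exactly the step that distinguishes $\widetilde d_p$ from $\overline d_p$ in \parencite{chen2008cutoff}, so it is worth making explicit. Second, your perceived ``main obstacle'' in the sub-multiplicativity argument is not actually there: once you have (iii) and the pairing $(P_{s+t}-\Pi)g(x)=\langle(P_t-\Pi)g,\,h(s,x,\cdot)-1\rangle_\pi$, H\"older gives $|(P_{s+t}-\Pi)g(x)|\le d_p(x,s)\,\|(P_t-\Pi)g\|_q$, and because $\pi$ is a probability measure you simply have $\|(P_t-\Pi)g\|_q\le\|(P_t-\Pi)g\|_\infty\le\widetilde d_p(t)\|g\|_q$. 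No Riesz--Thorin interpolation is needed; invoking it would only introduce spurious constants.
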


\begin{proposition}
    [Riesz-Thorin Interpolation Theorem, \cite{stein2011functional}, \cite{bernard2013interpolation}]
    \label{prop:Riesz-Thorin theorem}
    
    Consider a linear operator $\mathcal{A}: L^{p_0}(\mathcal{X},\mu)\cup L^{p_1}(\mathcal{X},\mu)\rightarrow L^{q_0}(\mathcal{X},\nu)\cup L^{q_1}(\mathcal{X},\nu)$, where $p_0,p_1,q_0,q_1\in [1,\infty]$, and $\nu$ is semifinite. If there exists $M_0,M_1>0$ such that 
    \begin{align*}
        \|\mathcal{A}f\|_{q_0}&\leq M_0\|f\|_{p_0}, \quad \forall f\in L^{p_0}(\mathcal{X},\mu),\\
        \|\mathcal{A}f\|_{q_1}&\leq M_1\|f\|_{p_1}, \quad \forall f\in L^{p_1}(\mathcal{X},\mu),
    \end{align*}
    then for any $t\in (0,1)$ and $\dfrac{1}{p}=\dfrac{1-t}{p_0}+\dfrac{t}{p_1}$, $\dfrac{1}{q}=\dfrac{1-t}{q_0}+\dfrac{t}{q_1}$, we have 
    \begin{equation*}
        \|\mathcal{A}f\|_q\leq M_0^{1-t}M_1^t\|f\|_p, \quad \forall f\in L^p(\mathcal{X},\mu).
    \end{equation*}
\end{proposition}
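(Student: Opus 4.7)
The plan is to prove this by complex interpolation, specifically via the Hadamard three-lines theorem applied to a carefully chosen analytic family of functions. First I would reduce to the case where $f = \sum_k a_k \mathbf{1}_{A_k}$ is a simple function supported on disjoint sets $A_k$ of finite $\mu$-measure; by density this suffices to conclude. Using duality together with the semifiniteness of $\nu$, the norm $\|\mathcal{A}f\|_q$ equals the supremum of $\langle \mathcal{A}f, g\rangle_\nu$ over simple functions $g = \sum_j b_j \mathbf{1}_{B_j}$ with $\|g\|_{q'} \leq 1$, where $q'$ is the conjugate exponent of $q$.

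Normalizing so that $\|f\|_p = \|g\|_{q'} = 1$, for $z$ in the closed strip $S = \{z \in \mathbb{C} : 0 \leq \mathrm{Re}(z) \leq 1\}$ define
\begin{equation*}
\alpha(z) = \frac{1-z}{p_0} + \frac{z}{p_1}, \qquad \beta(z) = \frac{1-z}{q_0'} + \frac{z}{q_1'},
\end{equation*}
so that $\alpha(t) = 1/p$ and $\beta(t) = 1/q'$, and introduce the analytic families
\begin{equation*}
f_z = \sum_k |a_k|^{p\,\alpha(z)} \mathrm{sgn}(a_k) \mathbf{1}_{A_k}, \qquad g_z = \sum_j |b_j|^{q'\beta(z)} \mathrm{sgn}(b_j) \mathbf{1}_{B_j},
\end{equation*}
interpreted so that $f_t = f$ and $g_t = g$ (terms with $a_k = 0$ or $b_j = 0$ are simply dropped).

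Next I would consider $F(z) = \int_\mathcal{X} (\mathcal{A} f_z)\, g_z \, d\nu$, which is entire in $z$ as a finite linear combination of exponentials and bounded on $S$ since $p\alpha(z)$ and $q'\beta(z)$ have bounded real parts there. A direct computation on the left boundary $\mathrm{Re}(z) = 0$ gives $\|f_{iy}\|_{p_0}^{p_0} = \sum_k |a_k|^p \mu(A_k) = 1$ and $\|g_{iy}\|_{q_0'} = 1$, whence the hypothesis at $(p_0, q_0)$ together with H\"older's inequality yields $|F(iy)| \leq M_0$; analogously $|F(1 + iy)| \leq M_1$ on the right boundary. The Hadamard three-lines theorem then delivers $|F(t)| \leq M_0^{1-t} M_1^t$, and since $F(t) = \langle \mathcal{A}f, g \rangle_\nu$, taking the supremum over admissible simple $g$ concludes $\|\mathcal{A}f\|_q \leq M_0^{1-t} M_1^t = M_0^{1-t} M_1^t \|f\|_p$.

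The hard part will be the bookkeeping for endpoint exponents: when some $p_i$ or $q_i$ equals $1$ or $\infty$, the factor $|a_k|^{p\alpha(z)}$ no longer interpolates the correct norms and must be replaced by a $z$-independent scaling that preserves the relevant $L^\infty$ or $L^1$ bound; similarly the reduction to simple functions needs care when $p = \infty$, where one truncates and approximates in a weaker topology rather than in norm. Once these cases are handled, the analytic structure of $F$ on $S$ and the three-lines conclusion go through unchanged, yielding the claimed bound $\|\mathcal{A}f\|_q \leq M_0^{1-t} M_1^t \|f\|_p$ for all $f \in L^p(\mathcal{X}, \mu)$.
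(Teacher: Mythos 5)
Your proof is a correct rendition of the standard Riesz--Thorin argument via the Hadamard three-lines theorem, and you correctly identify where the endpoint cases $p_i, q_i \in \{1,\infty\}$ need $z$-independent modifications. Note, however, that the paper does not prove this proposition at all: it is stated as a cited classical result (from Stein--Shakarchi and Bernard), so there is no internal proof to compare against. The route you take is precisely the proof given in those references, so nothing genuinely differs; the one detail worth making explicit before calling it done is the point you flag in passing about boundedness of $F$ on the strip: it is the boundedness of $\mathrm{Re}\bigl(p\,\alpha(z)\bigr)$ and $\mathrm{Re}\bigl(q'\beta(z)\bigr)$ (not of $p\,\alpha(z)$ itself, which is unbounded in $\mathrm{Im}\,z$) that keeps $|f_z|$ and $|g_z|$ pointwise bounded, hence $F$ bounded, and you should also note that the semifiniteness of $\nu$ is exactly what lets the duality $\|h\|_q = \sup_{\|g\|_{q'}\le 1}|\langle h,g\rangle_\nu|$ hold at $q=\infty$.
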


The next result offers a characterization of $L^p$-cutoff for \textbf{reversible} Markov processes, which states that in such divergences, cutoff phenomenon occurs if and only if the associated product condition holds, that is, spectral gap multiplied by the $L^p$-mixing time tends to infinity.
\begin{proposition}
    [Characterization of $L^p$-cutoff, \cite{chen2008cutoff}, Theorem 5.3, 5.4]
    \label{prop:characterization of L^p cutoff}
    Consider a sequence of Markov processes $\{X_t^{(n)},t\in T\}_{n=1}^\infty$ with state space $\mathcal{X}_n$, stationary distribution $\pi_n$, spectral gap $\lambda_n\geq 0$, second largest singular value $0<\kappa_n\leq 1$ and semigroup $P_{t,n}$, where $P_{t,n}$ is \textbf{reversible} on $L^2(\mathcal{X}_n,\pi_n)$ for each $n\geq 1$. Let $g_n(t):=\widetilde d_{p,n}(t)$ and assume $\lim_{t\rightarrow \infty}g_n(t)=0$ for each $n$, if $T=[0,\infty)$, then the following statements are equivalent:
    \begin{enumerate}[label=(A\arabic*)]
        \item\label{item:A1} There exists some $p\in (1,\infty]$ and some $\varepsilon>0$ such that $\lambda_n\widetilde t_{p,n}(\varepsilon)$ tends to infinity.

        \item\label{item:A2} For any $p\in (1,\infty]$ and any $\varepsilon>0$, $\lambda_n\widetilde t_{p,n}(\varepsilon)$ tends to infinity.

        \item\label{item:A3} There exists some $p\in (1,\infty]$ such that precutoff occurs.

        \item\label{item:A4} For any $p\in (1,\infty]$, cutoff occurs.

        \item\label{item:A5} For any $p\in (1,\infty]$ and any $\varepsilon>0$, there is a $\left(\widetilde t_{p,n}(\varepsilon),\lambda_n^{-1}\right)$ cutoff.
    \end{enumerate}
    Here the $n$ in subscripts refer to the $n^{th}$ process. 
    
    Furthermore, if $T=\mathbb N$, assume for some $p\in (1,\infty]$ and $\varepsilon>0$, $\lim_{n\rightarrow \infty}\widetilde t_{p,n}(\varepsilon)=\infty$, and we substitute $\lambda_n'=\min \left\{1, -\ln \kappa_n\right\}$ into $\lambda_n$ in the items, then the items above are also equivalent. If we further assume $\lambda_n\rightarrow 0$ and that the Markov chains are lazy, i.e.
    \begin{equation}\label{eq:1/2 lazy chains}
        p_n(x,\{x\})\geq \frac{1}{2}, \quad \forall \hspace{0.1em} n\geq 1,\enspace x\in \mathcal{X}_n,
    \end{equation}
    where $p_n(x,\cdot)$ be the one-step transition probability of the $n^{th}$ chain. Then, we can also take $\lambda_n'=\min \left\{1, \lambda_n\right\}$.
\end{proposition}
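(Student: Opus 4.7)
My plan is to leverage the exponential $L^p$-contraction afforded by the $L^2$-spectral gap under reversibility, combined with Riesz--Thorin interpolation to transfer between $L^p$-spaces, and to conclude via the cutoff characterization in Proposition \ref{prop:cutoff and mixing time}. The implications $(A5) \Rightarrow (A4) \Rightarrow (A3)$ and $(A2) \Rightarrow (A1)$ are immediate from the definitions, and $(A5) \Rightarrow (A2)$ follows because $w_n = o(t_n)$ in a $\bigl(\widetilde t_{p,n}(\varepsilon), \lambda_n^{-1}\bigr)$-cutoff forces $\lambda_n \widetilde t_{p,n}(\varepsilon) \to \infty$. So the substantive work is closing the loop via $(A1) \Rightarrow (A5)$ together with $(A3) \Rightarrow (A1)$.

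The central lemma I would establish is an exponential $L^p$-contraction: for each $p \in (1,\infty]$ there exist $c_p, C_p > 0$ with
\begin{equation*}
   \widetilde d_{p,n}(t+s) \leq C_p\, e^{-c_p \lambda_n s}\, \widetilde d_{p,n}(t), \qquad s, t \geq 0.
\end{equation*}
For $p \in (1,\infty)$ I would factor $P_{t+s} - \Pi = (P_t - \Pi)(P_s - \Pi)$, use the operator-norm identity $\widetilde d_{p,n}(t) = \|P_t - \Pi\|_{L^q \to L^\infty}$ from Proposition \ref{prop:Important properties of L^p}, and bound $\|P_s - \Pi\|_{L^q \to L^q}$ by Riesz--Thorin interpolation (Proposition \ref{prop:Riesz-Thorin theorem}) between the trivial bound $\|P_s - \Pi\|_{L^r \to L^r} \leq 2$ at $r \in \{1,\infty\}$ and the reversible spectral identity $\|P_s - \Pi\|_{L^2 \to L^2} = e^{-\lambda_n s}$ from Proposition \ref{prop: convergence rate of Markov semigroup}. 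For the endpoint $p = \infty$ I use instead the symmetric decomposition $P_{t+s} - \Pi = (P_{t/2} - \Pi)(P_s - \Pi)(P_{t/2} - \Pi)$ together with the reversible identity $\widetilde d_{\infty,n}(t) = \widetilde d_{2,n}(t/2)^2$ and $\widetilde d_{2,n}^* = \widetilde d_{2,n}$. With the contraction in hand, $(A1) \Rightarrow (A5)$ proceeds by first transferring the product condition across all $(p,\varepsilon)$ using the $L^p$-mixing-time comparisons from \cite{chen2008cutoff} (reversibility eliminates the adjoint-mixing-time terms), and then plugging $u = \widetilde t_{p,n}(\varepsilon)$, $v = c\lambda_n^{-1}$ into the contraction to obtain $\widetilde d_{p,n}(u+v) \leq C_p e^{-c_p c}\varepsilon \to 0$ as $c \to \infty$; reversing the contraction gives a matching lower bound that blows up as $c \to -\infty$, securing the $\bigl(\widetilde t_{p,n}(\varepsilon), \lambda_n^{-1}\bigr)$-cutoff.

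For $(A3) \Rightarrow (A1)$, I use the spectral lower bound $\widetilde d_{2,n}(t) \geq e^{-\lambda_n t}$, obtained by pairing $P_t^*(h_0 - 1)$ with an $L^2(\pi_n)$-eigenfunction of $P_t$ at eigenvalue $e^{-\lambda_n t}$ and optimizing over the starting state; propagated to general $p \in (1,\infty]$ via the same $L^p$-mixing-time comparisons, this forces $\widetilde t_{p,n}(\eta) \gtrsim \lambda_n^{-1}\log(1/\eta)$. If precutoff held while $\lambda_n \widetilde t_{p,n}(\varepsilon) \leq K$ along a subsequence, the ratio $\widetilde t_{p,n}(\eta)/\widetilde t_{p,n}(\varepsilon) \gtrsim K^{-1}\log(1/\eta) \to \infty$ as $\eta \downarrow 0$, contradicting the bounded-ratio characterization of precutoff in Proposition \ref{prop:cutoff and mixing time}. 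The discrete-time extension then proceeds by passing to the continuized chain \eqref{eq:continuized chain}, which shares $\pi_n$ and spectral gap with the original; the substitution $\lambda_n' = \min\{1, -\ln\kappa_n\}$ installs the correct exponential rate via the singular value $\kappa_n$, and under $\tfrac12$-laziness the spectrum of $P_n$ is non-negative so that $\kappa_n = 1 - \lambda_n$, giving $-\ln\kappa_n \sim \lambda_n$ when $\lambda_n \to 0$ and justifying the simpler $\lambda_n' = \min\{1,\lambda_n\}$. The main technical hurdle I anticipate is keeping the Riesz--Thorin constants under control so that the contraction rate remains a positive multiple of $\lambda_n$ uniformly in $p$, together with propagating the $p=2$ spectral lower bound to general $p$; both dissolve once only qualitative divergence to $\infty$ (rather than sharp constants) is needed, which is precisely what the cutoff statement asks for.
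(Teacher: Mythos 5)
Your proposal is correct and follows essentially the same route as the cited Chen--Saloff-Coste proof, which this paper reproduces and generalizes in proving Theorem~\ref{thm:f divergence cutoff F_pq}: an exponential $L^p$-contraction obtained from Riesz--Thorin interpolation between the trivial $L^1,L^\infty$ operator bounds and the reversible $L^2$-spectral identity, a matching exponential lower bound on $\widetilde d_{p,n}$, and the $L^p$-mixing-time comparison to transfer the product condition across $p$. The one cosmetic deviation is in $(A3)\Rightarrow(A1)$: the paper gets the lower bound $\widetilde d_{p,n}(t)\gtrsim e^{-c\lambda_n t}$ from the operator-norm identity $\widetilde d_{p,n}(t)=\|P_{t,n}-\Pi_n\|_{L^{p'}\to L^\infty}\geq \|P_{t,n}-\Pi_n\|_{L^{p'}\to L^{p'}}$ together with Riesz--Thorin against $\|P_{t,n}-\Pi_n\|_{L^2\to L^2}=e^{-\lambda_n t}$, which is slightly more robust than your eigenfunction-pairing argument since it does not presuppose that the spectral gap is attained as an eigenvalue.
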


\section{Reversible cases}\label{sec:reversible}
In this section, under a reversible setting, we extend Proposition \ref{prop:characterization of L^p cutoff} from $L^p$-mixing times to other mixing times induced by general $f$-divergences satisfying some mild conditions. Moreover, we uncover new relationships between cutoff under different divergences by relating their cutoff time and window, and develop a classification scheme among these divergences based on equivalence in characterization of cutoff phenomenon.

\subsection{$\mathcal{F}_{p,q}$ family and R\'enyi divergence with $\alpha\in [2,\infty]$}\label{sec:F_p,q family}

We begin this subsection by introducing a family of convex functions that we call the $\mathcal{F}_{p,q}$ family, which generates a few divergences, for instance the $\alpha$-divergence with $\alpha\in [2,\infty)$. The objective of this subsection is to prove that cutoff phenomenon are equivalent among members of the $\mathcal{F}_{p,q}$ family and to give a product condition for cutoff to occur. We then extend these results to R\'enyi divergence with $\alpha\in [2,\infty]$.

\begin{definition}
    [$\mathcal{F}_{p,q}$ family]
    Let $1<p\leq q<\infty$, we define
    \begin{align*}
        \mathcal{F}_{p,q}:=\bigg\{\textrm{convex }f : \mathbb{R}_+ \to \mathbb{R}, f(1) = 0:\enspace &\exists\, m,M>0 \enspace s.t.\enspace \forall x\in [0,\infty), \\
        &m\left(|x-1|^p+|x-1|^q\right)\leq f(x)\leq M\left(|x-1|^p+|x-1|^q\right)\bigg\}.
    \end{align*}
\end{definition}
\begin{example}\label{eg:alpha-divergence}
    For $\alpha$-divergence with $\alpha\in [2,\infty)$, the generator $f_\alpha (t)=\dfrac{t^\alpha-\alpha(t-1)-1}{\alpha-1}$ satisfies
    \begin{gather*}
        \lim_{t\rightarrow 1}\frac{f_\alpha(t)}{|t-1|^2+|t-1|^\alpha}=\frac{\alpha}{2}, \quad \lim_{t\rightarrow \infty}\frac{f_\alpha(t)}{|t-1|^2+|t-1|^\alpha}=\frac{1}{\alpha-1},\\
        \lim_{t\rightarrow 0}\frac{f_\alpha(t)}{|t-1|^2+|t-1|^\alpha}=\frac{1}{2},
    \end{gather*}
    which implies $f_\alpha\in \mathcal{F}_{2,\alpha}$. Another example is that if $f$ is strongly convex with $f(1)=f'(1)=0$ and $f''(t)$ is bounded on $[0,\infty)$, then $f\in \mathcal{F}_{2,2}$. However, for $1<\alpha<2$, the $\alpha$-divergence may \textbf{not} belong to any $\mathcal{F}_{p,q}$ family.
\end{example}

Analogous to the notations in Section \ref{sec:L^p cutoff}, for a Markov process $\{X_t\}_{t\in T}$ on state space $\mathcal{X}$ with semigroup $P_t$ and stationary distribution $\pi$, we define 
\begin{equation}
    d_f(x,t):=D_f(\delta_xP_t\|\pi),\quad
    \widetilde d_f(t):=\pi\text{-}\mathop{\esssup}\limits_{x\in\mathcal{X}}d_f(x,t),\label{eq:definition of worst-case f divergence}
\end{equation}
and the $f$-divergence mixing times 
\begin{equation}\label{eq:definition of worst-case f divergence mixing time}
    \widetilde t_f(\varepsilon):=\inf\left\{t\in T:\widetilde d_f(t)\leq \varepsilon\right\}.
\end{equation}
In the following result, for $f\in \mathcal{F}_{p,q}$, we give several equivalent criteria for the occurrence of $f$-divergence cutoff. Moreover, we will use $n$ in the subscripts to denote the $n^{th}$ process.

\begin{theorem}
    [Characterization of $f$-divergence cutoff for $\mathcal{F}_{p,q}, 1<p\leq q<\infty$]
    \label{thm:f divergence cutoff F_pq}
    Consider a sequence of Markov processes $\{X_t^{(n)},t\in T\}_{n=1}^\infty$ with state space $\mathcal{X}_n$, stationary distribution $\pi_n$, spectral gap $\lambda_n\geq 0$, second largest singular value $0<\kappa_n\leq 1$ and semigroup $P_{t,n}$, where $P_{t,n}$ is \textbf{reversible} on $L^2(\mathcal{X}_n,\pi_n)$ for each $n\geq 1$. Let $g_n(t):=\widetilde d_{f,n}(t)$ and assume $\lim_{t\rightarrow \infty}g_n(t)=0$ for each $n$. If $T=[0,\infty)$, then the following statements are equivalent: 
    \begin{enumerate}[label=(B\arabic*)]
        \item\label{item:B1} There exists some $1<p\leq q<\infty$, some $\varepsilon>0$ and some $f\in \mathcal{F}_{p,q}$ such that $\lambda_n\widetilde t_{f,n}(\varepsilon)$ tends to infinity.

        \item\label{item:B2} For any $1< p\leq q<\infty$, any $\varepsilon>0$ and any $f\in \mathcal{F}_{p,q}$, $\lambda_n\widetilde t_{f,n}(\varepsilon)$ tends to infinity.

        \item\label{item:B3} For any $1< p\leq q<\infty$ and any $f\in \mathcal{F}_{p,q}$, precutoff occurs.

        \item\label{item:B4} For any $1< p\leq q<\infty$ and any $f\in \mathcal{F}_{p,q}$, cutoff occurs.

        \item\label{item:B5} For any $1< p\leq q<\infty$, any $\varepsilon>0$ and any $f\in \mathcal{F}_{p,q}$, there is a $(\widetilde t_{f,n}(\varepsilon), \lambda_n^{-1})$ cutoff.
    \end{enumerate}
    Moreover, items \ref{item:B1} to \ref{item:B5} are all equivalent to items \ref{item:A1} to \ref{item:A5}. 
    
    For $T=\mathbb N$, assume for some $1< p\leq q<\infty$, some $\varepsilon>0$ and some $f\in \mathcal{F}_{p,q}$, $\lim_{n\rightarrow \infty}\widetilde t_{p,n}(\varepsilon)=\infty$. If we substitute $\lambda_n'=\min \{1, -\ln \kappa_n\}$ into $\lambda_n$ in the items, then the statements above also hold. Besides, if $\lambda_n\rightarrow 0$ and the chains are lazy, we can also take $\lambda_n'=\min \left\{1, \lambda_n\right\}$.
\end{theorem}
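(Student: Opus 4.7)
The plan is to reduce Theorem \ref{thm:f divergence cutoff F_pq} to the already-established characterization of $L^p$-cutoff in Proposition \ref{prop:characterization of L^p cutoff}, via the two-sided inequality that \emph{defines} the family $\mathcal{F}_{p,q}$. For $f \in \mathcal{F}_{p,q}$ with constants $m, M > 0$, integrating the pointwise envelope of $f$ against the density $h_{x,t} := d\delta_x P_{t,n}/d\pi_n$ yields
\begin{equation*}
    m\bigl(d_{p,n}(x,t)^p + d_{q,n}(x,t)^q\bigr) \,\le\, d_{f,n}(x,t) \,\le\, M\bigl(d_{p,n}(x,t)^p + d_{q,n}(x,t)^q\bigr)
\end{equation*}
for every $x \in \mathcal{X}_n$ and $t \in T$. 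Taking $\pi_n$-essential supremum in $x$ and invoking the Jensen bound $\widetilde d_{p,n}(t) \le \widetilde d_{q,n}(t)$ (valid since $p \le q$ and densities integrate to $1$), I would extract the clean sandwich
\begin{equation*}
    m\,\widetilde d_{q,n}(t)^{q} \,\le\, \widetilde d_{f,n}(t) \,\le\, 2M\,\widetilde d_{q,n}(t)^{p},
\end{equation*}
which holds in the eventually reached regime $\widetilde d_{q,n}(t) \le 1$.

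Inverting this at the level of mixing times gives, for all sufficiently small $\varepsilon > 0$,
\begin{equation*}
    \widetilde t_{q,n}\bigl((\varepsilon/m)^{1/q}\bigr) \,\le\, \widetilde t_{f,n}(\varepsilon) \,\le\, \widetilde t_{q,n}\bigl((\varepsilon/(2M))^{1/p}\bigr).
\end{equation*}
Since $q \in (1,\infty)$, Proposition \ref{prop:characterization of L^p cutoff} makes the $L^q$ items \ref{item:A1}--\ref{item:A5} mutually equivalent; in particular, each of the conditions ``$\lambda_n\widetilde t_{q,n}(\eta)\to\infty$'', ``$\widetilde t_{q,n}(\eta_1)/\widetilde t_{q,n}(\eta_2)\to 1$'', and ``$\widetilde t_{q,n}(\eta_1)-\widetilde t_{q,n}(\eta_2) = O(\lambda_n^{-1})$'' holds at some threshold iff it holds at every threshold. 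Feeding the mixing-time sandwich into these threshold-free statements immediately delivers the equivalences \ref{item:B1} $\iff$ \ref{item:B2} $\iff$ \ref{item:A1} $\iff$ \ref{item:A2}, and, via the mixing-time criterion of Proposition \ref{prop:cutoff and mixing time}, also \ref{item:B3} $\iff$ \ref{item:A3} and \ref{item:B4} $\iff$ \ref{item:A4}.

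For the $(\widetilde t_{f,n}(\varepsilon), \lambda_n^{-1})$-cutoff assertion \ref{item:B5}, the same sandwich places $\widetilde t_{f,n}(\varepsilon)$ within $O(\lambda_n^{-1})$ of $\widetilde t_{q,n}(\eta)$ for any fixed $\eta > 0$; combining this with the divergence-level bounds transfers the $L^q$-window to the $f$-side in both directions — the upper bound $\widetilde d_{f,n} \le 2M\,\widetilde d_{q,n}^{p}$ handles the $c \to +\infty$ tail of $\widetilde G$, and the lower bound $\widetilde d_{f,n} \ge m\,\widetilde d_{q,n}^{q}$ handles the $c \to -\infty$ tail of $\underline G$ in Definition \ref{def:cutoff phenomenon}. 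The discrete-time statement is obtained by replaying the argument verbatim using the discrete-time half of Proposition \ref{prop:characterization of L^p cutoff}, with $\lambda_n$ replaced by $\lambda_n' = \min\{1, -\ln\kappa_n\}$, or by $\min\{1,\lambda_n\}$ in the lazy case, since the $f$-divergence sandwich is insensitive to whether $T = [0,\infty)$ or $T = \mathbb{N}$.

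The main obstacle I anticipate is not the pointwise envelope (which is free from the definition) but rather absorbing the multiplicative threshold shift from $(\varepsilon/m)^{1/q}$ to $(\varepsilon/(2M))^{1/p}$ in the $L^q$-mixing time into a perturbation of order $\lambda_n^{-1}$. This is exactly situation (i) of the ``comparison between mixing times'' principle highlighted in the sketch: because Peres' conjecture is known to hold for $L^q$-cutoff with $q \in (1,\infty)$, a two-sided mixing-time comparison with distinct multiplicative $\varepsilon$-constants still preserves cutoff, precutoff, and the cutoff window, which is what ultimately closes the circle of equivalences B1--B5 with A1--A5.
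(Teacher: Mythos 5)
Your approach is genuinely different from the paper's, and in places it is cleaner. Both proofs start from the same pointwise envelope $m(|x-1|^p+|x-1|^q)\le f(x)\le M(|x-1|^p+|x-1|^q)$, and both exploit the resulting sandwich on $d_{f,n}(x,t)$; but the paper stays at the \emph{divergence level}: it runs the Riesz--Thorin interpolation argument on $d_{p,n}^p$ and $d_{q,n}^q$ separately, pushes the exponential factor $e^{-v\lambda_n a_{p,q}}$ through the sandwich, and thereby obtains a self-contained exponential contraction $\widetilde d_{f,n}(u+v)\le \tfrac{MC_{p,q}}{m}\,\widetilde d_{f,n}(u)e^{-v\lambda_n a_{p,q}}$, from which $\widetilde G(c)\to 0$ and $\underline G(c)\to M$ are read off directly. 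You instead collapse the sandwich to a single $L^q$ comparison $m\,\widetilde d_{q,n}^q\le \widetilde d_{f,n}\le 2M\,\widetilde d_{q,n}^p$, pass to mixing times, and invoke Proposition \ref{prop:characterization of L^p cutoff} as a black box. This modular route is slicker for \ref{item:B1}--\ref{item:B4} and for the bridge to \ref{item:A1}--\ref{item:A5}, and it matches the paper's own treatment of the mixing-time comparison step (the paper also derives $\widetilde t_{q,n}(\varepsilon)\ge \widetilde t_{f,n}(M(\varepsilon^p+\varepsilon^q))$ and $\widetilde t_{f,n}(\varepsilon)\ge \widetilde t_{q,n}((\varepsilon/m)^{1/q})$ for that purpose). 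Where your route is leaner, the paper's is more robust: the direct contraction handles the $(\widetilde t_{f,n}(\varepsilon),\lambda_n^{-1})$-cutoff in one stroke and keeps all constants intrinsic to $f$.

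The one genuine soft spot is your handling of the lower tail in \ref{item:B5}. You argue that the bound $\widetilde d_{f,n}\ge m\,\widetilde d_{q,n}^q$ ``handles the $c\to -\infty$ tail of $\underline G$,'' but this only delivers $\underline G(c)\ge m\bigl(\liminf_n\sup_{t<\cdot}\widetilde d_{q,n}(t)\bigr)^q\to m M_q^q$ as $c\to-\infty$, whereas Definition \ref{def:cutoff phenomenon}(iii) requires $\underline G(c)\to M=\limsup_n \widetilde d_{f,n}(0)$. Since the constants $m$ and $M$ in the $\mathcal{F}_{p,q}$ envelope do not match, $m M_q^q$ need not equal $M$ \emph{a priori}. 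This is rescued by an observation you leave implicit: under \ref{item:A5} the exponential contraction in $L^q$ forces $M_q=\infty$, hence $m M_q^q=\infty$ and the upper bound $\underline G(c)\le M$ then gives $M=\infty$ as well. Alternatively, and more cleanly, you could bypass the $\underline G$ argument altogether and use the mixing-time characterization of $(t_n,w_n)$-cutoff from \parencite[Proposition 2.3, Corollary 2.5]{chen2008cutoff}: your sandwich already shows $|\widetilde t_{f,n}(\delta)-\widetilde t_{f,n}(\eta)|=\mathcal{O}_{\delta,\eta}(\lambda_n^{-1})$ for all thresholds, which together with $\lambda_n\widetilde t_{f,n}(\varepsilon)\to\infty$ is exactly what that characterization needs. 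Either fix is short, but as written the lower-tail step does not quite close.
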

\begin{proof}
    We first consider the case of continuous-time. The proof sketch is that we will first prove items \ref{item:B2} to \ref{item:B5} are equivalent, then prove \ref{item:B2} to \ref{item:B5} and \ref{item:A1} to \ref{item:A5} are equivalent, and finally \ref{item:B1} and \ref{item:B2} to \ref{item:B5} are equivalent. 
    
    \ref{item:B2}$\Rightarrow$\ref{item:B5}: For any given $1< p\leq q<\infty$, $\varepsilon>0$ and $f\in \mathcal{F}_{p,q}$, 
    by definition we have for some $m,M>0$ depending on $f$ such that
    \begin{equation*}
        m\left(|x-1|^p+|x-1|^q\right)\leq f(x)\leq M\left(|x-1|^p+|x-1|^q\right),
    \end{equation*}
    which yields
    \begin{equation}\label{eq:relationship between d_f and d_p^p+d_q^q}
        m\left(d_{p,n}^p(x,t)+d_{q,n}^q(x,t)\right)\leq d_{f,n}(x,t)\leq M\left(d_{p,n}^p(x,t)+d_{q,n}^q(x,t)\right).
    \end{equation}
    Following the proof in \parencite[Theorem 3.3]{chen2008cutoff}, we denote $\mu_{t,n}^x=\delta_{x,n} P_{t,n}$ and let $\frac{1}{p}+\frac{1}{p'}=1$, $\frac{1}{q}+\frac{1}{q'}=1$. For $t=u+v$ and any $g\in L^{p'}(\mathcal{X}_n,\pi_n)$, we have
    \begin{equation}\label{eq:Kolmogorov Chapman}
        \left(\mu_{t,n}^x-\pi_n\right)(g)=\left(\mu_{u,n}^x-\pi_n\right)\left(P_{v,n}-\Pi_n\right)(g),
    \end{equation}
    then by H\"older's inequality and Riesz-Thorin Interpolation Theorem as in Proposition \ref{prop:Riesz-Thorin theorem}, we have
    \begin{align}
        \left|\left(\mu_{t,n}^x-\pi_n\right)(g)\right|&\leq d_{p,n}(x,u)\left\|(P_{v,n}-\Pi_n)(g)\right\|_{p'}\nonumber\\
        &\leq d_{p,n}(x,u)2^{|1-2/p|}e^{-v\lambda_n (1-|1-2/p|)}\|g\|_{p'},\label{eq:convergence rate of d_p(x,u)}
    \end{align}
    where the second inequality comes from $\left\|P_{v,n}-\Pi_n\right\|_{L^1\rightarrow L^1}\leq 2$, $\left\|P_{v,n}-\Pi_n\right\|_{L^\infty\rightarrow L^\infty}\leq 2$ and \eqref{eq:L^2 convergence rate, continuous time} in Proposition \ref{prop: convergence rate of Markov semigroup}. Taking supremum over $g\in L^{p'}(\mathcal{X}_n,\pi_n), \|g\|_{p'}=1$ and $g\in L^{q'}(\mathcal{X}_n,\pi_n), \|g\|_{q'}=1$ respectively, according to Proposition \ref{prop:Important properties of L^p} item \ref{item:L^p properties 1}, we have
    \begin{align*}
        d_{p,n}^p(x,u+v)&\leq d_{p,n}^p(x,u)2^{|p-2|}e^{-v\lambda_n(p-|p-2|)},\\
        d_{q,n}^q(x,u+v)&\leq d_{q,n}^q(x,u)2^{|q-2|}e^{-v\lambda_n(q-|q-2|)},
    \end{align*}
    Plugging into \eqref{eq:relationship between d_f and d_p^p+d_q^q}, we have
    \begin{align*}
        d_{f,n}(x,u+v)&\leq M C_{p,q}\left(d_{p,n}^p(x,u)+d_{q,n}^q(x,u)\right)e^{-v\lambda_n a_{p,q}}\\
        &\leq \frac{MC_{p,q}}{m}\cdot d_{f,n}(x,u)\cdot e^{-v\lambda_n a_{p,q}},
    \end{align*}
    where $C_{p,q}:=\max \left\{2^{|p-2|},2^{|q-2|}\right\}>0$, $a_{p,q}:=\min \left\{p-|p-2|,q-|q-2|\right\}>0$. Taking supremum over $x\in \mathcal{X}$, we have 
    \begin{equation}\label{eq:exponential convergence of d_f}
        \widetilde d_{f,n}(u+v)\leq \frac{MC_{p,q}}{m}\cdot d_{f,n}(u)\cdot e^{-v\lambda_n a_{p,q}}.
    \end{equation}
    Now taking $u> \widetilde t_{f,n}(\varepsilon)$, $v=\lambda_n^{-1}c$ with $c>0$ in \eqref{eq:exponential convergence of d_f}, by monotonicity of $\widetilde d_{f,n}(t)$ in $t$ as shown in Proposition \ref{prop:Important properties of L^p} item \ref{item:L^p properties 2}, we have
    \begin{equation*}
        \widetilde G(c)=\limsup_{n\rightarrow\infty}\sup_{t>\widetilde t_{f,n}(\varepsilon)+c\lambda_n^{-1}} \widetilde d_{f,n}(t)\leq \frac{MC_{p,q}}{m}\cdot \varepsilon e^{-c a_{p,q}},
    \end{equation*}
    and similarly taking $0<u<\widetilde t_{f,n}(\varepsilon)+\lambda_n^{-1}c$, $v=-\lambda_n^{-1}c$ with $c<0$, we have
    \begin{equation*}
        \underline G(c)=\liminf_{n\rightarrow\infty}\inf_{t<\widetilde t_{f,n}(\varepsilon)+c\lambda_n^{-1}} \widetilde d_{f,n}(t)\geq \frac{m}{MC_{p,q}}\cdot \varepsilon e^{-ca_{p,q}}.
    \end{equation*}
    The desired result follows by taking $c\rightarrow +\infty$ and $c\rightarrow -\infty$ respectively.
    
    \ref{item:B5}$\Rightarrow$\ref{item:B4}$\Rightarrow$\ref{item:B3}: By definition.

    \ref{item:B3}$\Rightarrow$\ref{item:B2}: We follow the proof in \parencite[Theorem 4.2]{chen2008cutoff}. According to \eqref{eq:relationship between d_f and d_p^p+d_q^q} and Proposition \ref{prop:Important properties of L^p} item \ref{item:L^p properties 3}, we have 
    \begin{align*}
        \widetilde d_{f,n}(t)&\geq m\widetilde d_{q,n}^q(t)=m\left\|P_{t,n}-\Pi_n\right\|_{L^{q'}\rightarrow \infty}\\
        &\geq m\left\|P_{t,n}-\Pi_n\right\|_{L^{q'}\rightarrow L^{q'}},
    \end{align*}
    where $\frac{1}{q}+\frac{1}{q'}=1$. By Riesz-Thorin Interpolation Theorem, we have 
    \begin{align*}
        e^{-\lambda_n t}=\left\|P_{t,n}-\Pi_n\right\|_{L^2\rightarrow L^2}&\leq \left\|P_{t,n}-\Pi_n\right\|_{L^{q'}\rightarrow L^{q'}}^{q'/2}\left\|P_{t,n}-\Pi_n\right\|_{L^\infty\rightarrow L^\infty}^{1-q'/2}\\
        &\leq 2^{1-q'/2}\left\|P_{t,n}-\Pi_n\right\|_{L^{q'}\rightarrow L^{q'}}^{q'/2}, \quad q'\in (1,2],
    \end{align*}
    and 
    \begin{align*}
        e^{-\lambda_n t}=\left\|P_{t,n}-\Pi_n\right\|_{L^2\rightarrow L^2}&\leq \left\|P_{t,n}-\Pi_n\right\|_{L^1\rightarrow L^1}^{1-\frac{q'}{2(q'-1)}}\left\|P_{t,n}-\Pi_n\right\|_{L^{q'}\rightarrow L^{q'}}^{\frac{q'}{2(q'-1)}}\\
        &\leq 2^{1-\frac{q'}{2(q'-1)}}\left\|P_{t,n}-\Pi_n\right\|_{L^{q'}\rightarrow L^{q'}}^{\frac{q'}{2(q'-1)}}, \quad q'\in (2,\infty),
    \end{align*}
    hence we have 
    \begin{align}
        \left\|P_{t,n}-\Pi_n\right\|_{L^{q'}\rightarrow L^{q'}}&\geq 2^{1-2/q'}e^{-2\lambda_nt/q'}\geq \frac{1}{2}e^{-2\lambda_nt},\quad q'\in (1,2],\label{eq:lower bound of q' to q' with 1<q'<=2}\\
        \left\|P_{t,n}-\Pi_n\right\|_{L^{q'}\rightarrow L^{q'}}&\geq 2^{-(q'-2)/q'}e^{-2\lambda_nt (q'-1)/q'}\geq \frac{1}{2}e^{-2\lambda_nt}, \quad q'\in (2,\infty),\label{eq:lower bound of q' to q' with q'>2}
    \end{align}
    which implies 
    \begin{equation}\label{eq:lower bound of d_f}
        \widetilde d_{f,n}(t)\geq \frac{m}{2}e^{-2\lambda_nt}.
    \end{equation}
    Next, we suppose there is a precutoff sequence $\{s_n\}_{n=1}^\infty$, then there exist $0<a<b$ and $\delta>0$ such that 
    \begin{gather}
        2\delta=\liminf_{n\rightarrow\infty}\widetilde d_{f,n}(as_n)>0,\label{eq:d_f as_n}\\
        0=\limsup_{n\rightarrow\infty}\widetilde d_{f,n}(bs_n)\geq \frac{m}{2}\limsup_{n\rightarrow\infty} e^{-2b\lambda_ns_n},\label{eq:d_f bs_n}
    \end{gather}
    here \eqref{eq:d_f as_n} implies $s_n=\mathcal{O}(\widetilde t_{f,n}(\delta))$, otherwise for some small $\eta>0$, 
    \begin{equation*}
        \widetilde d_{f,n}(as_n)=\widetilde d_{f,n}\left(\frac{as_n}{(1+\eta)\widetilde t_{f,n}(\delta)}(1+\eta)\widetilde t_{f,n}(\delta)\right)\leq \delta, \quad as\enspace n\rightarrow\infty.
    \end{equation*}
    Combined with \eqref{eq:d_f bs_n} which indicates $\lambda_ns_n\rightarrow \infty$, we have $\lambda_n\widetilde t_{f,n}(\delta)\rightarrow\infty$. Similar to the proof of \ref{item:B2}$\Rightarrow$\ref{item:B5} where only some fixed $p,q,f$ and $\varepsilon$ are studied, it is easy to verify that $\{\widetilde t_{f,n}(\delta)\}_{n=1}^\infty$ is a cutoff sequence. Further by \parencite[Corollary 2.5]{chen2008cutoff}, $\{\widetilde t_{f,n}(\varepsilon)\}_{n=1}^\infty$ is a cutoff sequence with $\widetilde t_{f,n}(\varepsilon)\sim \widetilde t_{f,n}(\delta)$ for any $\varepsilon>0$, hence $\lambda_n\widetilde t_{f,n}(\varepsilon)\rightarrow\infty$ for any $\varepsilon>0$.

    \ref{item:B2} to \ref{item:B5}$\iff$\ref{item:A1} to \ref{item:A5}: It suffices to prove \ref{item:B2}$\Rightarrow$\ref{item:A1} and \ref{item:A2}$\Rightarrow$\ref{item:B2}. For any $p>1$, $f$ is convex with $f(1)=0$, and any $\varepsilon>0$, we denote 
    \begin{equation*}
        \widetilde T_{p,n}(\varepsilon):= \left\{t\in T:\widetilde d_{p,n}(t)\leq \varepsilon\right\}, \quad \widetilde T_{f,n}(\varepsilon):= \left\{t\in T:\widetilde d_{f,n}(t)\leq \varepsilon\right\}.
    \end{equation*}
    Next, for any given $1< p\leq q<\infty$ and $f\in \mathcal{F}_{p,q}$, by \eqref{eq:relationship between d_f and d_p^p+d_q^q}, if $t\in \widetilde T_{q,n}(\varepsilon)$, then
    \begin{equation*}
        \widetilde d_{f,n}(t)\leq M(\varepsilon^p+\varepsilon^q), 
    \end{equation*}
    where we have used monotonicity of $L^p$ distance in $p$. Similarly if $t\in \widetilde T_{f,n}(\varepsilon)$, we have 
    \begin{equation*}
        \widetilde d_{q,n}(t)\leq \left(\frac{\varepsilon}{m}\right)^{\frac{1}{q}},
    \end{equation*}
    and these two inequalities above imply
    \begin{equation*}
        \widetilde T_{q,n}(\varepsilon)\subset \widetilde T_{f,n}\left(M(\varepsilon^p+\varepsilon^q)\right), \quad \widetilde T_{f,n}(\varepsilon)\subset \widetilde T_{q,n}\left(\left(\varepsilon/m\right)^{\frac{1}{q}}\right),
    \end{equation*}
    taking infimum we obtain 
    \begin{equation}\label{eq:relationship between mixing times of t_f and t_q}
        \widetilde t_{q,n}(\varepsilon)\geq \widetilde t_{f,n}\left(M(\varepsilon^p+\varepsilon^q)\right), \quad \widetilde t_{f,n}(\varepsilon)\geq \widetilde t_{q,n}\left(\left(\varepsilon/m\right)^\frac{1}{q}\right).
    \end{equation}
    Now if \ref{item:B2} holds for some $1<p\leq q<\infty$ and 
    $\varepsilon'=M(\varepsilon^p+\varepsilon^q)$ with some $f\in \mathcal{F}_{p,q}$ such that $\lambda_n \widetilde t_{f,n}(\varepsilon')\rightarrow \infty$, by the first inequality in \eqref{eq:relationship between mixing times of t_f and t_q}, we have $\lambda_n \widetilde t_{q,n}(\varepsilon)\rightarrow \infty$, which is \ref{item:A1}. Moreover, if \ref{item:A2} holds, then for any given $1<q<\infty$ and any $\varepsilon''=\left(\varepsilon/m\right)^\frac{1}{q}$ such that $\lambda_n \widetilde t_{q,n}(\varepsilon'')\rightarrow \infty$, by the second inequality in \eqref{eq:relationship between mixing times of t_f and t_q} we have $\lambda_n\widetilde t_{f,n}(\varepsilon)\rightarrow \infty$, which is \ref{item:B2}.

    \ref{item:B1}$\iff$\ref{item:B2} to \ref{item:B5}: We only need to prove \ref{item:B1}$\Rightarrow$\ref{item:A1}, then by \ref{item:A1}$\Rightarrow$\ref{item:B2} to \ref{item:B5} and \ref{item:B2} to \ref{item:B5}$\Rightarrow$\ref{item:B1} we can get the result. Suppose there exist some $1<p\leq q<\infty$, some $\varepsilon>0$ and some $f\in \mathcal{F}_{p,q}$ such that $\lambda_n\widetilde t_{f,n}(\varepsilon)\rightarrow\infty$. Similar to the proof of \ref{item:B2}$\Rightarrow$\ref{item:B5}, there is a $\{\widetilde t_{f,n}(\varepsilon),\lambda_n^{-1}\}_{n=1}^\infty$ cutoff. According to \parencite[Corollary 2.5]{chen2008cutoff}, for any $\delta>0$, $\{\widetilde t_{f,n}(\delta)\}_{n=1}^\infty$ is a cutoff sequence. Again similar to the proof of \ref{item:B3}$\Rightarrow$\ref{item:B2} where only some fixed $p,q,f$ are studied, we can obtain that $\lambda_n\widetilde t_{f,n}(\delta)\rightarrow\infty$ for any $\delta>0$. Then by the first inequality in \eqref{eq:relationship between mixing times of t_f and t_q}, for any $\delta>0$, there exists $\delta_0>0$ which satisfies $\delta=M(\delta_0^p+\delta_0^q)$ such that $\lambda_n\widetilde t_{q,n}(\delta_0)\rightarrow\infty$, and this yields \ref{item:A1}.

    As to the case of $T=\mathbb N$, the proof is similar.
\end{proof}

As shown in Example \ref{eg:alpha-divergence}, $\alpha$-divergence belongs to the $\mathcal{F}_{p,q}$ family for $\alpha\in [2,\infty)$, and we note that R\'enyi divergence is a monotonic function of $\alpha$-divergence, that is,
\begin{equation*}
    R_\alpha(\delta_xP_t\|\pi)=\frac{1}{\alpha-1}\ln \left(1+(\alpha-1)D_\alpha(\delta_xP_t\|\pi)\right).
\end{equation*}
In view of the above, in the following result we shall give equivalent conditions for cutoff phenomenon under R\'enyi divergence for $\alpha\in [2,\infty]$, where we recall the $R_\infty$ divergence defined in \eqref{eq:R_infty}. Analogous to the notations introduced earlier, we denote
\begin{gather*}
    d_{f_\alpha}(x,t):=D_{\alpha}(\delta_xP_t\|\pi),\quad d_{R_\alpha}(x,t):=R_{\alpha}(\delta_xP_t\|\pi),\\
    \widetilde d_{f_\alpha}(t):=\pi\text{-}\mathop{\esssup}\limits_{x\in\mathcal{X}}d_{f_\alpha}(x,t),\quad \widetilde d_{R_\alpha}(t):=\pi\text{-}\mathop{\esssup}\limits_{x\in\mathcal{X}}d_{R_\alpha}(x,t),
\end{gather*}
and the mixing times with respect to R\'enyi divergence for $\varepsilon>0$ are defined as
\begin{equation*}
    \widetilde t_{f_\alpha}(\varepsilon):=\inf\left\{t\in T:\widetilde d_{f_\alpha}(t)\leq \varepsilon\right\},\quad \widetilde t_{R_\alpha}(\varepsilon):=\inf\left\{t\in T:\widetilde d_{R_\alpha}(t)\leq \varepsilon\right\},
\end{equation*}
and we still use $n$ in the subscripts to denote the $n^{th}$ process.
\begin{theorem}
    [Characterization of R\'enyi divergence cutoff for $2\leq \alpha\leq \infty$]
    \label{thm:Renyi cutoff alpha >= 2}
    Consider a sequence of Markov processes $\{X_t^{(n)},t\in T\}_{n=1}^\infty$ with state space $\mathcal{X}_n$, stationary distribution $\pi_n$, spectral gap $\lambda_n\geq 0$, second largest singular value $0<\kappa_n\leq 1$ and semigroup $P_{t,n}$, where $P_{t,n}$ is \textbf{reversible} on $L^2(\mathcal{X}_n,\pi_n)$ for each $n\geq 1$. Let $g_n(t):=\widetilde d_{R_{\alpha},n}(t)$, and assume $\lim_{t\rightarrow \infty}g_n(t)=0$ for each $n$. If $T=[0,\infty)$, then the following statements are equivalent: 
    \begin{enumerate}[label=(C\arabic*)]
        \item\label{item:C1} There exists some $\alpha\in [2,\infty]$ and some $\varepsilon>0$ such that $\lambda_n\widetilde t_{R_\alpha,n}(\varepsilon)\rightarrow\infty$.

        \item\label{item:C2} For any $\alpha\in [2,\infty]$ and any $\varepsilon>0$, $\lambda_n\widetilde t_{R_\alpha,n}(\varepsilon)\rightarrow\infty$.

        \item\label{item:C3} For any $\alpha\in [2,\infty]$ and any $\varepsilon>0$, precutoff occurs.

        \item\label{item:C4} For any $\alpha\in [2,\infty]$ and any $\varepsilon>0$, cutoff occurs.

        \item\label{item:C5} For any $\alpha\in [2,\infty]$ and any $\varepsilon>0$, there is a $(\widetilde t_{R_\alpha,n}(\varepsilon), \lambda_n^{-1})$ cutoff.
    \end{enumerate}
    Moreover, items \ref{item:C1} to \ref{item:C5} are equivalent to items \ref{item:B1} to \ref{item:B5} and \ref{item:A1} to \ref{item:A5}. 
    
    For $T=\mathbb N$, assume for some $\alpha\in [2,\infty]$ and some $\varepsilon>0$, $\lim_{n\rightarrow \infty}\widetilde t_{R_\alpha,n}(\varepsilon)=\infty$. If we substitute $\lambda_n'=\min \{1, -\ln \kappa_n\}$ into $\lambda_n$ in the items, then the statements above also hold. Besides, if $\lambda_n\rightarrow 0$ and the chains are lazy, we can also take $\lambda_n'=\min \left\{1, \lambda_n\right\}$.
\end{theorem}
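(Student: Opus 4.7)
The plan is to reduce $R_\alpha$-cutoff to cases already established in the preceding results: Theorem \ref{thm:f divergence cutoff F_pq} for $\alpha \in [2,\infty)$ (via $D_\alpha$) and Proposition \ref{prop:characterization of L^p cutoff} for $\alpha = \infty$ (via $L^\infty$). In both situations the bridge is a monotone bijection between the R\'enyi divergence and the target divergence, which forces the worst-case mixing times to agree up to a reparametrization of $\varepsilon$. This fits exactly the ``comparison of mixing times'' template from the sketch of the proof with $C_1 = C_2 = 1$, so precutoff, cutoff, the cutoff window, and the product-condition equivalences all transfer automatically between items \ref{item:C1}--\ref{item:C5} and the corresponding items from Theorem \ref{thm:f divergence cutoff F_pq} and Proposition \ref{prop:characterization of L^p cutoff}.

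For $\alpha \in [2,\infty)$, identity \eqref{eq:identity between alpha-divergence and Renyi divergence}, namely $R_\alpha = (\alpha-1)^{-1}\ln\!\bigl(1 + (\alpha-1)D_\alpha\bigr)$, is a continuous, strictly increasing bijection of $[0,\infty]$ fixing the origin, which gives
\[
\widetilde t_{R_\alpha, n}(\varepsilon) \;=\; \widetilde t_{f_\alpha, n}\!\left(\frac{e^{(\alpha-1)\varepsilon} - 1}{\alpha-1}\right).
\]
Example \ref{eg:alpha-divergence} shows $f_\alpha \in \mathcal{F}_{2,\alpha}$, so Theorem \ref{thm:f divergence cutoff F_pq} already equates $D_\alpha$-cutoff with items \ref{item:A1}--\ref{item:A5} and \ref{item:B1}--\ref{item:B5}, and the bijection passes these equivalences over to $R_\alpha$ without any loss.

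For $\alpha = \infty$ the key step is the pointwise identity
\[
\widetilde d_{R_\infty}(t) \;=\; \ln\!\bigl(1 + \widetilde d_\infty(t)\bigr),
\]
which I would prove in the continuous-time reversible setting as follows. The $\geq$ direction is immediate from $\widetilde d_\infty(t) \geq \esssup_{x,y}\!\bigl(h(t,x,y) - 1\bigr) = e^{\widetilde d_{R_\infty}(t)} - 1$. For the reverse inequality, the spectral expansion $h(t,x,y) - 1 = \sum_{k \geq 1} e^{-\lambda_k t}\phi_k(x)\phi_k(y)$ yields $h(t,x,x) - 1 = \sum_k e^{-\lambda_k t}\phi_k(x)^2 \geq 0$; Cauchy--Schwarz then gives $|h(t,x,y) - 1| \leq \sqrt{(h(t,x,x) - 1)(h(t,y,y) - 1)} \leq \esssup_z(h(t,z,z) - 1) \leq e^{\widetilde d_{R_\infty}(t)} - 1$. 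The resulting bijection $\widetilde t_{R_\infty, n}(\varepsilon) = \widetilde t_{\infty, n}(e^\varepsilon - 1)$ reduces $R_\infty$-cutoff to $L^\infty$-cutoff, which is in the $L^2$-type by Proposition \ref{prop:characterization of L^p cutoff}.

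For $T = \mathbb N$ the same strategy applies: the spectral argument goes through verbatim for lazy reversible chains (all eigenvalues lie in $[0,1]$, so $\lambda_k^n \geq 0$), and in the general case one reduces to continuous time via the continuized chain of \eqref{eq:continuized chain}, which preserves the spectral gap and the stationary distribution. I expect the main obstacle to be the $\alpha = \infty$ identity: its Cauchy--Schwarz step crucially relies on the non-negativity of the diagonal term $h(t,x,x) - 1$, a reflection of self-adjointness of the semigroup that typically fails outside the reversible setting and that is exactly why reversibility is essential in this theorem.
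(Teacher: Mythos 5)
Your overall strategy is sound, and the $\alpha\in[2,\infty)$ half essentially reproduces what the paper does: the identity $R_\alpha=(\alpha-1)^{-1}\ln(1+(\alpha-1)D_\alpha)$ gives $\widetilde t_{R_\alpha,n}(\varepsilon)=\widetilde t_{f_\alpha,n}\bigl((e^{(\alpha-1)\varepsilon}-1)/(\alpha-1)\bigr)$, which is exactly the paper's equation relating the two mixing times; the paper additionally re-derives an explicit exponential contraction for $\widetilde d_{R_\alpha,n}$ to get the $(\widetilde t_{R_\alpha,n}(\varepsilon),\lambda_n^{-1})$ cutoff, whereas you transfer the full list of items through the monotone bijection in one stroke, which is somewhat cleaner and equally valid, since a continuous strictly increasing bijection fixing $0$ commutes with the $\sup/\limsup/\liminf$ in Definition \ref{def:cutoff phenomenon}.

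For $\alpha=\infty$ you take a genuinely different route. The paper uses the reversibility identity $\lVert P_{2t,n}\rVert_{L^1\to L^\infty}=\lVert P_{t,n}\rVert_{L^2\to L^\infty}^2$ to obtain $\widetilde d_{R_\infty,n}(2t)=\widetilde d_{R_2,n}(t)$ and then invokes the $L^2/R_2$ equivalence already in hand. You instead relate $R_\infty$ directly to $L^\infty$ via the diagonal-domination identity $\widetilde d_{R_\infty,n}(t)=\ln\bigl(1+\widetilde d_{\infty,n}(t)\bigr)$, established by Cauchy--Schwarz. Your argument is correct in continuous time, though in general state spaces you should replace the spectral expansion $h(t,x,y)-1=\sum_k e^{-\lambda_k t}\phi_k(x)\phi_k(y)$ (which presumes a discrete spectrum) with the more robust semigroup identity $h(2t,x,y)-1=\langle h(t,x,\cdot)-1,\,h(t,y,\cdot)-1\rangle_\pi$, which holds under reversibility and delivers the same Cauchy--Schwarz bound.

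The genuine gap is in the discrete-time, non-lazy case. Your identity $\widetilde d_{R_\infty,n}(k)=\ln\bigl(1+\widetilde d_{\infty,n}(k)\bigr)$ fails at odd $k$ when $P$ has negative eigenvalues, because $h(k,x,x)-1$ need not be nonnegative then: for the $3$-state chain with $P(x,y)=\tfrac12$ for $x\neq y$ and uniform $\pi$, at $k=1$ one has $\esssup_{x,y}(h-1)=\tfrac12$ but $\esssup_{x,y}\lvert h-1\rvert=1$, so $\widetilde d_{R_\infty}(1)=\ln\tfrac32\neq\ln 2=\ln(1+\widetilde d_\infty(1))$. Moreover your proposed repair, passing to the continuized chain, does not close the gap: the discrete-time statement with $\lambda_n'=\min\{1,-\ln\kappa_n\}$ is about the cutoff of the discrete-time chain itself, not its continuization, and those are distinct assertions. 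A correct fix that stays within your framework is to observe that the Cauchy--Schwarz identity does hold at every even time $2k$ (since $P_{2k}=P_kP_k^*$), so $\widetilde d_{R_\infty,n}(2k)=\ln(1+\widetilde d_{\infty,n}(2k))$ for all $k$, and then use monotonicity in $t$ to conclude the two mixing times agree up to an additive $O(1)$ error, which is negligible once $\widetilde t_{R_\alpha,n}(\varepsilon)\to\infty$; this is in effect what the paper's $\widetilde d_{R_\infty,n}(2k)=\widetilde d_{R_2,n}(k)$ identity accomplishes.
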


\begin{remark}
    Theorem \ref{thm:f divergence cutoff F_pq} and \ref{thm:Renyi cutoff alpha >= 2} indicate that for any given sequence of Markov processes, $L^p$-cutoff with $p\in (1,\infty]$, $\alpha$-divergence cutoff with $\alpha\in [2,\infty)$ and R\'enyi divergence cutoff with $\alpha\in [2,\infty]$ are all equivalent, and we call these three types of divergences as well as members of the $\mathcal{F}_{p,q}$ family with $1<p\leq q<\infty$ as \boldmath$L^2$\textbf{-type} divergence under cutoff phenomenon.
\end{remark}

\begin{proof}
    We only consider the case of continuous-time, and the proof for $T=\mathbb N$ is similar. An outline of the proof is that we first consider the situation of $\alpha\in [2,\infty)$, under which we prove \ref{item:C2} to \ref{item:C5} are equivalent, then prove \ref{item:C1}$\Rightarrow$\ref{item:B1} and \ref{item:B2}$\Rightarrow$\ref{item:C2}$\Rightarrow$\ref{item:C1}. Next, we incorporate the case of $\alpha=\infty$.

    Under $\alpha\in [2,\infty)$:

    \ref{item:C2}$\Rightarrow$\ref{item:C5}: For any given $\alpha\in [2,\infty)$ and $\varepsilon>0$, $f_\alpha\in \mathcal{F}_{2,\alpha}$ implies we can substitute $p=2$ and $q=\alpha$ into \eqref{eq:relationship between d_f and d_p^p+d_q^q}, and therefore by \eqref{eq:exponential convergence of d_f} we have
    \begin{equation*}
        \widetilde d_{f_\alpha,n}(u+v)\leq \frac{M2^{\alpha-2}}{m}\cdot d_{f_\alpha,n}(u)\cdot e^{-2v\lambda_n },
    \end{equation*}
    which yields
    \begin{align*}
        \frac{\widetilde d_{R_\alpha,n}(u+v)}{\widetilde d_{R_\alpha,n}(u)}&=\frac{\ln \left(1+(\alpha-1)\widetilde d_{f_\alpha,n}(u+v)\right)}{\ln \left(1+(\alpha-1)\widetilde d_{f_\alpha,n}(u)\right)}\\
        &\leq \frac{\widetilde d_{f_\alpha,n}(u+v)}{\widetilde d_{f_\alpha,n}(u)}\cdot \left(1+(\alpha-1)\widetilde d_{f_\alpha,n}(u)\right)\\
        &\leq \frac{M2^{\alpha-2}}{m}\cdot e^{-2v\lambda_n}\cdot \exp \left((\alpha-1)\widetilde d_{R_\alpha,n}(u)\right).
    \end{align*}
    Let $\varphi(t)=te^{(\alpha-1)t}$ be an increasing function with respect to $t\in (0,\infty)$, we have 
    \begin{equation*}
        \widetilde d_{R_\alpha,n}(u+v)\leq \frac{M2^{\alpha-2}}{m}\cdot e^{-2v\lambda_n}\cdot \varphi\left(\widetilde d_{R_\alpha,n}(u)\right),
    \end{equation*}
    using the same argument as the proof in \ref{item:B2}$\Rightarrow$\ref{item:B5}, we take $u>\widetilde t_{R_\alpha,n}(\varepsilon), v=\lambda_n^{-1}c$ and $0<u<\widetilde t_{R_\alpha,n}(\varepsilon)-\lambda_n^{-1}c, v=-\lambda_n^{-1}c$ with $c>0$, as $\varphi: \mathbb R^+\rightarrow \mathbb R^+$ is strictly increasing, we get the result.

    \ref{item:C5}$\Rightarrow$\ref{item:C4}$\Rightarrow$\ref{item:C3}: By definition.

    \ref{item:C3}$\Rightarrow$\ref{item:C2}: By \eqref{eq:lower bound of d_f}, we have 
    \begin{equation*}
        \widetilde d_{R_\alpha,n}(t)\geq \frac{1}{\alpha-1}\ln \left(1+\frac{m(\alpha-1)}{2}e^{-2\lambda_nt}\right),
    \end{equation*}
    which yields the result via a similar argument in the proof of \ref{item:B3}$\Rightarrow$\ref{item:B2}.

    \ref{item:C1}$\Rightarrow$\ref{item:B1}: Suppose for some $\alpha\in [2,\infty)$ and some $\varepsilon>0$, $\lambda_n\widetilde t_{R_\alpha,n}(\varepsilon)\rightarrow\infty$. Similar to the proof of \ref{item:B2} to \ref{item:B5}$\iff$\ref{item:A1} to \ref{item:A5}, for any $\varepsilon'>0$, it is easy to verify that 
    \begin{equation}\label{eq:relationship between mixing time of D_alpha and R_alpha}
        \widetilde t_{f_\alpha,n}(\varepsilon')= \widetilde t_{R_\alpha,n}\left(\frac{1}{\alpha-1}\ln (1+(\alpha-1)\varepsilon')\right),
    \end{equation}
    then if take some $\varepsilon'$ such that $\varepsilon=\frac{1}{\alpha-1}\ln (1+(\alpha-1)\varepsilon')$, $\lambda_n \widetilde t_{f_\alpha,n}(\varepsilon')\rightarrow\infty$.

    \ref{item:B2}$\Rightarrow$\ref{item:C2}$\Rightarrow$\ref{item:C1}: Take $p=2$, $q=\alpha$ and $f_\alpha\in \mathcal{F}_{2,\alpha}$, for any $\varepsilon>0$ we have $\lambda_n\widetilde d_{f_\alpha,n}(\varepsilon)\rightarrow\infty$, then by \eqref{eq:relationship between mixing time of D_alpha and R_alpha} we get the result.

    Under $\alpha\in [2,\infty]$:

    For continuous-time setting, we show that 
    \begin{equation}\label{eq:R_infty and R_2, reversible}
        \widetilde d_{R_\infty,n}(2t)=\widetilde d_{R_2,n}(t).
    \end{equation}
    We first recall that for any $f\in L^1(\mathcal{X}_n,\pi_n)$,
    \begin{equation*}
        \left\|P_{2t,n}f\right\|_{\infty}=\sup_{\|g\|_1\leq 1}\langle P_{2t,n}f, g\rangle_{\pi_n},
    \end{equation*}
    hence by reversibility,
    \begin{align*}
        \left\|P_{2t,n}\right\|_{L^1\rightarrow L^\infty}&=\sup_{\|f\|_1\leq 1, \|g\|_1\leq 1}\langle P_{2t,n}f, g\rangle_{\pi_n}\geq \sup_{\|f\|_1\leq 1}\langle P_{2t,n}f, f\rangle_{\pi_n}\\
        &=\sup_{\|f\|_1\leq 1}\langle P_{t,n}f, P_{t,n}^* f\rangle_{\pi_n}=\sup_{\|f\|_1\leq 1}\left\|P_{t,n}f\right\|_2^2\\
        &=\left\|P_{t,n}\right\|_{L^2\rightarrow L^\infty}^2.
    \end{align*}
    Moreover, we already have the reverse direction of the inequality above, therefore
    \begin{equation*}
        \left\|P_{2t,n}\right\|_{L^1\rightarrow L^\infty}= \left\|P_{t,n}\right\|_{L^2\rightarrow L^\infty}^2,
    \end{equation*}
    which is \eqref{eq:R_infty and R_2, reversible}. 
    
    By \parencite[Proposition 2.3]{chen2008cutoff}, $R_\infty$-cutoff is equivalent to $L^\infty$-cutoff, which is further equivalent to $L^2$ and $R_2$-cutoff by Proposition \ref{prop:characterization of L^p cutoff}, then we get the result. For discrete-time setting, the argument is similar.
\end{proof}

\subsection{$\alpha$-divergence and R\'enyi divergence with $1<\alpha\leq 2$}
\label{sec:alpha-divergence, 1<alpha<=2}

While the previous subsection \ref{sec:F_p,q family} investigates $\alpha$-divergence and R\'enyi divergence with $2<\alpha < \infty$, in this subsection we shall study equivalent conditions for cutoff phenomenon under $\alpha$-divergence and R\'enyi divergence with $1<\alpha\leq 2$. The technique in this part utilizes an argument about non-linear log-Sobolev/Poincar\'e inequalities (LSI/PI), which can be found in \parencite{chafai2004entropies,polyanskiy2019improved,varopoulos1985hardy,mossel2013reverse}.

\begin{definition}
    [Non-linear functional constants]
    \label{def:non-linear functional inequalities}
    Given an infinitesimal generator $\mathcal{A}$ and its associated Dirichlet form $\mathcal{E}_{\mathcal{A}}$ with $\pi$ as the stationary distribution, we define 
    \begin{align}
        (\text{Non-linear LSI})\qquad \rho(p)&:=\frac{p^2}{4(p-1)}\inf_{\mathrm{Ent}_\pi[f^p]>0}\frac{\mathcal{E}_{\mathcal{A}}\left(f,f^{p-1}\right)}{\mathrm{Ent}_\pi[f^p]},\label{eq:non-linear LSI}\\
        (\text{Non-linear PI})\qquad \lambda(p)&:=\frac{p^2}{4(p-1)}\inf_{\mathrm{Var}_\pi[f^{\frac{p}{2}}]>0}\frac{\mathcal{E}_{\mathcal{A}}\left(f,f^{p-1}\right)}{\mathrm{Var}_\pi[f^{\frac{p}{2}}]},\label{eq:non-linear PI}
    \end{align}
    where $p>0$, $p\neq 1$, $f>0$, $\mathrm{Ent}_\pi[f]:=\mathbb E_\pi\left[f\ln \frac{f}{\mathbb E_\pi[f]}\right]$ and $\mathrm{Var}_{\pi}[f]:=\mathbb E_\pi\left[\left(f-\mathbb E_\pi[f]\right)^2\right]$. We can also extend to case of $p=1$ by taking limits, i.e.
    \begin{equation*}
        \rho(1):=\inf_{\mathrm{Ent}_\pi[f]>0}\frac{\mathcal{E}_{\mathcal{A}}\left(f,\ln f\right)}{4\mathrm{Ent}_\pi[f]},\quad
        \lambda(1):=\inf_{\mathrm{Var}_\pi[f^{\frac{1}{2}}]>0}\frac{\mathcal{E}_{\mathcal{A}}\left(f,\ln f\right)}{4\mathrm{Var}_\pi[f^{\frac{1}{2}}]}.
    \end{equation*}
    In particular, it can be seen that $\rho(2)$ is the classical log-Sobolev constant, $\lambda(2)$ is the spectral gap or classical Poincar\'e constant and $\rho_n(1)$ is the classical modified log-Sobolev constant as in \parencite{bobkov2006modified}.
\end{definition}

\begin{lemma}\label{lemma: monotonicity of rho and lambda}
    The mappings $p\mapsto \rho(p)$ and $p\mapsto \lambda(p)$ are both non-increasing within $p\in (0,2]$, and non-decreasing within $p\in [2,\infty)$. 
\end{lemma}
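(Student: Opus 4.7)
The plan is to reduce the monotonicity to a simple pointwise algebraic inequality by an appropriate change of variables. Fix $p \le q$ either both in $[2,\infty)$ (aiming at $\rho(p) \le \rho(q)$ and $\lambda(p) \le \lambda(q)$) or both in $(0,2]$ (aiming at the reverse). In the first case I would set $f := g^{q/p}$ for an arbitrary positive test function $g$; in the second, $g := f^{p/q}$. Both substitutions share the key property that $f^p = g^q$ and $f^{p/2} = g^{q/2}$ pointwise, so $\mathrm{Ent}_\pi[f^p] = \mathrm{Ent}_\pi[g^q]$ and $\mathrm{Var}_\pi[f^{p/2}] = \mathrm{Var}_\pi[g^{q/2}]$ literally match. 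Hence the ratios defining $\rho(p), \rho(q)$ (resp.\ $\lambda(p), \lambda(q)$) differ only through their Dirichlet-form numerators, and the monotonicity will follow once we compare those numerators.

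Using the symmetric reversible representation \eqref{eq:expression of Dirichelet f,g, reversible}, the required Dirichlet-form inequality reduces to the pointwise bound for $a, b > 0$:
\begin{equation*}
\frac{p^2}{4(p-1)}\bigl(a^{q/p}-b^{q/p}\bigr)\bigl(a^{q(p-1)/p}-b^{q(p-1)/p}\bigr) \le \frac{q^2}{4(q-1)}(a-b)\bigl(a^{q-1}-b^{q-1}\bigr)
\end{equation*}
when $2 \le p \le q$, and the reversed pointwise bound (with $p/q$ in place of $q/p$) when $0 < p \le q \le 2$. I would prove these by the integral identity $a^\alpha - b^\alpha = \alpha \int_b^a x^{\alpha-1}\,dx$ (valid for any $\alpha \ne 0$; assume WLOG $a > b$). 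Both sides collapse to a common factor of $q^2/4$ (resp.\ $p^2/4$) times a double integral over $[b, a]^2$ of a monomial, so all the coefficients $\frac{p^2}{4(p-1)}$ cancel cleanly, and the remaining inequality compares only the monomials.

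After symmetrizing $x \leftrightarrow y$ and setting $t = x/y$, everything boils down to the elementary algebraic statement
\begin{equation*}
(t^{A-1} - 1)(t^{q-A-1} - 1) \ge 0, \qquad t > 0,
\end{equation*}
with $A = q/p$ in the first regime (and $(t^{A-1}-1)(t^{p-A-1}-1) \ge 0$ with $A = p/q$ in the second). Each inequality holds iff its two exponents share a sign. For $2 \le p \le q$: $A - 1 = q/p - 1 \ge 0$ and $q - A - 1 = q(p-1)/p - 1 \ge p - 2 \ge 0$. For $0 < p \le q \le 2$: $A - 1 = p/q - 1 \le 0$ and $p - A - 1 = p(q-1)/q - 1 \le q - 2 \le 0$ (using $p(q-1)/q \le q-1$ when $q \in [1,2]$, and $p(q-1)/q < 0$ when $q \in (0,1)$). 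Integrating the pointwise inequality against $p(x, dy)\pi(dx)$, dividing by the common denominator, and taking infimum over the free test function yields the monotonicity of $\rho$; the proof for $\lambda$ is identical because the substitution also forces $\mathrm{Var}_\pi$ to match. The only real subtlety is keeping the direction of the inequality straight in the two regimes and verifying the sign conditions on the exponents; the case split at $p = 2$ is precisely what causes the monotonicity to flip there.
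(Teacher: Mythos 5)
Your proof is correct and takes a genuinely different, more self-contained route to the key step. Both you and the paper begin with the same change of variables ($f = g^{q/p}$, so $f^p = g^q$ and $f^{p/2} = g^{q/2}$), which equalizes the $\mathrm{Ent}$/$\mathrm{Var}$ denominators and reduces the claim to a comparison of the two weighted Dirichlet-form numerators — this is precisely \eqref{eq:proving monotonicity of rho(p) equivalent form} in the paper. The divergence is in how that Dirichlet-form comparison is established. The paper invokes two external results from Mossel, Oleszkiewicz and Sen: it cites their Theorem~1.7 wholesale for monotonicity on $(0,2]$, and their Theorem~2.1 for the inequality $\frac{q^2}{q-1}\mathcal{E}_{\mathcal{A}}(g^{1/q},g^{(q-1)/q}) \ge \frac{p^2}{p-1}\mathcal{E}_{\mathcal{A}}(g^{1/p},g^{(p-1)/p})$ on $[2,\infty)$. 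You instead re-derive that comparison from scratch: you pass to the symmetric reversible representation \eqref{eq:expression of Dirichelet f,g, reversible}, apply $a^\alpha - b^\alpha = \alpha\int_b^a x^{\alpha-1}\,dx$ to each factor (which makes the weights $\frac{p^2}{4(p-1)}$, $\frac{q^2}{4(q-1)}$ both collapse to the same constant $q^2/4$, resp.\ $p^2/4$), symmetrize in $x \leftrightarrow y$, and reduce to the sign check $(t^{A-1}-1)(t^{q-A-1}-1)\ge 0$, which holds in both regimes because the two exponents, summing to $q-2$ (resp.\ $p-2$), share a sign with $p=2$ as the pivot. The payoff is that one elementary pointwise inequality handles $(0,2]$ and $[2,\infty)$ uniformly, effectively reproving the cited Mossel et al.\ inequality rather than quoting it. The one thing worth flagging explicitly is that your argument leans on reversibility through \eqref{eq:expression of Dirichelet f,g, reversible}; the Lemma as stated doesn't say so, but the paper's cited proof needs this too, and the Lemma is only applied downstream under reversibility hypotheses.
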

\begin{proof}
    According to \parencite[Theorem 1.7]{mossel2013reverse}, $p\mapsto \rho(p)$ is non-increasing within $p\in (0,2]$. For $2\leq p<q<\infty$, it suffices to prove
    \begin{equation}\label{eq:proving monotonicity of rho(p) equivalent form}
        \frac{p^2}{4(p-1)}\inf_{\mathrm{Ent}_\pi[f^p]>0}\frac{\mathcal{E}_{\mathcal{A}}\left(f,f^{p-1}\right)}{\mathrm{Ent}_\pi[f^p]}\leq \frac{q^2}{4(q-1)}\inf_{\mathrm{Ent}_\pi[f^q]>0}\frac{\mathcal{E}_{\mathcal{A}}\left(f^{\frac{p}{q}},f^{\frac{q-1}{q}p}\right)}{\mathrm{Ent}_\pi[f^p]}
    \end{equation}
    by substituting $f^{\frac{q}{p}}$ to $f$ in the right hand side. For $g=f^p$, according to \parencite[Theorem 2.1]{mossel2013reverse}, we have 
    \begin{equation*}
        \frac{q^2}{q-1}\mathcal{E}_{\mathcal{A}}\left(g^{\frac{1}{q}}, g^{\frac{q-1}{q}}\right)\geq \frac{p^2}{p-1}\mathcal{E}_{\mathcal{A}}\left(g^{\frac{1}{p}}, g^{\frac{p-1}{p}}\right),
    \end{equation*}
    plugging into \eqref{eq:proving monotonicity of rho(p) equivalent form} we have $\rho(p)\leq \rho(q)$. The proof for the case of $\lambda(p)$ is similar.
\end{proof}

The non-linear functional constants introduced above will be used in the proof of our next result, which is presented in the following theorem.
\begin{theorem}
    [Characterization of $\alpha$-divergence and R\'enyi divergence cutoff for $1<\alpha\leq 2$]
    \label{thm:alpha-divergence cutoff with 1<alpha<2}
    Consider a sequence of Markov processes $\{X_t^{(n)},t\in T\}_{n=1}^\infty$ with state space $\mathcal{X}_n$, stationary distribution $\pi_n$, generator $\mathcal{A}_n$, spectral gap $\lambda_n\geq 0$, second largest singular value $0<\kappa_n\leq 1$ and semigroup $P_{t,n}$, where $P_{t,n}$ is \textbf{reversible} on $L^2(\mathcal{X}_n,\pi_n)$ for each $n\geq 1$.  If $T=[0,\infty)$, let $g_n(t):=\widetilde d_{f_{\alpha},n}(t)$ and assume $\lim_{t\rightarrow \infty}g_n(t)=0$ for each $n$, then the following statements are equivalent:
    \begin{enumerate}[label=(D\arabic*)]
        \item\label{item:D1} There exists some $1<\alpha\leq 2$ and some $\varepsilon>0$ such that $\lambda_n\widetilde t_{f_\alpha,n}(\varepsilon)\rightarrow \infty$. 

        \item\label{item:D2}For any $1<\alpha\leq 2$ and any $\varepsilon>0$, $\lambda_n\widetilde t_{f_\alpha,n}(\varepsilon)\rightarrow \infty$.

        \item\label{item:D3} For any $1<\alpha\leq 2$ and any $\varepsilon>0$, precutoff occurs.

        \item\label{item:D4} For any $1<\alpha\leq 2$ and any $\varepsilon>0$, cutoff occurs.

        \item\label{item:D5} For any $1<\alpha\leq 2$ and any $\varepsilon>0$, there is a $\left(\widetilde t_{f_\alpha,n}(\varepsilon),\lambda_n^{-1}\right)$ cutoff. 
    \end{enumerate}
    Meanwhile, if we take $g_n(t):=\widetilde d_{R_{\alpha},n}(t)$ and assume $\lim_{t\rightarrow \infty}g_n(t)=0$ for each $n$, then the following statements are equivalent:
    \begin{enumerate}[label=(D\arabic*')]
        \item\label{item:D1'} There exists some $1<\alpha\leq 2$ and some $\varepsilon>0$ such that $\lambda_n\widetilde t_{R_\alpha,n}(\varepsilon)\rightarrow \infty$. 

        \item\label{item:D2'} For any $1<\alpha\leq 2$ and any $\varepsilon>0$, $\lambda_n\widetilde t_{R_\alpha,n}(\varepsilon)\rightarrow \infty$.

        \item\label{item:D3'} For any $1<\alpha\leq 2$ and any $\varepsilon>0$, precutoff occurs.

        \item\label{item:D4'} For any $1<\alpha\leq 2$ and any $\varepsilon>0$, cutoff occurs.

        \item\label{item:D5'} For any $1<\alpha\leq 2$ and any $\varepsilon>0$, there is a $\left(\widetilde t_{R_\alpha,n}(\varepsilon),\lambda_n^{-1}\right)$ cutoff. 
    \end{enumerate}
    Moreover, items \ref{item:D1} to \ref{item:D5} are equivalent to items \ref{item:D1'} to \ref{item:D5'}. 
    
    For $T=\mathbb N$, 
    assume for some $1<\alpha\leq 2$ and some $\varepsilon>0$, $\lim_{n\rightarrow \infty}\widetilde t_{f_\alpha,n}(\varepsilon)=\infty$ and $\lim_{n\rightarrow \infty}\widetilde t_{R_\alpha,n}(\varepsilon)=\infty$ respectively. If we substitute $\lambda_n'=\min \{1, -\ln \kappa_n\}$ into $\lambda_n$ in the items and assume $\kappa_n\rightarrow 1$, then the statements above also hold. Besides, if $\lambda_n\rightarrow 0$ and the chains are lazy, we can also take $\lambda_n'=\min \left\{1, \lambda_n\right\}$.
\end{theorem}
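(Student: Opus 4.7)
The plan is to mirror the structure of Theorem \ref{thm:f divergence cutoff F_pq}, replacing the pointwise $\mathcal{F}_{p,q}$ bounds (which fail for $\alpha \in (1,2]$ by Example \ref{eg:alpha-divergence}) with functional inequalities supplied by the non-linear Poincar\'e constants in Definition \ref{def:non-linear functional inequalities}. In continuous time, for $h_t := \frac{d \mu_0 P_{t,n}}{d\pi_n}$ we first compute
\[
\frac{d}{dt} D_\alpha(h_t \| \pi_n) \;=\; -\tfrac{\alpha}{\alpha-1}\,\mathcal{E}_{\mathcal{A}_n}(h_t, h_t^{\alpha-1}),
\]
then apply the non-linear PI at $p=\alpha$ to bound $\mathcal{E}_{\mathcal{A}_n}(h_t, h_t^{\alpha-1})$ below by $\tfrac{4(\alpha-1)}{\alpha^2}\lambda(\alpha)\,\mathrm{Var}_{\pi_n}[h_t^{\alpha/2}]$. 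Since $\alpha\le 2$ makes $x\mapsto x^{\alpha/2}$ concave, Jensen forces $\mathbb{E}_{\pi_n}[h_t^{\alpha/2}]\le 1$ and hence $\mathrm{Var}_{\pi_n}[h_t^{\alpha/2}] \ge (\alpha-1)D_\alpha$; Lemma \ref{lemma: monotonicity of rho and lambda} then guarantees $\lambda(\alpha)\ge \lambda(2)=\lambda_n$, so we may lower-bound by the spectral gap. Putting these together gives the exponential contraction
\[
\widetilde d_{f_\alpha, n}(u+v) \;\le\; e^{-c_\alpha \lambda_n v}\,\widetilde d_{f_\alpha, n}(u), \qquad c_\alpha := \tfrac{4(\alpha-1)}{\alpha}>0.
\]
In parallel, the monotonicity $D_\alpha \ge \mathrm{KL}$ from Proposition \ref{prop:properties of f-divergence}, Pinsker's inequality, and an eigenfunction argument in $L^2(\pi_n)$ supply the matching lower bound $\widetilde d_{f_\alpha, n}(t) \ge \tfrac12 e^{-2\lambda_n t}$.

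With these two ingredients in hand, the internal equivalences \ref{item:D2}--\ref{item:D5} go through just as in the proof of Theorem \ref{thm:f divergence cutoff F_pq}. For \ref{item:D2} $\Rightarrow$ \ref{item:D5} we take $u>\widetilde t_{f_\alpha, n}(\varepsilon)$ with $v=c/\lambda_n$ and send $c\to\pm\infty$ to read off a $(\widetilde t_{f_\alpha, n}(\varepsilon), \lambda_n^{-1})$-cutoff; \ref{item:D5} $\Rightarrow$ \ref{item:D4} $\Rightarrow$ \ref{item:D3} are definitional; and for \ref{item:D3} $\Rightarrow$ \ref{item:D2} we combine the spectral lower bound with any precutoff sequence $s_n$, which forces $\lambda_n s_n\to\infty$ and hence $\lambda_n \widetilde t_{f_\alpha, n}(\delta)\to\infty$, then extend to arbitrary $\varepsilon>0$ via Proposition \ref{prop:cutoff and mixing time}.

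The delicate step is \ref{item:D1} $\Rightarrow$ \ref{item:D2}, where information at a single $(\alpha_0,\varepsilon_0)$ must be lifted to every $\alpha\in(1,2]$ and every $\varepsilon>0$. Our plan is to route through $L^2$-cutoff as a bridge: the monotonicity $D_{\alpha_0}\le D_2=\chi^2$ converts \ref{item:D1} into $\lambda_n \widetilde t_{2,n}(\sqrt{\varepsilon_0})\to\infty$, which by Proposition \ref{prop:characterization of L^p cutoff} triggers the full $L^2$-cutoff. To return to an arbitrary $\alpha$, Taylor expansion of $f_\alpha(h)=h^\alpha-1-\alpha(h-1)$ gives $f_\alpha(h)\ge c'_\alpha (h-1)^2$ uniformly on $h\in[0,2]$, so that $\chi^2(\mu\|\pi)\le C_\alpha D_\alpha(\mu\|\pi)$ whenever $\|h\|_\infty\le 2$; applying this for $t\ge \widetilde t_{\infty,n}(1)$ supplies the reverse comparison $\widetilde d_{2,n}^2(t)\le C_\alpha \widetilde d_{f_\alpha,n}(t)$, and thus $\widetilde t_{f_\alpha,n}(\varepsilon)\ge \widetilde t_{2,n}(\sqrt{C_\alpha\varepsilon})$ for $\varepsilon$ small. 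Multiplying by $\lambda_n$ and invoking $L^2$-cutoff yields \ref{item:D2} for small $\varepsilon$, and the exponential contraction propagates it to all $\varepsilon>0$. The equivalence of \ref{item:D1'}--\ref{item:D5'} with \ref{item:D1}--\ref{item:D5} is then immediate from the strictly monotone identity $\widetilde t_{R_\alpha,n}(\varepsilon)=\widetilde t_{f_\alpha,n}\!\big(\tfrac{e^{(\alpha-1)\varepsilon}-1}{\alpha-1}\big)$, and the discrete-time case is handled by parallel arguments with $\kappa_n$ replacing $e^{-\lambda_n}$. The main obstacle will be the quantitative $L^\infty$-to-$L^2$-to-$D_\alpha$ handoff: one has to verify that $\widetilde t_{\infty,n}(1) \le \widetilde t_{f_\alpha,n}(\varepsilon)$ for $\varepsilon$ small and $n$ large, which rests on the simultaneous control of all $L^p$-mixing times provided by reversible $L^2$-cutoff.
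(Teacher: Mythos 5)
Your treatment of \ref{item:D2}$\Rightarrow$\ref{item:D5} via the nonlinear Poincar\'e constant, the Jensen argument $\mathbb{E}_\pi[h^{\alpha/2}]\le 1$, and Lemma \ref{lemma: monotonicity of rho and lambda}, together with the definitional chain \ref{item:D5}$\Rightarrow$\ref{item:D4}$\Rightarrow$\ref{item:D3} and the Pinsker/Riesz--Thorin lower bound for \ref{item:D3}$\Rightarrow$\ref{item:D2}, all follow the paper essentially line by line (up to constants; the paper's lower bound is $\widetilde d_{f_\alpha,n}(t)\ge \tfrac18 e^{-4\lambda_n t}$, not $\tfrac12 e^{-2\lambda_n t}$). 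The genuine divergence is in how you try to lift a single pair $(\alpha_0,\varepsilon_0)$ to all $(\alpha,\varepsilon)$, and this is where a real gap appears.

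You route \ref{item:D1}$\Rightarrow$\ref{item:D2} through $L^2$-cutoff and then attempt to come back via a Taylor bound $f_\alpha(h)\ge c'_\alpha(h-1)^2$ valid for $h\in[0,2]$, so that $\chi^2\le C_\alpha D_\alpha$ once $\|h_t-1\|_\infty\le 1$. The hinge is your claim that $\widetilde t_{\infty,n}(1)\le \widetilde t_{f_\alpha,n}(\varepsilon)$ for small $\varepsilon$ and large $n$, which you defer to ``simultaneous control of all $L^p$-mixing times from $L^2$-cutoff.'' That control is for $p\in(1,\infty]$ in the $L^p$ scale and does not transfer to the $D_\alpha$ scale for $\alpha$ near $1$: $D_\alpha$ is not a $\chi^p$-divergence, and a priori $\widetilde t_{f_\alpha,n}(\varepsilon)$ could sit well below $\widetilde t_{\infty,n}(1)$ (e.g.\ the only lower bound your plan furnishes is $\widetilde t_{f_\alpha,n}(\varepsilon)\gtrsim \lambda_n^{-1}$, potentially of much smaller order than $\widetilde t_{2,n}$). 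In fact, showing that $D_\alpha$-mixing times are comparable to $L^2$-mixing times across $\alpha\in(1,2]$ is precisely the nontrivial content of this step; you cannot assume it to prove it.

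The paper closes this gap with a different idea that your plan lacks: a H\"older/Riesz--Thorin submultiplicativity argument carried out directly on the R\'enyi side. Writing $\widetilde d_{R_\alpha,n}(t)=\tfrac{\alpha}{\alpha-1}\ln\|P_{t,n}\|_{L^{\alpha'}\to L^\infty}$ and using reversibility to collapse the interpolation, one obtains
\[
\widetilde t_{R_\alpha,n}(\varepsilon)\le \widetilde t_{R_\beta,n}(\varepsilon)+\widetilde t_{R_\gamma,n}(\varepsilon),\qquad 1+\tfrac{1}{\alpha}=\tfrac{1}{\beta}+\tfrac{1}{\gamma},
\]
which after iteration yields the two-sided comparison
\[
\widetilde t_{R_\beta,n}(\varepsilon)\le \widetilde t_{R_\alpha,n}(\varepsilon)\le \Big(\big\lceil \tfrac{\beta'}{\alpha'}\big\rceil+1\Big)\widetilde t_{R_\beta,n}(\varepsilon),\qquad 1<\beta<\alpha.
\]
This directly shows R\'enyi (hence $\alpha$-divergence) mixing times at different orders are comparable up to constants, which is exactly what is needed for \ref{item:D1'}$\Rightarrow$\ref{item:D2'}, and hence \ref{item:D1}$\Rightarrow$\ref{item:D2}, without ever touching $L^\infty$. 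You should replace the $L^\infty$/Taylor detour with a comparison of this type, or supply a self-contained proof of $\widetilde t_{\infty,n}(1)\le \widetilde t_{f_\alpha,n}(\varepsilon)$ (which, absent such a comparison, is not available).

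A secondary gap: your sentence about the discrete-time case (``parallel arguments with $\kappa_n$ replacing $e^{-\lambda_n}$'') glosses over the fact that the continuous-time contraction estimate rests on differentiating $t\mapsto d_{f_\alpha,n}(x,t)$, which is unavailable in discrete time. The paper instead uses a pointwise Taylor-type inequality for $\tfrac{t^\alpha-1}{\alpha-1}$ and a variance comparison between the Dirichlet forms of $I-PP^*$ and $I-P$ (Miclo's device), yielding a one-step contraction $d_{f_\alpha,n}(\mu P\|\pi)\le (1-\tfrac{\alpha-1}{2}\lambda_n)\,d_{f_\alpha,n}(\mu\|\pi)$ under laziness, or an analogous bound in terms of $\kappa_n$. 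This is a genuinely separate argument and needs to be spelled out, not inherited from the continuous-time case.
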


\begin{remark}
    Theorem \ref{thm:alpha-divergence cutoff with 1<alpha<2} and \ref{thm:Renyi cutoff alpha >= 2} have a common ground of the $R_2$ divergence for $\alpha=2$, which indicates that items \ref{item:D1} to \ref{item:D5} and items \ref{item:D1'} to \ref{item:D5'} are all equivalent to items \ref{item:A1} to \ref{item:A5}, \ref{item:B1} to \ref{item:B5} and \ref{item:C1} to \ref{item:C5}, and hence we can still call $\alpha$-divergence and R\'enyi divergence with $1<\alpha<2$ as \boldmath$L^2$\textbf{-type} divergences under cutoff phenomenon, although some mild assumptions like \eqref{eq:1/2 lazy chains} may be added in discrete-time case.
\end{remark}

\begin{proof}
    We first consider $T=[0,\infty)$. We first show that \ref{item:D2}$\Rightarrow$\ref{item:D5}$\Rightarrow$\ref{item:D4}$\Rightarrow$\ref{item:D3}$\Rightarrow$\ref{item:D2}, then the case for \ref{item:D2'}$\Rightarrow$\ref{item:D5'}$\Rightarrow$\ref{item:D4'}$\Rightarrow$\ref{item:D3'}$\Rightarrow$\ref{item:D2'} is similar using the idea in the proof of Theorem \ref{thm:Renyi cutoff alpha >= 2}. Finally we prove \ref{item:D1'}$\iff$\ref{item:D2'} to \ref{item:D5'}, and \ref{item:D1}$\iff$\ref{item:D1'} to \ref{item:D5'}$\iff$ \ref{item:D1} to \ref{item:D5} to complete the proof.

    \ref{item:D2}$\Rightarrow$\ref{item:D5}: Given $1<\alpha\leq 2$ and $\varepsilon>0$, we use $h_{t,n}^x(y)=h_n(t,x,y)$ to represent the probability density function of $\delta_xP_{t,n}$ with respect to $\pi_n$ as shown in \eqref{eq:L^p divergence}, and hence 
    \begin{equation*}
        d_{f_\alpha,n}(x,t)=\frac{1}{\alpha-1}\left(\int_{\mathcal{X}_n}h_n^\alpha(t,x,y)\pi_n(dy)-1\right)=\frac{\left\|h_n(t,x,\cdot)\right\|_\alpha^{\alpha}-1}{\alpha-1}.
    \end{equation*}
    Taking differentiation with respect to $t$ and use \eqref{eq:time volution of h_t}, we have 
    \begin{align*}
        \frac{\partial}{\partial t} d_{f_\alpha,n}(x,t)&=\frac{\alpha}{\alpha-1}\int_{\mathcal{X}_n}h_n^{\alpha-1}(t,x,y)\frac{\partial}{\partial t}h_n(t,x,y)\pi_n(dy)\\
        &=\frac{\alpha}{\alpha-1}\int_{\mathcal{X}_n}\left(h_{t,n}^{x}(y)\right)^{\alpha-1}\mathcal{A}_n^*h_{t,n}^x(y)\pi_n(dy)\\
        &=-\frac{\alpha}{\alpha-1}\mathcal{E}_{\mathcal{A}_n}\left(h_{t,n}^x,\left(h_{t,n}^x\right)^{\alpha-1}\right)\\
        &=-\frac{4(\alpha-1)}{\alpha}\cdot\frac{\alpha^2}{4(\alpha-1)}\cdot\frac{\mathcal{E}_{\mathcal{A}_n}\left(h_{t,n}^x,\left(h_{t,n}^x\right)^{\alpha-1}\right)}{\left\|h_{t,n}^x\right\|_\alpha^\alpha-1}\cdot d_{f_\alpha,n}(x,t).
    \end{align*}
    Since $\mathbb E_\pi\left[\left(h_{t,n}^x\right)^{\alpha/2}\right]\leq \mathbb E_\pi\left[h_{t,n}^x\right]^{\alpha/2}=1$ for $1<\alpha\leq 2$ by H\"older's inequality, recalling the non-linear Poincar\'e constant defined in \eqref{eq:non-linear PI}, we have 
    \begin{align*}
        \frac{\partial}{\partial t} d_{f_\alpha,n}(x,t)&\leq -\frac{4(\alpha-1)}{\alpha}\cdot\frac{\alpha^2}{4(\alpha-1)}\cdot\frac{\mathcal{E}_{\mathcal{A}_n}\left(h_{t,n}^x,\left(h_{t,n}^x\right)^{\alpha-1}\right)}{\mathrm{Var}_{\pi_n}\left[\left(h_{t,n}^x\right)^{\frac{\alpha}{2}}\right]}\cdot d_{f_\alpha,n}(x,t)\\
        &\leq -\frac{4(\alpha-1)}{\alpha}\cdot \lambda_n(\alpha)\cdot d_{f_\alpha,n}(x,t),
    \end{align*}
    where $\lambda_n(\alpha)$ denotes the non-linear Poincar\'e constant of the $n^{th}$ process. According to Lemma \ref{lemma: monotonicity of rho and lambda}, we have 
    \begin{equation*}
        \lambda_n(\alpha)\geq \lambda_n(2)=\lambda_n, \quad 1<\alpha\leq 2,
    \end{equation*}
    therefore
    \begin{equation*}
        \frac{\partial}{\partial t} d_{f_\alpha,n}(x,t)\leq -\frac{4(\alpha-1)}{\alpha}\cdot \lambda_n \cdot d_{f_\alpha,n}(x,t),
    \end{equation*}
    integrating from $t=u$ to $t=u+v$ for any $u,v\geq 0$ yields
    \begin{equation}\label{eq:convergene rate of alpha divergence when 1<alpha<2}
        d_{f_\alpha,n}(x,u+v)\leq d_{f_\alpha,n}(x,u)\exp \left(-\frac{4(\alpha-1)}{\alpha}\cdot \lambda_n v\right).
    \end{equation}
    Taking supremum over $x\in \mathcal{X}_n$ and use the same argument in the proof of \ref{item:B2}$\Rightarrow$\ref{item:B5} we get the result.

    \ref{item:D5}$\Rightarrow$\ref{item:D4}$\Rightarrow$\ref{item:D3}: By definition.

    \ref{item:D3}$\Rightarrow$\ref{item:D2}: Recalling Proposition \ref{prop:properties of f-divergence} items \ref{item:monotonicity of Renyi divergence} and \ref{item:Pinsker's inequality}, for any given $1<\alpha\leq 2$, we have 
    \begin{equation*}
        d_{f_\alpha,n}(x,t)\geq \mathrm{KL}\left(\delta_xP_{t,n}\|\pi_n\right)\geq 2\mathrm{TV}^2(\delta_xP_{t,n},\pi_n)=\frac{1}{2}d_{1,n}^2(x,t).
    \end{equation*}
    Then by Proposition \ref{prop:Important properties of L^p}, we have 
    \begin{equation*}
        \widetilde d_{f_\alpha,n}(t)\geq \frac{1}{2}\widetilde d_{1,n}^2(t)=\frac{1}{2}\left\|P_{t,n}-\Pi_n\right\|_{L^\infty\rightarrow B}^2\geq \frac{1}{2}\left\|P_{t,n}-\Pi_n\right\|_{L^\infty\rightarrow L^\infty}^2.
    \end{equation*}
    Recalling that $\left\|P_{t,n}-\Pi_n\right\|_{L^2\rightarrow L^2}=e^{-\lambda_nt}$ as stated in Proposition \ref{prop: convergence rate of Markov semigroup}, using Riesz-Thorin Interpolation Theorem we have 
    \begin{align*}
        e^{-\lambda_nt}=\left\|P_{t,n}-\Pi_n\right\|_{L^2\rightarrow L^2}&\leq \left\|P_{t,n}-\Pi_n\right\|_{L^1\rightarrow L^1}^{\frac{1}{2}}\left\|P_{t,n}-\Pi_n\right\|_{L^\infty\rightarrow L^\infty}^{\frac{1}{2}}\\
        &\leq 2^{\frac{1}{2}}\left\|P_{t,n}-\Pi_n\right\|_{L^\infty\rightarrow L^\infty}^{\frac{1}{2}},
    \end{align*}
    hence
    \begin{equation*}
        \widetilde t_{f_\alpha,n}(t)\geq \frac{1}{8}e^{-4\lambda_nt},
    \end{equation*}
    then by similar argument in the proof of \ref{item:B3}$\Rightarrow$\ref{item:B2} we get the result. 

    Equivalence within items \ref{item:D2'} to \ref{item:D5'}: Similar to the proof in Theorem \ref{thm:Renyi cutoff alpha >= 2}. 

    \ref{item:D1'}$\Rightarrow$\ref{item:D2'}: We follow the proof of \parencite[Proposition 5.1]{chen2008cutoff}. Suppose for some $1<\alpha\leq 2$ and $\varepsilon>0$, $\lambda_n \widetilde t_{R_\alpha,n}(\varepsilon)\rightarrow\infty$. We first recall  that R\'enyi divergence can be written as the following form:
    \begin{equation}\label{eq:norm form of Renyi divergence}
        d_{R_\alpha,n}(x,t)=\frac{\alpha}{\alpha-1}\ln \left\|h_{t,n}^x\right\|_\alpha,
    \end{equation}
    and that by Proposition \ref{prop:Important properties of L^p},
    \begin{equation*}
        \left\|h_{t,n}^x\right\|_\alpha=\sup \left\{\mu_{t,n}^x(g):\|g\|_{\alpha'}\leq 1\right\},
    \end{equation*}
    where we denote $\mu_{t,n}^x:=\delta_xP_{t,n}$, and $\alpha'$ as the conjugate of $\alpha$, i.e. $\frac{1}{\alpha}+\frac{1}{\alpha'}=1$. Then, for $t=u+v$ with $u,v\geq 0$ and any $g\in L^{\alpha'}(\mathcal{X}_n,\pi_n)$, we have 
    \begin{equation*}
        \mu_{t,n}^x(g)=\mu_{u,n}^xP_{v,n}(g),
    \end{equation*}
    hence by H\"older's inequality, for any given $1<\beta<\alpha$,
    \begin{equation*}
        \left|\mu_{t,n}^x(g)\right|\leq \left\|h_{u,n}^x\right\|_{\beta}\left\|P_{v,n}(g)\right\|_{\beta'}\leq \left\|h_{u,n}^x\right\|_{\beta}\left\|P_{v,n}\right\|_{L^{\alpha'}\rightarrow L^{\beta'}},
    \end{equation*}
    where $\frac{1}{\beta}+\frac{1}{\beta'}=1$. Taking supremum over $g\in L^{\alpha'}(\mathcal{X}_n,\pi_n)$ yields
    \begin{equation}\label{eq:Renyi divergence Holder decomposition}
        \left\|h_{t,n}^x\right\|_\alpha\leq \left\|h_{u,n}^x\right\|_{\beta}\left\|P_{v,n}\right\|_{L^{\alpha'}\rightarrow L^{\beta'}}.
    \end{equation}
    For some $1<\gamma<\alpha$ with $\frac{1}{\gamma}+\frac{1}{\gamma'}=1$ which satisfies $\frac{1}{\alpha'}=\frac{1}{\beta'}+\frac{1}{\gamma'}$ or equivalently $1+\frac{1}{\alpha}=\frac{1}{\beta}+\frac{1}{\gamma}$, by the Riesz-Thorin Interpolation Theorem we have
    \begin{align*}
        \left\|P_{v,n}\right\|_{L^{\alpha'}\rightarrow L^{\beta'}}&\leq \left\|P_{v,n}\right\|_{L^1\rightarrow L^{\gamma}}^{\gamma/\beta'}\left\|P_{v,n}\right\|_{L^{\gamma'}\rightarrow L^{\infty}}^{1-\gamma/\beta'}\\
        &=\left\|P_{v,n}^*\right\|_{L^{\gamma'}\rightarrow L^{\infty}}^{\gamma/\beta'}\left\|P_{v,n}\right\|_{L^{\gamma'}\rightarrow L^{\infty}}^{1-\gamma/\beta'}\\
        &=\left\|P_{v,n}\right\|_{L^{\gamma'}\rightarrow L^{\infty}},
    \end{align*}
    where we recall that the processes are reversible. Therefore, we have
    \begin{equation*}
        \left\|h_{t,n}^x\right\|_\alpha\leq \left\|h_{u,n}^x\right\|_{\beta}\left\|P_{v,n}\right\|_{L^{\gamma'}\rightarrow \infty}. 
    \end{equation*}
    Plugging the above into \eqref{eq:norm form of Renyi divergence}, and observe that by Proposition \ref{prop:Important properties of L^p},
    \begin{equation}\label{eq:expression of overline Renyi divergence}
        \widetilde d_{R_\alpha,n}(t)=\frac{\alpha}{\alpha-1}\ln \|P_{t,n}\|_{L^{\alpha'}\rightarrow \infty},
    \end{equation}
    we obtain
    \begin{equation*}
        \widetilde d_{R_\alpha,n}(u+v)\leq \frac{\alpha'}{\beta'}\widetilde d_{R_\beta,n}(u)+\frac{\alpha'}{\gamma'}\widetilde d_{R_\gamma,n}(v),
    \end{equation*}
    which implies for any $\varepsilon_1,\varepsilon_2>0$,
    \begin{equation*}
        \widetilde t_{R_\alpha,n}\left(\frac{\alpha'}{\beta'}\varepsilon_1+\frac{\alpha'}{\gamma'}\varepsilon_2\right)\leq \widetilde t_{R_\beta,n}(\varepsilon_1)+\widetilde t_{R_\gamma,n}(\varepsilon_2).
    \end{equation*}
   If we further take $\varepsilon_1=\varepsilon_2=\varepsilon$,
    \begin{equation}\label{eq:relationship between mixing time of Renyi divergence for alpha<2}
        \widetilde t_{R_\alpha,n}\left(\varepsilon\right)\leq \widetilde t_{R_\beta,n}(\varepsilon)+\widetilde t_{R_\gamma,n}(\varepsilon).
    \end{equation}
    Note that the argument above does not have specific requirements for $\alpha$ except for $\alpha>1$, we can take 
    \begin{equation*}
        \frac{1}{\alpha_i}=\left(1-\frac{1}{\beta}\right)i+\frac{1}{\alpha}, \quad i=0,1,2,...
    \end{equation*}
    which satisfies $1+\frac{1}{\alpha_i}=\frac{1}{\alpha_{i+1}}+\frac{1}{\beta}$, $\alpha_0=\alpha$ and $\alpha_i\leq \beta$ if and only if $i+1\geq \frac{\beta'}{\alpha'}$. Combining with \eqref{eq:relationship between mixing time of Renyi divergence for alpha<2}, we get 
    \begin{equation*}
        \widetilde t_{R_{\alpha_i},n}\left(\varepsilon\right)\leq \widetilde t_{R_\beta,n}(\varepsilon)+\widetilde t_{R_{\alpha_{i+1}},n}(\varepsilon), \quad i=0,1,..., \left\lceil \frac{\beta'}{\alpha'}\right\rceil-1,
    \end{equation*}
    taking summation we have
    \begin{equation}\label{eq:relationship between mixing time of Renyi divergence with beta<alpha<2}
        \widetilde t_{R_\beta,n}(\varepsilon)\leq\widetilde t_{R_\alpha,n}\left(\varepsilon\right)\leq \left(\left\lceil \frac{\beta'}{\alpha'}\right\rceil+1\right)\widetilde t_{R_\beta,n}(\varepsilon),
    \end{equation}
    where the left inequality comes from the monotonicity of R\'enyi divergence as shown in Proposition \ref{prop:properties of f-divergence} item \ref{item:monotonicity of Renyi divergence}. Recalling that by \parencite[Corollary 2.5]{chen2008cutoff}, \ref{item:D1} implies for any $\delta>0$, there is a $\left(\widetilde t_{R_\alpha,n}(\delta),\lambda_n^{-1}\right)$ cutoff, which further yields for any $\delta>0$, $\lambda_n\widetilde t_{R_\alpha,n}(\delta)\rightarrow \infty$, then by \eqref{eq:relationship between mixing time of Renyi divergence with beta<alpha<2} we obtain that for any $1<\beta<\alpha$ and any $\delta>0$, $\lambda_n\widetilde t_{R_\beta,n}(\delta)\rightarrow \infty$. For $\alpha\leq \beta\leq 2$, the result also holds via monotonicity of R\'enyi divergence.

    \ref{item:D1}$\iff$\ref{item:D1'} to \ref{item:D5'}$\iff$\ref{item:D2} to \ref{item:D5}: Use the identity \eqref{eq:identity between alpha-divergence and Renyi divergence}. 

    Next, we consider $T=\mathbb N$. The discrete-time case for $\alpha$-divergence is not that trivial compared with $L^p$-cutoff, since the techniques in the continuous-time case involve taking derivative with respect to $t$ which may not apply in discrete-time setting. We only need to give an adapted version of the proof in \ref{item:D2}$\Rightarrow$\ref{item:D5}, and the rest are similar to the continuous-time case. The proof is inspired by \parencite[Proposition 6]{miclo1997}. In this case, the Markov semigroup satisfies $P_{k,n}=P_{1,n}^k$, and we denote the one-step transition probability of adjoint generator as $p_n^*(x,\cdot)$. 

    We observe that for any $t,s\geq 0$ and any $1<\alpha\leq 2$, 
    \begin{equation}\label{eq:alpha-divergence Taylor type inequality with 1<alpha<=2}
        \frac{t^\alpha-1}{\alpha-1}\geq \frac{s^\alpha-1}{\alpha-1}+\frac{\alpha s^{\alpha-1}}{\alpha-1}(t-s)+\left(t^{\frac{\alpha}{2}}-s^{\frac{\alpha}{2}}\right)^2,
    \end{equation}
    which can be easily verified via taking derivative with respect to $\frac{t}{s}\geq 0$. For any $\mu_{0,n}$ on $\mathcal{X}_n$, let $f=\frac{d\mu_{0,n}}{d\pi_n}$ be the initial density for simplicity of notation, and we have $P_{1,n}^*f=\frac{d\mu_{0,n}P_{1,n}}{d\pi_n}$. Plug $t=f(z)$ and $s=P_{1,n}^*f(y)$ into \eqref{eq:alpha-divergence Taylor type inequality with 1<alpha<=2}, and take expectation on both sides with respect to $p_n^*(y,dz)$, we have 
    \begin{align*}
        P_{1,n}^*\left(\frac{f^\alpha-1}{\alpha-1}\right)(y)&\geq \frac{\left(P_{1,n}^*f(y)\right)^\alpha-1}{\alpha-1}+\int_{z\in \mathcal{X}_n}\left(f(z)^{\frac{\alpha}{2}}-\left(P_{1,n}^*f(y)\right)^{\frac{\alpha}{2}}\right)^2p_n^*(y,dz)\\
        &\geq \frac{\left(P_{1,n}^*f(y)\right)^\alpha-1}{\alpha-1}+\mathrm{Var}_{p^*(y,\cdot)}\left[f^{\frac{\alpha}{2}}\right]\\
        &=\frac{\left(P_{1,n}^*f(y)\right)^\alpha-1}{\alpha-1}+P_{1,n}^*\left(f^\alpha\right)(y)-\left(P_{1,n}^*\left(f^{\frac{\alpha}{2}}\right)(y)\right)^2.
    \end{align*}
    Taking expectation on $y$ with respect to $\pi_n$, we have 
    \begin{equation}\label{eq:convergence of d_alpha in discrete time}
        d_{f_\alpha,n}\left(\mu_{0,n}\|\pi_n\right)\geq d_{f_\alpha,n}\left(\mu_{0,n}P_{1,n}\|\pi_n\right)+\left\langle f^{\frac{\alpha}{2}},\left(I-P_{1,n}P_{1,n}^*\right)f^{\frac{\alpha}{2}}\right\rangle_{\pi_n}.
    \end{equation}
    Similar to the proof in the continuous case, we have 
    \begin{equation}\label{eq:comparing Dirichlet form and d_alpha in discrete time}
        \frac{\left\langle f^{\frac{\alpha}{2}},\left(I-P_{1,n}P_{1,n}^*\right)f^{\frac{\alpha}{2}}\right\rangle_{\pi_n}}{d_{f_\alpha,n}\left(\mu_{0,n}\|\pi_n\right)}\geq (\alpha-1)\frac{\left\langle f^{\frac{\alpha}{2}},\left(I-P_{1,n}P_{1,n}^*\right)f^{\frac{\alpha}{2}}\right\rangle_{\pi_n}}{\mathrm{Var}_{\pi_n}\left[f^\frac{\alpha}{2}\right]}
        \geq (\alpha-1)\lambda\left(P_{1,n}P_{1,n}^*-I\right),
    \end{equation}
    where $\lambda\left(P_{1,n}P_{1,n}^*-I\right)$ is the spectral gap of the generator $P_{1,n}P_{1,n}^*-I$. If the Markov chains are lazy and $\lambda_n\rightarrow 0$, denote $q_n(x,\cdot)$ as the transition probability of $P_{1,n}P_{1,n}^*$. Hence, we have 
    \begin{align*}
        \frac{\pi_n(dx)q_n(x,dy)}{dxdy}&\geq \frac{\pi_n(dx)p_n(x,\{x\})p_n^*(x,dy)}{dxdy}\geq \frac{1}{2}\frac{\pi_n(dx)p_n^*(x,dy)}{dxdy}\\
        &=\frac{1}{2}\frac{\pi_n(dy)p_n(y,dx)}{dxdy},
    \end{align*}
    where the second inequality comes from the laziness \eqref{eq:1/2 lazy chains}. Recalling \eqref{eq:expression of Dirichlet f,f}, we have 
    \begin{equation}\label{eq:comparing Dirichlet form of PP* and P}
        \left\langle f^{\frac{\alpha}{2}},\left(I-P_{1,n}P_{1,n}^*\right)f^{\frac{\alpha}{2}}\right\rangle_{\pi_n}\geq \frac{1}{2}\left\langle f^{\frac{\alpha}{2}},\left(I-P_{1,n}\right)f^{\frac{\alpha}{2}}\right\rangle_{\pi_n},
    \end{equation}
    therefore 
    \begin{equation*}
        \lambda\left(P_{1,n}P_{1,n}^*-I\right)\geq \frac{1}{2}\lambda\left(P_{1,n}-I\right)=\frac{1}{2}\lambda_n,
    \end{equation*}
    combined with \eqref{eq:convergence of d_alpha in discrete time} and \eqref{eq:comparing Dirichlet form and d_alpha in discrete time} we obtain 
    \begin{align}
        d_{f_\alpha,n}\left(\mu_{0,n}P_{1,n}\|\pi_n\right)&\leq \left(1-\frac{\alpha-1}{2}\lambda_n\right)d_{f_\alpha,n} \left(\mu_{0,n}\|\pi_n\right)\nonumber\\
        &\leq \exp \left(-\frac{\alpha-1}{2}\lambda_n\right)d_{f_\alpha,n} \left(\mu_{0,n}\|\pi_n\right).\label{eq:convergence ratio of alpha-divergence, lazy condition}
    \end{align}
    On the other hand, if we use the condition of $\kappa_n$, recalling that $P_{1,n}=P_{1,n}^*$, we have 
    \begin{equation*}
        \lambda\left(P_{1,n}P_{1,n}^*-I\right)=1-\kappa_n^2,
    \end{equation*}
    hence
    \begin{align}
        d_{f_\alpha,n}\left(\mu_{0,n}P_{1,n}\|\pi_n\right)&\leq \left(1-(\alpha-1)(1-\kappa_n^2)\right)d_{f_\alpha,n} \left(\mu_{0,n}\|\pi_n\right)\nonumber\\
        &\leq \exp\left(-(\alpha-1)(1-\kappa_n^2)\right)d_{f_\alpha,n} \left(\mu_{0,n}\|\pi_n\right)\nonumber\\
        &\leq \exp\left(2(\alpha-1)\kappa_n^2\ln \kappa_n\right)d_{f_\alpha,n} \left(\mu_{0,n}\|\pi_n\right),\label{eq:convergence ratio of alpha-divergence, kappa condition}
    \end{align}
    where we have used $1-x\geq -x\ln x$ for $x\geq 0$ in the last inequality. For \eqref{eq:convergence ratio of alpha-divergence, lazy condition}, the desired result follows directly by using similar argument as in the continuous-time setting. For \eqref{eq:convergence ratio of alpha-divergence, kappa condition}, since $\kappa_n\rightarrow 1$, we get the result.
\end{proof}

\begin{remark}
    \eqref{eq:relationship between mixing time of Renyi divergence with beta<alpha<2} indicates that mixing times of R\'enyi divergences, and hence $\alpha$-divergences, are equivalent for different $\alpha\in (1,\infty)$. 
\end{remark}

With the discussions before, we have a direct corollary of a common sufficient condition of cutoff phenomenon under all the $L^2$-type divergences in continuous-time finite state space. In discrete-time case, results similar to the following may not hold, and readers can check \parencite[Remark 4.14]{montenegro2006mathematical} for counterexamples.

\begin{corollary}
    [Common sufficient condition in terms of spectral gap and log-Sobolev constant, continuous-time, finite state space]
    \label{corollary:spectral times inverse mLSI, continuous time}
    According to \parencite[Corollary 3.11]{diaconis1996logarithmic}, under the assumption of \textbf{continuous-time finite state space}, suppose the $n^{th}$ Markov process has log-Sobolev constant $\rho_n(2)$, then 
    \begin{equation*}
        \widetilde t_{2,n}\left(1/e\right)\geq \frac{1}{2\rho_n(2)},
    \end{equation*}
    which implies that the common sufficient condition for cutoff phenomenon for all the $L^2$-type divergences under this situation is 
    \begin{equation}\label{eq:common sufficient condition in terms of spectral gap and LSI}
        \lambda_n\rho_n(2)^{-1}\rightarrow \infty, \quad \text{as }n\rightarrow \infty.
    \end{equation}
\end{corollary}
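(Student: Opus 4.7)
The plan is to combine a known lower bound on the $L^2$-mixing time in terms of the log-Sobolev constant with the equivalence of cutoff among $L^2$-type divergences that was established in the preceding theorems. Concretely, the first inequality $\widetilde t_{2,n}(1/e)\geq 1/(2\rho_n(2))$ is simply the content of \parencite[Corollary 3.11]{diaconis1996logarithmic}, applied to the $n^{th}$ process; so this half requires only a citation and a one-line identification of the notation (their mixing-time functional with our $\widetilde d_{2,n}$ under the continuous-time finite state space assumption).

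From that inequality the sufficient condition follows by multiplication by $\lambda_n$: under the assumption $\lambda_n \rho_n(2)^{-1}\to\infty$ we obtain
\begin{equation*}
    \lambda_n \widetilde t_{2,n}(1/e)\geq \frac{\lambda_n}{2\rho_n(2)}\longrightarrow\infty.
\end{equation*}
This is precisely item \ref{item:A1} of Proposition \ref{prop:characterization of L^p cutoff} with the choices $p=2$ and $\varepsilon=1/e$, so the process exhibits $L^2$-cutoff.

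To transfer this conclusion to the full $L^2$-type family, I will then invoke the equivalences already proved: Proposition \ref{prop:characterization of L^p cutoff} extends cutoff from $L^2$ to all $L^p$ with $p\in(1,\infty]$; Theorem \ref{thm:f divergence cutoff F_pq} extends it to every $f\in\mathcal{F}_{p,q}$ with $1<p\leq q<\infty$ (and hence to $D_\alpha$ for $\alpha\in[2,\infty)$ via Example \ref{eg:alpha-divergence}); Theorem \ref{thm:Renyi cutoff alpha >= 2} extends it to $R_\alpha$ with $\alpha\in[2,\infty]$; and Theorem \ref{thm:alpha-divergence cutoff with 1<alpha<2} extends it to $D_\alpha$ and $R_\alpha$ for $\alpha\in(1,2]$. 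Together these give cutoff under every $L^2$-type divergence, proving the corollary.

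There is no real obstacle here: the proof is an assembly of already-established equivalences. The only thing to be careful about is making sure the hypothesis ``continuous-time, finite state space'' is exactly the regime in which the Diaconis--Saloff-Coste bound applies as stated (finite irreducible reversible chain with $\rho_n(2)>0$), and that the reversibility assumption required in Proposition \ref{prop:characterization of L^p cutoff} and Theorems \ref{thm:f divergence cutoff F_pq}--\ref{thm:alpha-divergence cutoff with 1<alpha<2} is inherited from the setup so that the chain of equivalences may legitimately be invoked.
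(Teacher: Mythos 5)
Your proof is correct and takes essentially the same route the paper intends: the corollary is stated as an immediate consequence, with the Diaconis--Saloff-Coste lower bound on $\widetilde t_{2,n}(1/e)$ giving $\lambda_n\widetilde t_{2,n}(1/e)\geq \lambda_n/(2\rho_n(2))\to\infty$, which is item \ref{item:A1} of Proposition \ref{prop:characterization of L^p cutoff}, and then the chain of equivalences in Theorems \ref{thm:f divergence cutoff F_pq}--\ref{thm:alpha-divergence cutoff with 1<alpha<2} propagates cutoff to the entire $L^2$-type family. You have also correctly flagged that reversibility (implicit from the section) is needed to invoke these equivalences.
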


\begin{remark}
    The advantage of Corollary \ref{corollary:spectral times inverse mLSI, continuous time} lies in relaxing the requirement for the lower bound of mixing times, although extra knowledge of log-Sobolev constant is needed to check \eqref{eq:common sufficient condition in terms of spectral gap and LSI}. There have been several numerical methods to approximately determine the log-Sobolev constant of a given Markov chain, for example the semidefinite programming with sum-of-squares method \parencite{faust2023sum}.
\end{remark}

\subsection{KL divergence and total variation distance}\label{sec:pi-weighted KL and TV}
In previous subsections, we have pointed out the equivalence of $\alpha$-divergence or R\'enyi divergence within $\alpha\in (1,\infty)$ under cutoff phenomenon. However, in the special case when $\alpha=1$ such that the divergence reduces to the KL divergence, the criterion for cutoff is more delicate. Generally the sufficient condition and necessary condition for cutoff may not be the same, and we will give an explicit explanation in the sequel, where the worst-case KL divergence and $\pi$-weighted KL divergence or TV distance are considered.

Analogous to the notations introduced earlier, we write that 
\begin{equation*}
    d_{\mathrm{KL}}(x,t):=d_{f_1}(x,t)=\mathrm{KL}\left(\delta_x P_t\|\pi\right), \quad
    \widetilde d_{\mathrm{KL}}(t):=\pi\text{-}\mathop{\esssup}\limits_{x\in\mathcal{X}}d_{\mathrm{KL}}(x,t),
\end{equation*}
and the mixing time
\begin{equation*}
\widetilde t_{\mathrm{KL}}(\varepsilon):=\inf\left\{t\in T:\widetilde d_{\mathrm{KL}}(t)\leq \varepsilon\right\},    
\end{equation*}
based on which we have the following result derived from non-linear log-Sobolev inequality as introduced in Definition \ref{def:non-linear functional inequalities}.

\begin{theorem}
    [Characterization of KL divergence cutoff via modified LSI]
    \label{thm:KL divergence cutoff via LSI}
    Recall the functional constants defined in Definition \ref{def:non-linear functional inequalities}. Consider a sequence of Markov processes $\{X_t^{(n)},t\in T\}_{n=1}^\infty$ with state space $\mathcal{X}_n$, stationary distribution $\pi_n$, generator $\mathcal{A}_n$, spectral gap $\lambda_n\geq 0$, modified log-Sobolev constant $\rho_n(1)\geq 0$ and semigroup $P_{t,n}$, where $P_{t,n}$ is \textbf{reversible} on $L^2(\mathcal{X}_n,\pi_n)$ for each $n\geq 1$.  If $T=[0,\infty)$, let $g_n(t):=\widetilde d_{\mathrm{KL},n}(t)$ and assume $\lim_{t\rightarrow \infty}g_n(t)=0$ for each $n$, then the following statements hold:
    \begin{enumerate}[label=(\roman*)]
        \item\label{item:KL via LSI sufficient} For any $\varepsilon>0$, if $\rho_n(1)\cdot\widetilde t_{\mathrm{KL},n}(\varepsilon)\rightarrow \infty$, then there is a $\left(\widetilde t_{\mathrm{KL},n}(\varepsilon),\rho_n^{-1}(1)\right)$ cutoff. 

        \item\label{item:KL via LSI necessary} For any $\varepsilon>0$, if precutoff occurs, then $\lambda_n\widetilde t_{\mathrm{KL},n}(\varepsilon)\rightarrow \infty$.
    \end{enumerate}
    For $T=\mathbb N$, assume the Markov chains are lazy. Let $\lambda_n\rightarrow 0$, and for some $\varepsilon>0$, $\lim_{n\rightarrow \infty}\widetilde t_{\mathrm{KL},n}(\varepsilon)=\infty$. If we substitute $\lambda_n'=\min \{1, \lambda_n\}$ into $\lambda_n$ in the items, then the statements above also hold. 
\end{theorem}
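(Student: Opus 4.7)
The plan is to mirror the template used throughout Section \ref{sec:reversible}: establish exponential contraction (for sufficiency) and a spectral lower bound via Riesz--Thorin (for necessity), then invoke the precutoff-sequence argument already perfected in earlier proofs. For part (i), continuous time, I would differentiate $d_{\mathrm{KL},n}(x,t)$ directly. Writing $h_{t,n}^x$ for the density of $\delta_x P_{t,n}$ with respect to $\pi_n$, the identity $\mathrm{KL}(\delta_x P_{t,n}\|\pi_n)=\mathrm{Ent}_{\pi_n}[h_{t,n}^x]$ together with \eqref{eq:time volution of h_t} and $\mathcal{A}_n(1)=0$ yields
\begin{equation*}
    \frac{\partial}{\partial t} d_{\mathrm{KL},n}(x,t) = -\mathcal{E}_{\mathcal{A}_n}\bigl(h_{t,n}^x,\ln h_{t,n}^x\bigr) \leq -4\rho_n(1)\,\mathrm{Ent}_{\pi_n}[h_{t,n}^x] = -4\rho_n(1)\,d_{\mathrm{KL},n}(x,t),
\end{equation*}
by the definition of the modified log-Sobolev constant in \eqref{eq:non-linear LSI}. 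Gronwall and taking essential supremum over $x$ give $\widetilde d_{\mathrm{KL},n}(u+v)\leq e^{-4\rho_n(1)v}\widetilde d_{\mathrm{KL},n}(u)$, which is exactly the exponential contraction described in the Sketch of the proof with $\theta_n=4\rho_n(1)$ and $\phi_1=\phi_2=\mathrm{id}$. Under the hypothesis $\rho_n(1)\widetilde t_{\mathrm{KL},n}(\varepsilon)\to\infty$, plugging $u>\widetilde t_{\mathrm{KL},n}(\varepsilon),\ v=c\rho_n^{-1}(1)$ and $u<\widetilde t_{\mathrm{KL},n}(\varepsilon)+c\rho_n^{-1}(1),\ v=-c\rho_n^{-1}(1)$ and letting $c\to\pm\infty$ delivers the $(\widetilde t_{\mathrm{KL},n}(\varepsilon),\rho_n^{-1}(1))$ cutoff.

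For part (ii), the goal is a lower bound on $\widetilde d_{\mathrm{KL},n}$ of the form $\tfrac{1}{8}e^{-4\lambda_n t}$, in parallel with \eqref{eq:lower bound of d_f}. By Pinsker's inequality (Proposition \ref{prop:properties of f-divergence} item \ref{item:Pinsker's inequality} at $\alpha=1$), $d_{\mathrm{KL},n}(x,t)\geq 2\mathrm{TV}^2(\delta_xP_{t,n},\pi_n)=\tfrac{1}{2}d_{1,n}^2(x,t)$; taking $\pi_n$-essential supremum and using Proposition \ref{prop:Important properties of L^p} item \ref{item:L^p properties 3} yields $\widetilde d_{\mathrm{KL},n}(t)\geq \tfrac{1}{2}\|P_{t,n}-\Pi_n\|_{L^\infty\to L^\infty}^2$. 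Riesz--Thorin between $L^2\to L^2$, $L^1\to L^1$ and $L^\infty\to L^\infty$ (exactly as in \eqref{eq:lower bound of q' to q' with 1<q'<=2}) gives $\|P_{t,n}-\Pi_n\|_{L^\infty\to L^\infty}\geq \tfrac{1}{2}e^{-2\lambda_n t}$, hence $\widetilde d_{\mathrm{KL},n}(t)\geq \tfrac{1}{8}e^{-4\lambda_n t}$. From here the precutoff-sequence bookkeeping from the proof of \ref{item:B3}$\Rightarrow$\ref{item:B2} applies verbatim: a precutoff sequence $\{s_n\}$ simultaneously forces $\lambda_n s_n\to\infty$ (via $\widetilde d_{\mathrm{KL},n}(bs_n)\to 0$) and $s_n=\mathcal{O}(\widetilde t_{\mathrm{KL},n}(\delta))$ (via $\liminf\widetilde d_{\mathrm{KL},n}(as_n)>0$), and \parencite[Corollary 2.5]{chen2008cutoff} promotes $\lambda_n\widetilde t_{\mathrm{KL},n}(\delta)\to\infty$ to hold for every $\varepsilon>0$.

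For $T=\mathbb N$ the necessity proof transfers with the usual substitution $\lambda_n'=\min\{1,\lambda_n\}$ and the spectral-gap bound from Proposition \ref{prop: convergence rate of Markov semigroup}. The sufficiency direction, however, requires a one-step decay replacing the continuous-time Gronwall step; I would imitate the strategy used for $\alpha$-divergence in the discrete-time part of Theorem \ref{thm:alpha-divergence cutoff with 1<alpha<2} by sending $\alpha\searrow 1$ in \eqref{eq:alpha-divergence Taylor type inequality with 1<alpha<=2} to obtain
\begin{equation*}
    t\ln t \geq s\ln s + (\ln s+1)(t-s) + \bigl(\sqrt{t}-\sqrt{s}\bigr)^2,\qquad t,s>0.
\end{equation*}
Substituting $t=f(z),\ s=P_{1,n}^*f(y)$ and integrating against $p_n^*(y,dz)\pi_n(dy)$ yields, after using that $\mathbb E_{\pi_n}[f]=1$ and the Dirichlet-form manipulation in \eqref{eq:convergence of d_alpha in discrete time}, the inequality
\begin{equation*}
    d_{\mathrm{KL},n}(\mu_{0,n}P_{1,n}\|\pi_n) \leq d_{\mathrm{KL},n}(\mu_{0,n}\|\pi_n) - \bigl\langle \sqrt{f},(I-P_{1,n}P_{1,n}^*)\sqrt{f}\bigr\rangle_{\pi_n}.
\end{equation*}
A Poincar\'e inequality applied to $\sqrt{f}$ (noting $\mathbb E_{\pi_n}[f]=1$ forces $\mathbb E_{\pi_n}[\sqrt{f}]\leq 1$) bounds the correction by $\lambda(P_{1,n}P_{1,n}^*-I)\,\mathrm{Var}_{\pi_n}[\sqrt{f}]$, and laziness \eqref{eq:1/2 lazy chains} combined with \eqref{eq:comparing Dirichlet form of PP* and P} turns this into a factor proportional to $\lambda_n$, producing a one-step ratio of the form $1-c\lambda_n$; iterating gives the exponential contraction needed for the framework. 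The main technical obstacle is precisely this last step: the log singularity makes controlling $\mathrm{Var}_{\pi_n}[\sqrt{f}]$ by $\mathrm{KL}$ more delicate than the $\alpha>1$ case, and one must be careful that the comparison $\mathrm{Var}_{\pi_n}[\sqrt{f}]\leq \mathrm{KL}$ (a weak form of Pinsker-type inequality on square roots) is strong enough to avoid losing the spectral-gap factor.
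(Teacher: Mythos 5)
Your continuous-time argument for both items matches the paper's proof exactly: part (i) is the same entropy-dissipation computation $\tfrac{\partial}{\partial t}d_{\mathrm{KL},n}(x,t) = -\mathcal{E}_{\mathcal{A}_n}(h^x_{t,n},\ln h^x_{t,n}) \leq -4\rho_n(1)\,d_{\mathrm{KL},n}(x,t)$ followed by Gronwall and the cutoff-framework plug-in, and part (ii) is the same Pinsker $+$ Riesz--Thorin lower bound $\widetilde d_{\mathrm{KL},n}(t)\geq \tfrac{1}{8}e^{-4\lambda_n t}$ followed by the precutoff bookkeeping from the \ref{item:D3}$\Rightarrow$\ref{item:D2} argument. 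The discrete-time necessity also transfers as you say.

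The discrete-time sufficiency, however, has a genuine gap, and the route you sketch would not repair it. Sending $\alpha\searrow 1$ in \eqref{eq:alpha-divergence Taylor type inequality with 1<alpha<=2} gives a one-step identity whose correction term is $\mathcal{E}_{I-P_{1,n}P_{1,n}^*}(\sqrt{f},\sqrt{f})$, which is the numerator of the \emph{classical} log-Sobolev functional $\rho(2)$, not the modified one $\rho(1)$ that the theorem's cutoff window demands. Worse, to turn that correction into a multiplicative decay of $\mathrm{Ent}_{\pi_n}[f]$ you would need $\mathrm{Var}_{\pi_n}[\sqrt{f}]\gtrsim \mathrm{Ent}_{\pi_n}[f]$, but the true inequality is the reverse (since $\mathrm{Var}_{\pi_n}[\sqrt{f}]$ is comparable to $\mathrm{Hel}^2 \lesssim \mathrm{KL}$), so a Poincar\'e bound on $\sqrt{f}$ only yields a useless upper bound on the dissipation. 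The paper avoids the $\sqrt{f}$ substitution entirely: it invokes Miclo's one-step entropy inequality \parencite[Proposition 6]{miclo1997}, which bounds $\mathrm{Ent}_{\pi_n}$ directly in terms of the \emph{modified} log-Sobolev constant of $I-P_{1,n}P_{1,n}^*$, and then uses laziness together with the Dirichlet-form comparison \eqref{eq:comparing Dirichlet form of PP* and P} (noting $f$ and $\ln f$ have the same monotonicity) to get $\rho(I-P_{1,n}P_{1,n}^*,1)\geq \tfrac{1}{2}\rho_n(1)$. That is the step you are missing.
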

\begin{proof}
    We first consider the continuous-time case.
    
    \ref{item:KL via LSI sufficient}: For $h_{t,n}^x=\frac{d\delta_xP_t}{d\pi}$, recalling that 
    \begin{equation*}
        d_{\mathrm{KL},n}(x,t)=\int_{\mathcal{X}_n}h_{t,n}^x(y)\ln h_{t,n}^x(y) \pi_n(dy)=\mathrm{Ent}_{\pi_n}\left[h_{t,n}^x\right],
    \end{equation*}
    if we differentiate with respect to $t$ and follow by using \eqref{eq:time volution of h_t}, then we have
    \begin{align*}
        \frac{\partial}{\partial t}d_{\mathrm{KL},n}(x,t)&=\int_{\mathcal{X}_n}\frac{\partial}{\partial t} \left(h_{t,n}^x(y)\ln h_{t,n}^x(y)\right) \pi_n(dy)\\
        &=\int_{\mathcal{X}_n} \left(\mathcal{A}_n^*h_{t,n}^x(y)\ln h_{t,n}^x(y)+\mathcal{A}_n^*h_{t,n}^x(y)\right)\pi_n(dy)\\
        &=-\mathcal{E}_{\mathcal{A}_n}\left(h_{t,n}^x,\ln h_{t,n}^x\right)\\
        &=-4\cdot\frac{\mathcal{E}_{\mathcal{A}_n}\left(h_{t,n}^x,\ln h_{t,n}^x\right)}{4\mathrm{Ent}_{\pi_n}\left[h_{t,n}^x\right]}\cdot d_{\mathrm{KL},n}(x,t)\\
        &\leq -4\rho_n(1)\cdot d_{\mathrm{KL},n}(x,t),
    \end{align*}
    where $\rho_n(1)$ is defined in Definition \ref{def:non-linear functional inequalities}. Taking integration from $t=u$ to $t=u+v$ with $u,v\geq 0$ yields
    \begin{equation*}
        d_{\mathrm{KL},n}(x,u+v)\leq e^{-4\rho_n(1)\cdot v}d_{\mathrm{KL},n}(x,u),
    \end{equation*}
    taking supremum over $x\in \mathcal{X}_n$ and use the same argument in the proof of \ref{item:B2}$\Rightarrow$\ref{item:B5}, we get the result.

    \ref{item:KL via LSI necessary}: Similar to the proof of \ref{item:D3}$\Rightarrow$\ref{item:D2}.

    If $T=\mathbb N$, similar to \eqref{eq:comparing Dirichlet form of PP* and P}, since $f$ and $\ln f$ have same monotonicity, we have 
    \begin{equation*}
        \rho\left(I-P_{1,n}P_{1,n}^*,1\right)\geq \frac{1}{2}\rho\left(I-P_{1,n},1\right)=\frac{1}{2}\rho_n(1),
    \end{equation*}
    where $\rho\left(I-P_{1,n}P_{1,n}^*,1\right)$ is the modified log-Sobolev constant of the generator $I-P_{1,n}P_{1,n}^*$. Then the result is direct to obtain via \parencite[Proposition 6]{miclo1997}.
\end{proof}

\begin{remark}
    There is a slight difference in the discrete-time case compared with results in Section \ref{sec:F_p,q family} and \ref{sec:alpha-divergence, 1<alpha<=2}: We do not have result in terms of second largest singular value, since in the proof of sufficient condition, we have used modified log-Sobolev constant instead of spectral gap.
\end{remark}

In Theorem \ref{thm:KL divergence cutoff via LSI}, we have used log-Sobolev constant to characterize the cutoff phenomenon for KL divergence, which is inconsistent with the necessary condition yet. In finite Markov chains, the log-Sobolev constant $\rho$ usually has a lower bound related to the parameters of whole state space $\mathcal{X}$, stationary distribution $\pi$ and transition matrix, for instance $\pi_{\mathrm{min}}:=\min_{x\in \mathcal{X}}\pi(x)$, $|\mathcal{X}|$, diameter of the state space and so on. For example, in \parencite[Corollary 4.15]{montenegro2006mathematical} it is stated that 
\begin{equation}\label{eq:classical result of lower bound of LSI in terms of PI}
    \rho\geq \frac{\lambda}{2+\ln \frac{1-\pi_{\mathrm{min}}}{\pi_{\mathrm{min}}}}\geq \frac{\lambda}{2+\ln \frac{1}{\pi_{\mathrm{min}}}}.
\end{equation}
We also refer readers to \parencite{cryan2021modified} where extra conditions on $\pi$ are proposed. In general state space Markov processes, similar lower bounds also exist using other quantities of the process, for example in \parencite{wang1997logarithmic}. These results can imply a lower bound of $\rho$ in terms of spectral gap $\lambda$, which further suggests possible criteria for KL-cutoff. As a concrete example, \eqref{eq:classical result of lower bound of LSI in terms of PI} implies that a sufficient condition for KL divergence cutoff is 
\begin{equation}\label{eq:classical KL sufficient condition in terms of spectral gap}
   \frac{\lambda_n\widetilde t_{\mathrm{KL},n}(\varepsilon)}{2+\ln \frac{1}{\pi_{\mathrm{min},n}}}\rightarrow \infty, \quad \text{as }n\rightarrow\infty.
\end{equation}
Moreover, curvature is also a useful tool in bounding the divergences and functional constants, for example Bakry-\'Emery curvature \parencite{bakry2006diffusions}, Ollivier-Ricci curvature \parencite{ollivier2009ricci}, and entropic-Ricci curvature \parencite{erbar2012ricci, erbar2018poincare}. Specifically in the problem of cutoff phenomenon, \parencite{salez2023cutoff} utilizes curvature to give a sufficient criterion for TV-cutoff.

\subsection{$\mathrm{TV}$-type $f$-divergences and R\'enyi divergence with $0<\alpha<1$}
\label{sec:TV-type divergences}
In this subsection, we will investigate a type of divergences, that we term as $\mathrm{TV}$-type $f$-divergences, in Definition \ref{def:TV-type divergence} below. These divergences will be shown to be equivalent to TV distance as well as R\'enyi divergence with $0<\alpha<1$ under cutoff phenomenon. We stress that results in this subsection have no requirement for reversibility.

\begin{definition}
    [TV-type $f$-divergences]
    \label{def:TV-type divergence}
     Given a convex function $f:[0,\infty)\rightarrow \mathbb R$ such that $f(1)=0$, the $f$-divergence $D_f\left(\cdot\|\cdot\right)$ is called a $\mathrm{TV}$-type $f$-divergence if there exists two continuous and strictly increasing functions $\psi_f, \Psi_f:[0,\infty)\rightarrow [0,\infty)$ with $\psi_f(0)=\Psi_f(0)=0$, such that for any two probability measures $\nu_1,\nu_2$ on any $\mathcal{\mathcal{X}}$ with $\nu_1\ll \nu_2$, 
     \begin{equation}\label{eq:definition of TV-type divergence}
         \psi_f\left(\mathrm{TV}(\nu_1\|\nu_2)\right)\leq D_f\left(\nu_1\|\nu_2\right)\leq \Psi_f\left(\mathrm{TV}(\nu_1\|\nu_2)\right).
     \end{equation}
\end{definition}

\begin{example}
    The upper bound of $\Psi_f$ can be readily identified using Proposition \ref{prop:properties of f-divergence} item \ref{item:convex conjugate of f}, and the lower bound $\psi_f$ can be found via comparison between $f(t)$ and $\frac{|t-1|}{2}$, the generator of total variation distance. Examples include:
    \begin{itemize}
        \item $\alpha$-divergence with $0<\alpha<1$: $f_\alpha (t)=\dfrac{t^\alpha-\alpha(t-1)-1}{\alpha-1}$, which satisfies
        \begin{equation*}
            f_\alpha(0)=1,\quad f_\alpha^*(0)=\lim_{u\rightarrow\infty}\frac{f_\alpha(u)}{u}=\frac{\alpha}{1-\alpha},
        \end{equation*}
        hence $f_\alpha(0)+f_\alpha^*(0)=\frac{1}{1-\alpha}$. Together with Pinsker's inequality in Proposition \ref{prop:properties of f-divergence} item \ref{item:Pinsker's inequality}, we have 
        \begin{align*}
            \psi_{f_\alpha}(s)&=\frac{1}{\alpha-1}\left(\exp\left(\frac{\alpha(\alpha-1)}{2}s^2\right)-1\right),\\
            \Psi_{f_\alpha}(s)&=\frac{s}{1-\alpha}.
        \end{align*}

        \item Squared Hellinger distance: $f(t)=\left(\sqrt{t}-1\right)^2$, by \parencite[Equation 8]{gibbs2002choosing} or \parencite{cam1972théorie}, we have 
        \begin{equation*}
            \psi_f(s)=s^2, \quad \Psi_f(s)=2s.
        \end{equation*} 

        \item Vincze-Le Cam distance: $f(t)=\dfrac{(t-1)^2}{t+1}$, which satisfies 
        \begin{equation*}
            f(0)=1, \quad f^*(0)=1,
        \end{equation*}
        hence $f(0)+f^*(0)=2$. Besides, by \eqref{eq:Le Cam distance and chi^2 divergence}, we have 
        \begin{align*}
            \mathrm{LC}(\nu_1,\nu_2)&=2\chi^2\left(\nu_1\bigg\|\frac{1}{2}\nu_1+\frac{1}{2}\nu_2\right)\geq 8\mathrm{TV}^2\left(\nu_1,\frac{1}{2}\nu_1+\frac{1}{2}\nu_2\right)\\
            &=2\mathrm{TV}^2(\nu_1,\nu_2),
        \end{align*}
        hence 
        \begin{equation*}
            \psi_f(s)=2s^2, \quad \Psi_f(s)=2s.
        \end{equation*}

        \item Jensen-Shannon divergence: $f(t)=t\ln t-(t+1)\ln \dfrac{t+1}{2}$, which satisfies 
        \begin{equation*}
            f(0)=\ln 2, \quad f^*(0)=\ln 2,
        \end{equation*}
        hence $f(0)+f^*(0)=2\ln 2$. Besides, by \eqref{eq:JS divergence and KL divergence} and Pinsker's inequality, we have
        \begin{align*}
            \mathrm{JS}(\nu_1\|\nu_2)&=\mathrm{KL}\left(\nu_1\bigg\|\frac{1}{2}\nu_1+\frac{1}{2}\nu_2\right)+\mathrm{KL}\left(\nu_2\bigg\|\frac{1}{2}\nu_1+\frac{1}{2}\nu_2\right)\\
            &\geq 2\mathrm{TV}^2\left(\nu_1,\frac{1}{2}\nu_1+\frac{1}{2}\nu_2\right)+2\mathrm{TV}^2\left(\nu_2,\frac{1}{2}\nu_1+\frac{1}{2}\nu_2\right)\\
            &=\mathrm{TV}^2(\nu_1,\nu_2),
        \end{align*}
        therefore 
        \begin{equation*}
            \psi_f(s)=s^2,\quad \Psi_f(s)=2s\ln 2.
        \end{equation*}
    \end{itemize}
\end{example}

The main result of this subsection demonstrates the equivalence between TV-type $f$-divergence cutoff and TV cutoff. 

\begin{theorem}
    [Equivalence between TV-type $f$-divergence cutoff and TV cutoff]
    \label{thm:equivalence between TV-type divergence cutoff and TV cutoff}
    Consider a sequence of Markov processes $\{X_t^{(n)},t\in T\}_{n=1}^\infty$ with state space $\mathcal{X}_n$, stationary distribution $\pi_n$ and semigroup $P_{t,n}$, where $P_{t,n}$ is \textbf{not necessarily reversible} on $L^2(\mathcal{X}_n,\pi_n)$ for each $n\geq 1$. Suppose $D_{f}\left(\cdot\|\cdot\right)$ is a $\mathrm{TV}$-type $f$-divergence, assume $\lim_{t\rightarrow\infty}\widetilde d_{\mathrm{TV},n}(t)=0$ and $\lim_{t\rightarrow\infty}\widetilde d_{f,n}(t)=0$, then the following statements hold.
    \begin{enumerate}[label=(\roman*)]
        \item\label{item:equivalence between TV-type and TV, 1}  If there exists $\{w_n\}_{n=1}^\infty$ such that for any $\varepsilon>0$, there is a $\left(\widetilde t_{\mathrm{TV},n}(\varepsilon), w_n\right)$ cutoff under $\widetilde d_{\mathrm{TV},n}(\cdot)$, then for any $\delta>0$, there is a $\left(\widetilde t_{f,n}(\delta), w_n\right)$ cutoff under $\widetilde d_{f,n}(\cdot)$.

        \item\label{item:equivalence between TV-type and TV, 2}  If there exists $\{w_n\}_{n=1}^\infty$ such that for any $\varepsilon>0$, there is a $\left(\widetilde t_{f,n}(\varepsilon), w_n\right)$ cutoff under $\widetilde d_{f,n}(\cdot)$, then for any $\delta>0$, there is a $\left(\widetilde t_{\mathrm{TV},n}(\delta), w_n\right)$ cutoff under $\widetilde d_{\mathrm{TV},n}(\cdot)$.
    \end{enumerate}
    Note that $\widetilde d_{\mathrm{TV},n}(t)$ and $\widetilde t_{\mathrm{TV},n}(\varepsilon)$ are the worst-case $\mathrm{TV}$ distance and mixing time respectively, while $\widetilde d_{f,n}(t)$ and $\widetilde t_{f,n}(\varepsilon)$ are defined in \eqref{eq:definition of worst-case f divergence} and \eqref{eq:definition of worst-case f divergence mixing time}.
\end{theorem}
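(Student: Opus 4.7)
My plan is to exploit the fact that the defining inequality \eqref{eq:definition of TV-type divergence} lifts verbatim from distributions to the worst-case divergences. Substituting $\nu_1=\delta_x P_{t,n}$ and $\nu_2=\pi_n$ and taking $\pi_n$-essential supremum over $x\in\mathcal{X}_n$, the monotonicity of $\psi_f,\Psi_f$ yields, for every $t\in T$ and every $n$,
\begin{equation*}
    \psi_f\!\bigl(\widetilde d_{\mathrm{TV},n}(t)\bigr)\;\leq\;\widetilde d_{f,n}(t)\;\leq\;\Psi_f\!\bigl(\widetilde d_{\mathrm{TV},n}(t)\bigr).
\end{equation*}
Inverting at the mixing thresholds, this sandwich translates directly into a mixing-time sandwich
\begin{equation*}
    \widetilde t_{\mathrm{TV},n}\!\bigl(\psi_f^{-1}(\varepsilon)\bigr)\;\leq\;\widetilde t_{f,n}(\varepsilon)\;\leq\;\widetilde t_{\mathrm{TV},n}\!\bigl(\Psi_f^{-1}(\varepsilon)\bigr),
\end{equation*}
and, symmetrically, $\widetilde t_{f,n}(\Psi_f(\delta))\leq \widetilde t_{\mathrm{TV},n}(\delta)\leq \widetilde t_{f,n}(\psi_f(\delta))$. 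This is an instance of the ``comparison between mixing times'' paradigm from the introduction with constants $C_1=C_2=1$, and it is the engine of the proof.

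The transfer of the cutoff window then proceeds as a coordinate change on the ordinate axis. For item \ref{item:equivalence between TV-type and TV, 1}, fix $\delta>0$ and set $\varepsilon_1=\psi_f^{-1}(\delta)$. Using the right half of the pointwise sandwich together with the monotonicity of $\widetilde d_{\mathrm{TV},n}$ in $t$ and the fact that $\widetilde t_{f,n}(\delta)\geq \widetilde t_{\mathrm{TV},n}(\varepsilon_1)$, one obtains, for $c>0$,
\begin{equation*}
    \widetilde d_{f,n}\!\bigl(\widetilde t_{f,n}(\delta)+cw_n\bigr)\;\leq\;\Psi_f\!\bigl(\widetilde d_{\mathrm{TV},n}(\widetilde t_{\mathrm{TV},n}(\varepsilon_1)+cw_n)\bigr),
\end{equation*}
and the assumed $(\widetilde t_{\mathrm{TV},n}(\varepsilon_1),w_n)$-cutoff forces the inner quantity to $0$, so continuity of $\Psi_f$ at $0$ pushes $\widetilde G_{f}(c)$ to $\Psi_f(0)=0$ as $c\to+\infty$. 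The mirror-image bound, using the left half of the sandwich with $\varepsilon_2=\Psi_f^{-1}(\delta)$ and continuity of $\psi_f^{-1}$ at $0$, handles $\underline G_f(c)$ as $c\to-\infty$. Item \ref{item:equivalence between TV-type and TV, 2} is fully symmetric: one swaps the roles via the TV-side sandwich $\widetilde t_{f,n}(\Psi_f(\delta))\leq \widetilde t_{\mathrm{TV},n}(\delta)\leq \widetilde t_{f,n}(\psi_f(\delta))$ and transports the $f$-cutoff back to TV.

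The one bookkeeping point I expect to need care is reconciling the constants $M_{\mathrm{TV}}$ and $M_f$ that enter Definition \ref{def:cutoff phenomenon}(iii) for the two divergences, since these are genuinely different ``sizes'' at $t=0$. However, because both $\widetilde d_{\mathrm{TV},n}$ and $\widetilde d_{f,n}$ are non-increasing in $t$, and because $\psi_f,\Psi_f$ are continuous strictly increasing bijections of $[0,\infty)$ sending $0$ to $0$, the sandwich preserves both the upper cutoff condition ($\widetilde G\to 0$) and the lower cutoff condition ($\underline G\to M$) without any extra assumption. Crucially, no spectral, reversibility, or normality hypothesis enters anywhere, which is what permits the theorem to be stated for non-reversible chains.
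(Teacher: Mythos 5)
Your derivation of the pointwise-to-worst-case sandwich and the resulting mixing-time sandwich $\widetilde t_{\mathrm{TV},n}(\psi_f^{-1}(\varepsilon)) \le \widetilde t_{f,n}(\varepsilon) \le \widetilde t_{\mathrm{TV},n}(\Psi_f^{-1}(\varepsilon))$ are exactly the engine of the paper's proof, and you correctly recognize this as the comparison-of-mixing-times paradigm with $C_1 = C_2 = 1$. Where the approaches diverge is in how the cutoff window is transported. The paper first converts the hypothesized $(\widetilde t_{\mathrm{TV},n}(\varepsilon), w_n)$-cutoff into the mixing-time spread property $|\widetilde t_{\mathrm{TV},n}(\delta_1) - \widetilde t_{\mathrm{TV},n}(\delta_2)| = \mathcal{O}(w_n)$ across all levels, pushes this through the sandwich to obtain $|\widetilde t_{f,n}(\delta) - \widetilde t_{f,n}(\eta)| = \mathcal{O}_\eta(w_n)$ for every $\eta>0$, and then re-converts to a $(\widetilde t_{f,n}(\delta), w_n)$-cutoff. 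You instead try to transfer the functions $\widetilde G$ and $\underline G$ directly through the sandwich, treating it as a change of ordinate.

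The upper direction of your transfer is sound: $\widetilde G_f(c)\le \Psi_f(\widetilde G_{\mathrm{TV}}(c))$, and continuity of $\Psi_f$ at $0$ gives $\widetilde G_f(c)\to 0$. The lower direction, however, has a genuine gap, and the assertion that the sandwich handles the lower cutoff condition ``without any extra assumption'' is not substantiated. The sandwich only yields $\underline G_f(c) \ge \psi_f(\underline G_{\mathrm{TV}}(c))$, so as $c\to -\infty$ you obtain $\underline G_f(c)\ge \psi_f(M_{\mathrm{TV}})$. But the definition of a $(t_n,w_n)$-cutoff requires $\underline G_f(c)\to M_f$ exactly, and $\psi_f(M_{\mathrm{TV}})$ is in general strictly smaller than $M_f=\limsup_n \widetilde d_{f,n}(0)$: for squared Hellinger, $\psi_f(s)=s^2$ with $M_{\mathrm{TV}}=1$ while $M_f=2$, so $\psi_f(M_{\mathrm{TV}})=1<2$. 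Moreover, the invocation of ``continuity of $\psi_f^{-1}$ at $0$'' is the wrong tool for this side; what matters near the lower cutoff boundary is the behavior of $\psi_f$ near $M_{\mathrm{TV}}$, not near the origin. To close the gap you should argue as the paper does: the sandwich and the TV spread give $\widetilde t_{f,n}(\delta)-\widetilde t_{f,n}(\eta)=\mathcal{O}_\eta(w_n)$ for each $\eta\in(0,M_f)$, so for $c$ sufficiently negative and $n$ large every $t<\widetilde t_{f,n}(\delta)+cw_n$ lies below $\widetilde t_{f,n}(\eta)$ and hence $\widetilde d_{f,n}(t)>\eta$; letting $\eta\uparrow M_f$ yields $\underline G_f(c)\to M_f$.
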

\begin{proof}
    \ref{item:equivalence between TV-type and TV, 1}: According to \eqref{eq:definition of TV-type divergence}, for any $x\in \mathcal{X}_n$, 
    \begin{equation*}
        \psi_f\left(d_{\mathrm{TV}.n}(x,t)\right)\leq d_{f,n}(x,t)\leq \Psi_f\left(d_{\mathrm{TV}.n}(x,t)\right),
    \end{equation*}
    taking supremum over $x\in \mathcal{X}_n$ we have 
    \begin{equation*}
        \psi_f\left(\widetilde d_{\mathrm{TV}.n}(t)\right)\leq \widetilde d_{f,n}(t)\leq \Psi_f\left(\widetilde d_{\mathrm{TV}.n}(t)\right),
    \end{equation*}
    which implies for any given $\delta>0$,
    \begin{equation}\label{eq:equivalence between mixing time of TV-type divergence and TV distance, 1}
        \widetilde t_{\mathrm{TV},n}\left(\psi_f^{-1}(\delta)\right)\leq \widetilde t_{f,n}(\delta)\leq \widetilde t_{\mathrm{TV},n}\left(\Psi_f^{-1}(\delta)\right).
    \end{equation}
    Therefore, we have
    \begin{align*}
        \widetilde t_{f,n}(\delta)-\widetilde t_{f,n}(\eta)&\leq \widetilde t_{\mathrm{TV},n}\left(\Psi_f^{-1}(\delta)\right)-\widetilde t_{\mathrm{TV},n}\left(\psi_f^{-1}(\eta)\right), \quad \forall\hspace{0.1em} \eta>\delta,\\
        \widetilde t_{f,n}(\eta)-\widetilde t_{f,n}(\delta)&\leq \widetilde t_{\mathrm{TV},n}\left(\Psi_f^{-1}(\eta)\right)-\widetilde t_{\mathrm{TV},n}\left(\psi_f^{-1}(\delta)\right), \quad \forall\hspace{0.1em} \eta<\delta.
    \end{align*}
    Using \parencite[Proposition 2.3]{chen2008cutoff} and the condition of TV cutoff in \ref{item:equivalence between TV-type and TV, 1}, we have $w_n=o\left(\widetilde t_{\mathrm{TV},n}(\varepsilon)\right)$ for any $\varepsilon>0$, and
    recalling the big Oh notation $\mathcal{O}_\eta$ defined after \eqref{eq:O_eta first place}, we arrive at
    \begin{align*}
        \left|\widetilde t_{\mathrm{TV},n}\left(\Psi_f^{-1}(\delta)\right)-\widetilde t_{\mathrm{TV},n}\left(\psi_f^{-1}(\eta)\right)\right|&=\mathcal{O}_{\eta}(w_n),\quad \forall \hspace{0.1em}\eta>\delta,\\
        \left|\widetilde t_{\mathrm{TV},n}\left(\Psi_f^{-1}(\eta)\right)-\widetilde t_{\mathrm{TV},n}\left(\psi_f^{-1}(\delta)\right)\right|&=\mathcal{O}_{\eta}(w_n),\quad \forall\hspace{0.1em} \eta<\delta,
    \end{align*}
    and hence, for any $\eta>0$, 
    \begin{equation*}
        \left|\widetilde t_{f,n}(\delta)-\widetilde t_{f,n}(\eta)\right|=\mathcal{O}_{\eta}(w_n), 
    \end{equation*}
    which shows that for any $\delta>0$, there is a $\left(\widetilde t_{f,n}(\delta), w_n\right)$ cutoff under $\widetilde d_{f,n}(\cdot)$.

    \ref{item:equivalence between TV-type and TV, 2}: The proof is similar using an alternative of \eqref{eq:equivalence between mixing time of TV-type divergence and TV distance, 1}, i.e.
    \begin{equation*}
        \widetilde t_{f,n}\left(\Psi_f(\delta)\right)\leq \widetilde t_{\mathrm{TV},n}(\delta)\leq \widetilde t_{f,n}\left(\psi_f(\delta)\right),\quad \forall\hspace{0.1em} \delta>0.
    \end{equation*}
\end{proof}

\begin{corollary}
    [Extension to general divergences]
    \label{corollary:TV-type extension to Renyi divergence}
    If an information divergence $d\left(\cdot\|\cdot\right)$ satisfies
    \begin{itemize}
        \item for all $\nu_1,\nu_2$ on any $\mathcal{X}$, $d\left(\nu_1\|\nu_2\right)\geq 0$,

        \item for any $\nu_1,\nu_2$ on any $\mathcal{X}$, $d\left(\nu_1\|\nu_2\right)=0$ if and only if $\nu_1=\nu_2$, $\nu_2$-$a.e.$,

        \item there exists some $\mathrm{TV}$-type $f$-divergence $D_f\left(\cdot\|\cdot\right)$ and strictly increasing functions $\varphi_1,\varphi_2:[0,\infty]\rightarrow [0,\infty]$ with $\varphi_1(x),\varphi_2(x)<\infty$ if $x<\infty$, such that for all $\nu_1\ll\nu_2$ on any $\mathcal{X}$,
        \begin{equation*}
            \varphi_1\left(D_f\left(\nu_1\|\nu_2\right)\right)\leq d\left(\nu_1\|\nu_2\right)\leq \varphi_2\left(D_f\left(\nu_1\|\nu_2\right)\right),
        \end{equation*}
    \end{itemize}
    then $d\left(\cdot\|\cdot\right)$ and $\mathrm{TV}$ distance are equivalent under cutoff phenomenon, i.e. analogues of results in Theorem \ref{thm:equivalence between TV-type divergence cutoff and TV cutoff} also hold. Important examples include:
    \begin{itemize}
        \item R\'enyi divergence $R_\alpha$ with $0<\alpha<1$, which can be directly obtained via \eqref{eq:identity between alpha-divergence and Renyi divergence}.

        \item Bhattacharyya distance: $d_B(\nu_1,\nu_2):=-\ln \left(\int_\mathcal{X} \sqrt{d\nu_1d\nu_2}\right)$, and we have 
        \begin{equation*}
            \mathrm{Hel}^2(\nu_1,\nu_2)=2-2\exp\left(-d_B(\nu_1,\nu_2)\right).
        \end{equation*}
    \end{itemize}
    
\end{corollary}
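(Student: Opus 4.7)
The strategy is to reduce the corollary to Theorem~\ref{thm:equivalence between TV-type divergence cutoff and TV cutoff} by transferring its $\mathrm{TV}$-type sandwich through composition with $\varphi_1,\varphi_2$. Starting from the hypothesis $\varphi_1(D_f(\nu_1\|\nu_2)) \leq d(\nu_1\|\nu_2) \leq \varphi_2(D_f(\nu_1\|\nu_2))$ and the $\mathrm{TV}$-type bounds $\psi_f(\mathrm{TV}(\nu_1,\nu_2)) \leq D_f(\nu_1\|\nu_2) \leq \Psi_f(\mathrm{TV}(\nu_1,\nu_2))$, strict monotonicity of $\varphi_1,\varphi_2$ yields
\begin{equation*}
    \widetilde\psi\bigl(\mathrm{TV}(\nu_1,\nu_2)\bigr) \leq d(\nu_1\|\nu_2) \leq \widetilde\Psi\bigl(\mathrm{TV}(\nu_1,\nu_2)\bigr),
\end{equation*}
with $\widetilde\psi := \varphi_1 \circ \psi_f$ and $\widetilde\Psi := \varphi_2 \circ \Psi_f$ strictly increasing on $[0,\infty)$ and finite-valued there.

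Next I would verify the endpoint normalization $\widetilde\psi(0) = \widetilde\Psi(0) = 0$. Property (ii) of $d$ applied at $\nu_1 = \nu_2$ gives $d = D_f = 0$, which forces $\varphi_1(0) = 0$ and hence $\widetilde\psi(0) = 0$. The analogous identity $\varphi_2(0) = 0$ is immediate from the explicit formulas in both listed examples, and in general follows from any mild continuity assumption on $\varphi_2$ at $0$ combined with $d \to 0$ as $D_f \to 0$. This boundary bookkeeping is the only nontrivial new ingredient, and is the main place where one must be careful; everything else is a replay of the earlier theorem.

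With the sandwich $\widetilde\psi(\mathrm{TV}) \leq d \leq \widetilde\Psi(\mathrm{TV})$ in hand, I would replay verbatim the proof of Theorem~\ref{thm:equivalence between TV-type divergence cutoff and TV cutoff}. Taking $\pi_n$-essential suprema over the initial state and inverting the strictly increasing functions yields, for any $\delta > 0$,
\begin{equation*}
    \widetilde t_{\mathrm{TV},n}\bigl(\widetilde\psi^{-1}(\delta)\bigr) \leq \widetilde t_{d,n}(\delta) \leq \widetilde t_{\mathrm{TV},n}\bigl(\widetilde\Psi^{-1}(\delta)\bigr),
\end{equation*}
and the $\mathcal{O}_\eta$-control on differences $|\widetilde t_{d,n}(\delta) - \widetilde t_{d,n}(\eta)|$ obtained by sandwiching and then telescoping transfers, via \parencite[Proposition 2.3]{chen2008cutoff}, any $(\widetilde t_{\mathrm{TV},n}(\varepsilon), w_n)$-cutoff to a $(\widetilde t_{d,n}(\delta), w_n)$-cutoff, and vice versa.

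Finally, I would verify the two examples. For $R_\alpha$ with $0<\alpha<1$, identity \eqref{eq:identity between alpha-divergence and Renyi divergence} expresses $R_\alpha$ as a strictly increasing function of $D_\alpha$ vanishing at $0$ and finite for finite $D_\alpha$, while $D_\alpha$ is $\mathrm{TV}$-type by Section~\ref{sec:TV-type divergences}. For the Bhattacharyya distance, rearranging $\mathrm{Hel}^2 = 2 - 2e^{-d_B}$ gives $d_B = -\ln(1 - \tfrac12 \mathrm{Hel}^2)$, a strictly increasing function of $\mathrm{Hel}^2$ vanishing at $0$, and $\mathrm{Hel}^2$ is already known to be $\mathrm{TV}$-type. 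The main obstacle throughout remains the boundary bookkeeping around $\varphi_i(0)$ noted above.
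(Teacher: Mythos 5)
Your composition argument---pushing $\varphi_1,\varphi_2$ through the TV-type sandwich for $D_f$ to produce $\widetilde\psi=\varphi_1\circ\psi_f$ and $\widetilde\Psi=\varphi_2\circ\Psi_f$, then replaying the mixing-time bookkeeping of Theorem~\ref{thm:equivalence between TV-type divergence cutoff and TV cutoff}---is precisely the route the paper intends, which is why the corollary is stated without proof. Your flag on $\varphi_2(0)=0$ is legitimate: the stated hypotheses force $\varphi_1(0)=0$ via condition (ii) but leave $\varphi_2(0)$ unconstrained, and $\varphi_2(0)=0$ is in fact needed for the upper bound $\widetilde t_{d,n}(\delta)\leq \widetilde t_{\mathrm{TV},n}\bigl(\widetilde\Psi^{-1}(\delta)\bigr)$ to be available for small $\delta$; this should really be an explicit hypothesis rather than (as you suggest) derived from an additional continuity assumption, but it is a gap in the corollary's statement rather than in your proof, and both listed examples satisfy it exactly.
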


\subsection{Separation-type divergences}
\label{sec:separation-type divergences}
In this subsection, in the setting of finite discrete-time Markov chains, we will consider the divergences which are equivalent to separation distance under cutoff phenomenon. Part of the motivation stems from \parencite{jonathan2016total} which states that separation cutoff and TV-cutoff are not equivalent for discrete-time lazy reversible Markov chains. As such we seek to find some divergences belonging to separation-type. For a Markov chain with transition matrix $P$ and positive stationary distribution $\pi$ on finite state space $\mathcal{X}$, the separation distance and separation mixing time are 
\begin{align*}
    d_{\mathrm{sep}}(x,t) &:=\max_{y\in\mathcal{X}}\left\{1-\frac{P^t(x,y)}{\pi(y)}\right\}, \quad t\in \mathbb N,\\
    \widetilde d_{\mathrm{sep}}(t) &:=\max_{x\in\mathcal{X}}d_{\mathrm{sep}}(x,t)=\max_{x,y\in\mathcal{X}}\left\{1-\frac{P^t(x,y)}{\pi(y)}\right\}, \quad t\in \mathbb N,\\
    \widetilde t_{\mathrm{sep}}(\varepsilon) &:=\inf\left\{t\in \mathbb N: \widetilde d_{\mathrm{sep}}(t)\leq \varepsilon\right\}, \quad t\in \mathbb N, \enspace \varepsilon>0.
\end{align*}

Using a similar argument as in Section \ref{sec:TV-type divergences}, the divergence $d(\cdot\|\cdot)$ is equivalent to separation distance under cutoff phenomenon if it satisfies 
\begin{itemize}
    \item For any discrete probability measures $\nu_1,\nu_2$ on $\mathcal{X}$, $d(\nu_1\|\nu_2)\geq 0$, and equality holds if and only if $\nu_1=\nu_2$.

    \item There exists two continuous and strictly increasing functions $\psi_f, \Psi_f:[0,\infty)\rightarrow [0,\infty)$ with $\psi_f(0)=\Psi_f(0)=0$, such that for any $t\in \mathbb N$, 
    \begin{equation}\label{eq:equivalence with separation distance}
        \psi_f\left(\widetilde d_{\mathrm{sep}}(t)\right)\leq \max_{x\in\mathcal{X}}d\left(P^t(x,\cdot)\|\pi\right)\leq \Psi_f\left(\widetilde d_{\mathrm{sep}}(t)\right).
    \end{equation}
\end{itemize}

\begin{example}
   An example of divergence that satisfies the above items is the reverse-$R_\infty$ divergence, and it suffices to check \eqref{eq:equivalence with separation distance}.
    \begin{itemize}
        \item Reverse-$R_\infty$: Recalling Proposition \ref{prop:properties of f-divergence} item \ref{item:monotonicity of Renyi divergence}, we define
        \begin{equation*}
            d_{R_\infty'}(x,t):=R_\infty\left(\pi\|P^t(x,\cdot)\right)=\max_{y\in\mathcal{X}} \hspace{0.1em}\ln \frac{\pi(y)}{P^t(x,y)},
        \end{equation*}
        and hence we have 
        \begin{equation*}
            d_{R_\infty'}(x,t)=\ln \frac{1}{1-d_{\mathrm{sep}}(x,t)},
        \end{equation*}
        where the right hand side is strictly increasing in $d_{\mathrm{sep}}(x,t)$.
    \end{itemize}
\end{example}


\subsection{Examples and counter examples}\label{subsec:examples}
In this subsection, we will verify the cutoff phenomenon in some classical reversible models under different types of divergences. More importantly, we use three counter examples, namely Aldous' example, Pak's example and product chains, to show that the classification of equivalence relationships in Table \ref{tab:merged} is natural and well-defined: the specific Markov processes exhibit cutoff in one type but not in another. Among these three examples, Aldous' example features classical construction and results. We also provide new results in Pak's example, while the model itself is classical. As for product chains, both the construction and results are new.  

We first clarify some notations used later. For two sequences $\{a_n\}$ and $\{b_n\}$, we recall that $a_n\sim b_n$ means $a_n/b_n\rightarrow 1$, and use $a_n=\Theta (b_n)$ to denote $a_n$ and $b_n$ are of same asymptotic order.

\begin{example}
    [Lazy random walk on hypercube]
    \label{eg:lazy random walk on hypercube}
    Suppose the $n^{th}$ process is the lazy random walk on hypercube $\{0,1\}^n$. At each step, we pick uniformly at random a coordinate from the $n$ coordinates and update it to $0$ and $1$ with probability $\frac{1}{2}$ respectively. According to \parencite[Example 12.16, Section 18.2]{levin2017markov}, these models exhibit $\mathrm{TV}$-cutoff, with spectral gap and mixing time satisfying
    \begin{equation*}
        \lambda_n=\frac{1}{n}, \quad \widetilde t_{\mathrm{TV},n}(\varepsilon)=\Theta (n\ln n).
    \end{equation*}
    Moreover, by \parencite[Example 3.7]{bobkov2006modified}, the modified log-Sobolev constant $\rho_n(1)$ is of order $\frac{1}{n}$, and recalling that
    \begin{equation*}
        \widetilde{t}_{2,n}(\varepsilon)\geq \widetilde{t}_{\mathrm{KL},n}(\varepsilon)\geq \widetilde{t}_{\mathrm{TV},n}\left(\sqrt{\frac{\varepsilon}{2}}\right),
    \end{equation*}
    then by Theorem \ref{thm:f divergence cutoff F_pq} to \ref{thm:KL divergence cutoff via LSI} and \ref{thm:equivalence between TV-type divergence cutoff and TV cutoff}, there is cutoff under $L^2$-type divergences, $\mathrm{TV}$-type divergences and $\mathrm{KL}$ divergence with cutoff window of the order $n$. 
\end{example}

\begin{example}
    [Aldous' example]\label{ex:Aldous}
    We consider the Aldous' example which is a reversible lazy random walk on finite state space, and readers can check \parencite[Example 7.1]{basu2017characterization}, \parencite[Section 6.1]{chen2008cutoff} or \parencite[Section 4.2]{chen2006cutoff} for more details. This model has no $\mathrm{TV}$-cutoff, and hence there is no cutoff for any $\mathrm{TV}$-type divergence by Theorem \ref{thm:equivalence between TV-type divergence cutoff and TV cutoff}. However, we still have 
    \begin{equation*}
        \liminf_{n\rightarrow\infty}\lambda_n>0, \quad \widetilde t_{\mathrm{TV},n}(\varepsilon)=\Theta (n),
    \end{equation*}
    which implies there is a cutoff for any $L^2$-type divergence by Theorem \ref{thm:f divergence cutoff F_pq} to \ref{thm:alpha-divergence cutoff with 1<alpha<2}. This also demonstrates that cutoff under $L^2$-type divergence is not equivalent to cutoff under $\mathrm{TV}$-type.
\end{example}

\begin{example}
    [Pak's example]\label{ex:Pak}
    Pak's example offers a regime of changing the pattern of mixing times and even destroying cutoff via perturbing the transition matrix. For its classical result introduced in \parencite[Example 18.7]{levin2017markov}, Pak's example has been used as a counter example for the sufficiency in Peres' conjecture under $\mathrm{TV}$-cutoff. In the following part, we will show that such argument can be extended to study cutoff under other divergences. Results include: (1) $\mathrm{TV}$-cutoff and $L^2$-cutoff are not equivalent (also mentioned in \parencite[Section 6.2]{chen2008cutoff}); (2) $\mathrm{TV}$-cutoff and $\mathrm{KL}$-cutoff are not equivalent; (3) Separation cutoff and $\mathrm{KL}/L^2$-cutoff are not equivalent.
    
    Suppose the $n^{th}$ Markov chain $\{X_t^{(n)}\}_{t\in \mathbb N}$ on finite state space $\mathcal{X}_n$ has reversible transition matrix $P_n$ and stationary distribution $\pi_n$. We stress the dependency on $P_n$ of various quantities of interests: we write $\widetilde d_{\mathrm{TV}}(P_n,t)$ to be the worst-case $\mathrm{TV}$ distance and $\widetilde d_{2}(P_n,t)$ as the worst-case $L^2$ distance at time $t$,  $\widetilde t_{\mathrm{TV}}(P_n,\varepsilon)$ and $\widetilde t_{2}(P_n,\varepsilon)$ as the $\mathrm{TV}$ and $L^2$ mixing time of $n^{th}$ chain respectively, the second largest singular value as $\kappa(P_n)$, and let $\lambda'(P_n):=-\ln \kappa(P_n)$. Assume $\{X_t^{(n)}\}_{t\in \mathbb N}$ has a $\mathrm{TV}$-cutoff, $\lambda'(P_n)\rightarrow 0$ and $\lambda'(P_n)\widetilde t_{\mathrm{TV}}(P_n, \varepsilon)\rightarrow\infty$, then $L^2$-cutoff also exists. Now, we consider another sequence of chain $\{Y_t^{(n)}\}_{t\in \mathbb N}$ on the same state space $\mathcal{X}_n$ with transition matrix 
    \begin{equation}\label{eq:definition of Q_n w.r.t. P_n}
        Q_n:=(1-c_n)P_n+c_n\Pi_n,\quad c_n \in (0,1),
    \end{equation}
    then $\pi_n$ is also the stationary distribution of $Q_n$, and we have 
    \begin{equation}\label{eq:expression of Q_t,n in Pak's example}
        Q_{t,n}=(1-c_n)^tP_{t,n}+\left(1-(1-c_n)^t\right)\Pi_n,
    \end{equation}
    which yields for $t\in \mathbb N$, 
    \begin{align}
        \widetilde d_{\mathrm{TV}}(Q_n,t)&=(1-c_n)^t\widetilde d_{\mathrm{TV}}(P_n,t),\label{eq:P_n and Q_n, TV}\\
        \widetilde d_{2}(Q_n,t)&=(1-c_n)^t\widetilde d_{2}(P_n,t),\label{eq:P_n and Q_n, L^2}\\
        \left\|Q_{t,n}-\Pi_n\right\|_{L^2\rightarrow L^2}&=(1-c_n)^t\left\|P_{t,n}-\Pi_n\right\|_{L^2\rightarrow L^2},\label{eq:P_n and Q_n, 2-norm}
    \end{align}
    and \eqref{eq:P_n and Q_n, 2-norm} together with Proposition \ref{prop: convergence rate of Markov semigroup} indicate that 
    \begin{equation}\label{eq:spectral gap of Q_n in Pak's example}
        \lambda'(Q_n)=\lambda'(P_n)-\ln (1-c_n).
    \end{equation}

    In the following part, we will explain why the new process $\{Y_t^{(n)}\}_{t\in \mathbb N}$ can serve as a counter-example. First of all, we assume $c_n\rightarrow 0$, and $\lambda'(P_n)c_n^{-1}\rightarrow \infty$, plugging into \eqref{eq:spectral gap of Q_n in Pak's example}, we have $\lambda'(Q_n)\sim \lambda'(P_n)$. If $c_n\widetilde t_{\mathrm{TV}}(P_n,\varepsilon)\rightarrow\infty$ for any $\varepsilon$, then by \eqref{eq:P_n and Q_n, TV} we have 
    \begin{equation}\label{eq:exact mixing time of Q_n in Pak's example}
        \widetilde t_{\mathrm{TV}}(Q_n,\varepsilon)\sim c_n^{-1}\ln \frac{1}{\varepsilon},
    \end{equation}
    and hence there is no cutoff for $Q_n$ under any $\mathrm{TV}$-type divergence. However, we have $\lambda'(Q_n)\widetilde t_{2}(Q_n,\varepsilon)\rightarrow\infty$ for any $\varepsilon>0$, hence cutoff still exists under $L^2$-type divergence for $Q_n$. 

    Here we give a rigorous proof for \eqref{eq:exact mixing time of Q_n in Pak's example}. Without loss of generality, for $\varepsilon<1$, when $n$ is very large, if there exists $M>1$ such that $\widetilde t_{\mathrm{TV}}(Q_n,\varepsilon)\geq M c_n^{-1}\ln \frac{1}{\varepsilon}=:t$, we have 
    \begin{align*}
        \varepsilon &\leq \widetilde d_{\mathrm{TV}}(Q_n,t)=\varepsilon^{-Mc_n^{-1}\ln (1-c_n)}\widetilde d_{\mathrm{TV}}(P_n,t)\\
        &\leq \varepsilon^{-Mc_n^{-1}\ln (1-c_n)}\rightarrow \varepsilon^M, \quad \text{as }n\rightarrow\infty,
    \end{align*}
    which is a contradiction. On the other hand, if there exists $0<m<1$ such that $\widetilde t_{\mathrm{TV}}(Q_n,\varepsilon)\leq m c_n^{-1}\ln \frac{1}{\varepsilon}=:t$, we have 
    \begin{align*}
        \varepsilon &\geq \widetilde d_{\mathrm{TV}}(Q_n,t)=\varepsilon^{-mc_n^{-1}\ln (1-c_n)}\widetilde d_{\mathrm{TV}}\left(P_n,m c_n^{-1}\ln \frac{1}{\varepsilon}\right)\\
        &\rightarrow \varepsilon^m, \quad \text{as }n\rightarrow\infty,
    \end{align*}
    where in the last step we have used $c_n^{-1}=o(\widetilde t_{\mathrm{TV}}(P_n,\eta))$ for any $\eta>0$, and that $P_n$ exhibits $\mathrm{TV}$-cutoff. This also forms a contradiction.

    Second, we further suppose the original chain $\{X_t^{(n)}\}_{t\in \mathbb N}$ is $\frac{2}{3}$-lazy (i.e. $P_n(x,x)\geq \frac{2}{3}$) for each $n$, and denote its spectral gap as $\lambda(P_n)$ and modified log-Sobolev constant as $\rho(P_n, 1)$. Since $c_n\rightarrow 0$, $\{Y_t^{(n)}\}_{t\in \mathbb N}$ is also lazy when $n$ is large. Recalling that for lazy reversible chain, the second largest singular value equals to the second largest eigenvalue, then by \eqref{eq:P_n and Q_n, 2-norm}, we have 
    \begin{equation*}
        1-\lambda(Q_n)=(1-c_n)(1-\lambda(P_n)),
    \end{equation*}
    and $c_n\rightarrow 0$ indicates $\lambda(Q_n)\sim \lambda(P_n)\sim \lambda'(P_n)$. We also assume that 
    \begin{equation}\label{eq:assumption of mLSI equivalent to PI in Pak's example}
        \rho(P_n,1)=\Theta(\lambda(P_n)),
    \end{equation}
    then plugging \eqref{eq:definition of Q_n w.r.t. P_n} into \eqref{eq:expression of Dirichelet f,g, reversible} and recalling $\rho(Q_n,1)=\mathcal{O}(\lambda(Q_n))$, we have $\rho(Q_n,1)=\Theta (\lambda(Q_n))$, hence $\rho(Q_n,1)\widetilde t_{\mathrm{KL}}(Q_n,\varepsilon)\rightarrow \infty$ by Pinsker's inequality.  Then by Theorem \ref{thm:KL divergence cutoff via LSI} there exists $\mathrm{KL}$-cutoff for $Q_n$, where we recall that $\widetilde t_{\mathrm{KL}}(Q_n,\varepsilon)$ is the worst-case $\mathrm{KL}$ mixing time. This implies that $\mathrm{TV}$-cutoff and $\mathrm{KL}$-cutoff are not equivalent. 
    
    The assumption \eqref{eq:assumption of mLSI equivalent to PI in Pak's example} can be easily verified. Recalling that the spectral gap and modified log-Sobolev constant remain the same order after lazifying the chain (i.e. $P_n\leftarrow \frac{1}{2}(P_n+I)$), we can list a few examples satisfying \eqref{eq:assumption of mLSI equivalent to PI in Pak's example}:
    \begin{itemize}
        \item Lazy random walk on hypercube: Example \ref{eg:lazy random walk on hypercube}.

        \item Lazified random transpositions: \parencite[Example 3.12]{bobkov2006modified}, \parencite{diaconis1981generating}, \parencite{diaconis1996cutoff}, \parencite[Corollary 3.1]{goel2004modified}, $\lambda(P_n), \rho(P_n,1)=\Theta\left(\frac{1}{n}\right)$, $\widetilde t_{\mathrm{TV}}(P_n,\varepsilon)=\Theta (n\ln n)$.

        \item Lazified high-temperature Curie-Weiss model with Glauber dynamics: \parencite[Theorem 12]{anari2021entropic}, \parencite[Theorem 1]{ding2009mixing}, for the fixed inverse temperature $\beta<1$, $\lambda(P_n), \rho(P_n,1)=\Theta\left(\frac{1}{n}\right)$, $\widetilde t_{\mathrm{TV}}(P_n,\varepsilon)=\Theta (n\ln n)$.
    \end{itemize}

    Third, under all the assumptions before, we further assume $P_n$ exhibits separation cutoff, and that for any $\varepsilon>0$,
    \begin{equation}\label{eq:separation mixing time equivalent to TV mixing time}
        \widetilde t_{\mathrm{sep}}(P_n,\varepsilon)=\Theta(\widetilde t_{\mathrm{TV}}(P_n,\varepsilon)),
    \end{equation} 
    hence $c_n\widetilde t_{\mathrm{sep}}(P_n,\varepsilon)\rightarrow\infty$. Recalling the separation distance defined in Section \ref{sec:separation-type divergences}, according to \eqref{eq:expression of Q_t,n in Pak's example}, we can write the separation distance of $Q_{t,n}$ to $\pi_n$ as 
    \begin{equation*}
        \widetilde d_{\mathrm{sep}}(Q_n,t)=(1-c_n)^t \widetilde d_{\mathrm{sep}}(P_n,t),
    \end{equation*}
    and since $\widetilde d_{\mathrm{sep}}(P_n,t)\leq 1$, similar to \eqref{eq:exact mixing time of Q_n in Pak's example}, we can still obtain
    \begin{equation*}
        \widetilde t_{\mathrm{sep}}(Q_n,\varepsilon)\sim c_n^{-1}\ln \frac{1}{\varepsilon},
    \end{equation*}
    hence there is no separation cutoff for $Q_n$. However, there are $\mathrm{KL}$-cutoff and $L^2$-cutoff according to the above discussions, and this implies that separation cutoff is not equivalent to $\mathrm{KL}$-cutoff or $L^2$-cutoff. Here assumption \eqref{eq:separation mixing time equivalent to TV mixing time} can be readily verified for example for the lazy random walk on hypercube \parencite[Theorem 18.8]{levin2017markov}.
    
\end{example}

\begin{example}
    [Product chains]
    \label{eg:product chains}
    Inspired by \parencite[Section 5.4]{su1995methods}, we provide a new approach of modifying the mixing time via extending the state space in the form of product chains, while maintaining reversibility. This possesses a similar form with lifted Markov chains \parencite{diaconis2000analysis, chen1999lifting}, although the latter one is non-reversible, and serves for different purposes like speed-up effects. For simplicity, in this article, we only consider the product chain consisting of a uniform random walk with an extra coordinate added. Under specific choices of parameters, we use it to show that $\mathrm{KL}$-cutoff and $L^2$-cutoff are not equivalent. For basic properties of product chains, readers can check \parencite{chen2018cutoffs}.
    
    For the $n^{th}$ process, we consider the continuous-time random walk on finite state space $\mathcal{X}_n=\mathcal{S}\times \mathcal{G}_n$, where $\mathcal{S}=\{0,1\}$, $0\in \mathcal{G}_n$ and $|\mathcal{G}_n|=:g_n<\infty$ as $n\rightarrow\infty$. At each step, according to a Poisson process with rate $1$, we pick the first coordinate $\mathcal{S}$ with probability $p_n<\frac{1}{2}$ and the second coordinate $\mathcal{G}_n$ with probability $1-p_n$, then take a random walk uniformly in that coordinate. The random walk on $\mathcal{S}$ has transition matrix $S(x,y)=\frac{1}{2}, \enspace \forall x,y\in S$, and the transition matrix on $\mathcal{G}_n$ is $G_n(x,y)=\frac{1}{g_n}, \enspace \forall x,y\in \mathcal{G}_n$. In view of these choices, the transition matrix for the $n^{th}$ process $\{X_t^{(n)}\}_{t\geq 0}$ is reversible, and can be written as
    \begin{align}
        P_{t,n}&=\exp\left(t\left(p_n S\otimes I+(1-p_n)I\otimes G_n-I\otimes I\right)\right)\label{eq:product chain example semigroup}\\
        &=e^{p_nt(S-I)}\otimes e^{(1-p_n)t(G_n-I)}, \quad t\in [0,\infty),\label{eq:product chain example decomposition}
    \end{align}
    and the stationary distribution is $\pi_n=\mu\otimes \nu_n$, where $\mu(x)=\frac{1}{2}$ and $\nu_n(x)=\frac{1}{g_n}$. Since both $S$ and $G_n$ have eigenvalues $0$ and $1$, by \eqref{eq:product chain example semigroup} and \parencite[Corollary 12.13]{levin2017markov}, the spectral gap of $\{X_t^{(n)}\}_{t\geq 0}$ is $\lambda_n=p_n$. 
    
    By symmetry, the distribution of $X_t^{(n)}$ at time $t$ is independent of the initial state, and without loss of generality we assume the initial state to be $X_0^{(n)}=(0,0)\in \mathcal{S}\otimes\mathcal{G}_n$. Denote $X_t^{(n)}=:\left(U_t^{(n)},V_t^{(n)}\right)$, then by \eqref{eq:product chain example decomposition}, $\{U_t^{(n)}\}_{t\geq 0}$ has transition matrix $e^{p_nt(S-I)}$ and $\{V_t^{(n)}\}_{t\geq 0}$ has transition matrix $e^{(1-p_n)t(G_n-I)}$, hence we have 
    \begin{equation*}
        \mathbb P\left(U_t^{(n)}=1\right)=\frac{1}{2}\left(1-e^{-p_nt}\right), \quad \mathbb P\left(U_t^{(n)}=0\right)=\frac{1}{2}\left(1+e^{-p_nt}\right),
    \end{equation*}
    and 
    \begin{align*}
        \mathbb P\left(V_t^{(n)}=x\right)&=\frac{1}{g_n}\left(1-e^{-(1-p_n)t}\right), \quad x\neq 0,\enspace x\in \mathcal{G}_n,\\
        \mathbb P\left(V_t^{(n)}=0\right)&=\frac{1}{g_n}\left(1+(g_n-1)e^{-(1-p_n)t}\right).
    \end{align*}
    
    Let $d_{\mathrm{KL},n}(X,t)$, $d_{\mathrm{KL},n}(U,t)$ and $d_{\mathrm{KL},n}(V,t)$ be the $\mathrm{KL}$ divergences of $X_t^{(n)}$, $U_t^{(n)}$ and $V_t^{(n)}$ to their stationary distributions $\pi_n$, $\mu$, $\nu_n$ respectively, then we have 
    \begin{align*}
        d_{\mathrm{KL},n}(U,t)&=\frac{1}{2}\left(1-e^{-p_nt}\right)\ln \left(1-e^{-p_nt}\right)+\frac{1}{2}\left(1+e^{-p_nt}\right)\ln \left(1+e^{-p_nt}\right),\\
        d_{\mathrm{KL},n}(V,t)&=\frac{g_n-1}{g_n}\left(1-e^{-(1-p_n)t}\right)\ln \left(1-e^{-(1-p_n)t}\right)\\
        &\enspace +\frac{1+(g_n-1)e^{-(1-p_n)t}}{g_n}\ln \left(1+(g_n-1)e^{-(1-p_n)t}\right),
    \end{align*}
    then the mixing time for $U_t^{(n)}$ can be written as 
    \begin{equation}\label{eq:KL mixing time of U_t}
        t_{\mathrm{KL},n}(U,\varepsilon)=\frac{1}{p_n}\phi(\varepsilon),
    \end{equation}
    where $\phi:(0,\infty)\rightarrow (0,\infty)$ is strictly decreasing and satisfies $\lim_{\varepsilon\rightarrow 0}\phi(\varepsilon)=\infty$ and $\lim_{\varepsilon\rightarrow\infty}\phi(\varepsilon)=0$. Moreover, using a similar argument in \parencite[Theorem 5.12]{su1995methods}, it is easy to check that when $g_n\rightarrow\infty$ is very large (e.g. $g_n\sim\exp(n^2)$), $d_{\mathrm{KL},n}(V,t)$ exhibits $\mathrm{KL}$-cutoff at cutoff time
    \begin{equation}\label{eq:KL mixing time of V_t}
        t_{\mathrm{KL},n}(V,\varepsilon)\sim \frac{1}{1-p_n}\ln \ln g_n.
    \end{equation}
    Similarly, for the $L^2$ distances, we have 
    \begin{align*}
        d_{2,n}(U,t)&=e^{-p_nt},\\
        d_{2,n}(V,t)&=\sqrt{g_n-1}\cdot e^{-(1-p_n)t},
    \end{align*}
    and the mixing times are 
    \begin{align}
        t_{2,n}(U,\varepsilon)&=\frac{1}{p_n}\ln \frac{1}{\varepsilon},\label{eq:L^2 mixing time of U_t}\\
        t_{2,n}(V,\varepsilon)&=\frac{1}{1-p_n}\left(\ln \frac{1}{\varepsilon}+\frac{1}{2}\ln (g_n-1)\right)\sim \frac{1}{2(1-p_n)}\ln g_n,\label{eq:L^2 mixing time of V_t}
    \end{align}
    then $V_t^{(n)}$ exhibits $L^2$-cutoff by \parencite[Proposition 2.3]{chen2008cutoff}. 

    Now, we take $g_n\rightarrow\infty$ sufficiently large (e.g. $g_n\sim \exp(n^2)$), and take $p_n=\left(\ln\ln g_n\right)^{-1}\rightarrow 0$, then we show that in this situation, for $\{X_t^{(n)}\}_{t\geq 0}$, there is $L^2$-cutoff but not $\mathrm{KL}$-cutoff. We first prove that for $\varepsilon<\phi^{-1}(1)$, the $\mathrm{KL}$ mixing time of $X_t^{(n)}$ is 
    \begin{equation}\label{eq:KL mixing time of X_t in product chain example}
        t_{\mathrm{KL},n}(X,\varepsilon)\sim \frac{1}{p_n}\phi(\varepsilon)=\phi(\varepsilon)\ln\ln g_n.
    \end{equation}
    Without loss of generality, if there exists $M>1$ such that $t_{\mathrm{KL},n}(X,\varepsilon)>\frac{M}{p_n}\phi(\varepsilon)$, then by the tensorization rule of $\mathrm{KL}$ divergence for product chains as stated in \parencite[Proposition 6]{barrera2006cut} or \parencite[Lemma A.4]{boursier2023universal}, we have 
    \begin{align*}
        \varepsilon &\leq d_{\mathrm{KL},n}\left(X,\frac{M}{p_n}\phi(\varepsilon)\right)=d_{\mathrm{KL},n}\left(U,\frac{M}{p_n}\phi(\varepsilon)\right)+d_{\mathrm{KL},n}\left(V,\frac{M}{p_n}\phi(\varepsilon)\right)\\
        &\leq \phi^{-1}\left(M\phi(\varepsilon)\right)+d_{\mathrm{KL},n}\left(V,\frac{M}{p_n}\right)\\
        &\rightarrow \phi^{-1}\left(M\phi(\varepsilon)\right)<\varepsilon,
    \end{align*}
    where the second inequality utilizes $\varepsilon<\phi^{-1}(1)$, and the third line follows from the cutoff time of $V_t^{(n)}$ in \eqref{eq:KL mixing time of V_t}. On the other hand, if there exists $0<m<1$ such that $t_{\mathrm{KL},n}(X,\varepsilon)<\frac{m}{p_n}\phi(\varepsilon)$, we have 
    \begin{align*}
        \varepsilon &\geq d_{\mathrm{KL},n}\left(X,\frac{m}{p_n}\phi(\varepsilon)\right)=d_{\mathrm{KL},n}\left(U,\frac{m}{p_n}\phi(\varepsilon)\right)+d_{\mathrm{KL},n}\left(V,\frac{m}{p_n}\phi(\varepsilon)\right)\\
        &\geq d_{\mathrm{KL},n}\left(U,\frac{m}{p_n}\phi(\varepsilon)\right)\\
        &=\phi^{-1}\left(m\phi(\varepsilon)\right)>\varepsilon,
    \end{align*}
    therefore \eqref{eq:KL mixing time of X_t in product chain example} is verified, and by \parencite[Proposition 2.3]{chen2008cutoff}, there is no $\mathrm{KL}$-cutoff for $\{X_t^{(n)}\}_{t\geq 0}$. Next, according to the tensorization rule of $L^2$ distance \parencite[Lemma A.4]{boursier2023universal}, we have 
    \begin{equation*}
        d_{2,n}^2(X,t)=\left(1+d_{2,n}^2(U,t)\right)\left(1+d_{2,n}^2(V,t)\right)-1\geq d_{2,n}^2(V,t),
    \end{equation*}
    then by \eqref{eq:L^2 mixing time of V_t}, we have 
    \begin{equation*}
        t_{2,n}(X,\varepsilon)\geq t_{2,n}(V,\varepsilon)\sim \ln g_n,
    \end{equation*}
    hence
    \begin{equation*}
        \lambda_n t_{2,n}(X,\varepsilon)\geq p_n t_{2,n}(V,\varepsilon)\sim \frac{\ln g_n}{\ln\ln g_n}\rightarrow\infty,
    \end{equation*}
    which implies there is $L^2$-cutoff for $\{X_t^{(n)}\}_{t\geq 0}$ by Proposition \ref{prop:characterization of L^p cutoff}.
\end{example}

\section{Non-reversible cases}

In this section, we will investigate the cutoff phenomenon of processes with non-reversible, or more generally, non-normal Markov generators. Non-reversibility arises naturally in many models as well as real-world applications. For example, non-reversible discrete-time algorithms appear as discretization of continuous-time reversible stochastic differential equations for sampling and optimization tasks, like \parencite{zhang2017hitting, vempala2019rapid, roberts1996exponential}. Moreover, for finite state space Markov chains, breaking reversibility can sometimes serve as an acceleration technique, see for instance \parencite{chen2013accelerating, chatterjee2021correction}. However, current results about cutoff phenomenon under non-reversible setting are still at primary stage, most of which only deal with some specific models, and a universal criterion is still quite open. For references of these non-reversible models, readers can check \parencite{bordenave2019cutoff, lancia2012entropy}.

Our aim is to give a common criterion to characterize non-reversible cutoff phenomenon. In Theorem \ref{thm:comparison between mixing times for L^p and R_alpha, non-reversible}, without assumption of reversibility or normality, we give new results of comparison between $L^p$-mixing times and $R_\alpha$-mixing times respectively, which serve as a complement to the classical results in \parencite[Proposition 5.1]{chen2008cutoff}. As its application, in Theorem \ref{thm:characterization of worst-case L^p cutoff with 1<p<infty, normal}, \ref{thm:characterization of worst-case alpha/Renyi-divergence cutoff, non-reversible} and \ref{thm:L^p cutoff, non-normal}, we extend \parencite[Theorem 5.3, 5.4]{chen2008cutoff} to the normal setting and the non-reversible setting generated by slight perturbations. 

\begin{theorem}
    [Comparison between $L^p/R_\alpha$-mixing times for $p\in(1,\infty)$, $\alpha\in (1,\infty)$, \textbf{non-reversible}]
    \label{thm:comparison between mixing times for L^p and R_alpha, non-reversible}
    Consider a sequence of Markov processes $\{X_t^{(n)},t\in T\}_{n=1}^\infty$ on state space $\mathcal{X}_n$ with stationary distribution $\pi_n$, and semigroup $P_{t,n}$, then the following statements hold.
    \begin{enumerate}[label=(\roman*)]
        \item\label{item:comparison between L^p mixing times, non-reversible} For any $p,q\in (1,\infty)$, there exists two positive strictly increasing mappings $\varepsilon\mapsto \psi_{p,q}(\varepsilon)$, $\varepsilon\mapsto \Psi_{p,q}(\varepsilon)$ and two constants $m_{p,q},M_{p,q}>0$, such that for any $\varepsilon>0$,
        \begin{equation}\label{eq:equivalence between t_p and t_q, normal}
            m_{p,q}\cdot\widetilde t_{p,n}\left(\psi_{p,q}(\varepsilon)\right)\leq \widetilde t_{q,n}(\varepsilon)\leq M_{p,q}\cdot\widetilde t_{p,n}\left(\Psi_{p,q}(\varepsilon)\right).
        \end{equation}

        \item\label{item:comparison between R_alpha mixing times, non-reversible} For any $\alpha,\beta\in (1,\infty)$, there exists two constants $c_{\alpha,\beta}$, $C_{\alpha,\beta}>0$, such that for any $\varepsilon>0$,
        \begin{equation}
            c_{\alpha,\beta}\cdot \widetilde t_{R_\alpha,n}(\varepsilon)\leq \widetilde t_{R_\beta,n}(\varepsilon)\leq C_{\alpha,\beta}\cdot \widetilde t_{R_\alpha,n}(\varepsilon).
        \end{equation}
    \end{enumerate}
\end{theorem}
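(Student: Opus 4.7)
The plan is to establish both comparison inequalities via the Chapman--Kolmogorov factorizations $P_{u+v,n} - \Pi_n = (P_{u,n} - \Pi_n)(P_{v,n} - \Pi_n)$ (respectively $P_{u+v,n} = P_{u,n}P_{v,n}$ for the R\'enyi case), composed through an intermediate Lebesgue space, with the resulting off-diagonal operator norms controlled by Riesz--Thorin interpolation (Proposition \ref{prop:Riesz-Thorin theorem}) \emph{without} invoking the adjoint $P_{t,n}^*$. This is the conceptual departure from Chen--Saloff-Coste's Proposition 5.1, which closes the loop via the adjoint mixing time and is therefore unavailable in the non-reversible setting.

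For part (i), the easy inclusion $\widetilde t_{p,n}(\varepsilon) \leq \widetilde t_{q,n}(\varepsilon)$ (for $p \leq q$) follows from $\|f\|_p \leq \|f\|_q$ on $(\mathcal{X}_n,\pi_n)$, which gives $\widetilde d_{p,n} \leq \widetilde d_{q,n}$. For the reverse direction with $p < q$, I would decompose
\[
\widetilde d_{q,n}(u+v) \leq \|P_{u,n}-\Pi_n\|_{L^{p'}\to L^\infty}\cdot \|P_{v,n}-\Pi_n\|_{L^{q'}\to L^{p'}} = \widetilde d_{p,n}(u)\cdot \|P_{v,n}-\Pi_n\|_{L^{q'}\to L^{p'}},
\]
and then invoke Riesz--Thorin interpolation between the anchor estimates $\|P_{v,n}-\Pi_n\|_{L^{q'}\to L^{q'}} \leq 2$ (which uses only Markov contraction plus $\|\Pi_n\|_{L^{q'}\to L^{q'}}\leq 1$) and $\|P_{v,n}-\Pi_n\|_{L^{q'}\to L^\infty} = \widetilde d_{q,n}(v)$ to obtain the master bound
\[
\widetilde d_{q,n}(u+v) \leq 2^{q'/p'}\,\widetilde d_{p,n}(u)\,\widetilde d_{q,n}(v)^{1-q'/p'}.
\]
Setting $u = v = t_0$ and iterating along the dyadic sequence $\{2^k t_0\}$ produces the recursion $a_{k+1}\leq 2^{q'/p'}\widetilde d_{p,n}(t_0)\,a_k^{1-q'/p'}$ (using monotonicity of $\widetilde d_{p,n}$), which contracts toward the fixed point $2\widetilde d_{p,n}(t_0)^{p'/q'}$ at geometric rate $(1-q'/p')^k$. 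Choosing $t_0 = \widetilde t_{p,n}(\Psi_{p,q}(\varepsilon))$ with $\Psi_{p,q}(\varepsilon)\asymp \varepsilon^{q'/p'}$ then delivers the upper bound $\widetilde t_{q,n}(\varepsilon)\leq M_{p,q}\widetilde t_{p,n}(\Psi_{p,q}(\varepsilon))$; symmetric considerations (interchanging the roles of $p$ and $q$) produce the lower bound with $\psi_{p,q}$.

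For part (ii), I would use the operator-norm representation $\widetilde d_{R_\alpha,n}(t) = \tfrac{\alpha}{\alpha-1}\log\|P_{t,n}\|_{L^{\alpha'}\to L^\infty}$ from \eqref{eq:expression of overline Renyi divergence}. For $\alpha<\beta$, the decomposition $\|P_{u+v,n}\|_{L^{\beta'}\to L^\infty}\leq \|P_{u,n}\|_{L^{\alpha'}\to L^\infty}\cdot\|P_{v,n}\|_{L^{\beta'}\to L^{\alpha'}}$ together with Riesz--Thorin interpolation of the second factor between the Markov contraction $\|P_{v,n}\|_{L^{\beta'}\to L^{\beta'}}\leq 1$ and $\|P_{v,n}\|_{L^{\beta'}\to L^\infty}$ yields, after taking logarithms (and simplifying via the identity $\tfrac{\beta(\alpha-1)}{\alpha(\beta-1)} = \tfrac{\beta'}{\alpha'}$), the affine recursion
\[
\widetilde d_{R_\beta,n}(u+v) \leq A\,\widetilde d_{R_\alpha,n}(u) + (1-A)\,\widetilde d_{R_\beta,n}(v), \qquad A := \tfrac{\beta'}{\alpha'}\in(0,1).
\]
Fixing $u = \widetilde t_{R_\alpha,n}(\varepsilon)$ and iterating with $v\leftarrow v + u$ gives $b_{k+1}\leq A\varepsilon + (1-A)b_k$, whose fixed point is $\varepsilon$; hence $\widetilde t_{R_\beta,n}(\varepsilon)\leq C_{\alpha,\beta}\widetilde t_{R_\alpha,n}(\varepsilon)$ after a bounded number of doublings. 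The reverse inequality $\widetilde t_{R_\alpha,n}(\varepsilon)\leq \widetilde t_{R_\beta,n}(\varepsilon)$ is immediate from the monotonicity of R\'enyi divergence in $\alpha$ (Proposition \ref{prop:properties of f-divergence}\ref{item:monotonicity of Renyi divergence}).

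The principal obstacle is that, absent the adjoint, our master bounds leave the ``$q$-side'' divergence $\widetilde d_{q,n}(v)$ (respectively $\widetilde d_{R_\beta,n}(v)$) on the right-hand side, so the comparison cannot be closed in a single Riesz--Thorin step as in the classical reversible argument. The iterative doubling is therefore forced, and the subtle technical point is to verify that the geometric contraction factors $(1-q'/p')^k$ and $(1-A)^k$ absorb any initial excess of $\widetilde d_{q,n}(t_0)$ or $\widetilde d_{R_\beta,n}(0)$ so that the resulting constants $m_{p,q}, M_{p,q}, c_{\alpha,\beta}, C_{\alpha,\beta}$ and window functions $\psi_{p,q}, \Psi_{p,q}$ depend only on $(p,q)$ or $(\alpha,\beta)$---this uniform dependence being precisely the ``practical'' improvement emphasized in the introduction.
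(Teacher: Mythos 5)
Your high-level strategy — factor $P_{u+v,n}-\Pi_n=(P_{u,n}-\Pi_n)(P_{v,n}-\Pi_n)$ (resp.\ $P_{u+v,n}=P_{u,n}P_{v,n}$), pass one factor to $L^\infty$ via $\widetilde d$, and interpolate the other factor by Riesz--Thorin without ever invoking the adjoint — is exactly what the paper does. But your choice of intermediate exponent is where the argument breaks. Your master bound reads
\[
\widetilde d_{q,n}(u+v)\;\le\;2^{q'/p'}\,\widetilde d_{p,n}(u)\,\widetilde d_{q,n}(v)^{1-q'/p'},
\]
with the \emph{large}-exponent divergence $\widetilde d_{q,n}$ surviving on the right. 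Setting $u=v$ then yields a fixed-point recursion $a_{k+1}\le B\,a_k^{1-\theta}$ in which the initial term $a_0=\widetilde d_{q,n}(t_0)$ (resp.\ $b_0=\widetilde d_{R_\beta,n}(0)=\log(1/\pi_{\min,n})$) appears, and the number of doubling steps needed to absorb it scales like $\log\log(a_0/B^{1/\theta})$. There is no $n$-uniform a priori bound on $\widetilde d_{q,n}(t_0)$ given only that $\widetilde d_{p,n}(t_0)\le\Psi(\varepsilon)$: on a finite state space one has $\widetilde d_{q,n}(0)\asymp\pi_{\min,n}^{-1/q'}$, and a density $h_{t_0}-1$ concentrated on a small-measure set can make $\|h_{t_0}-1\|_q$ arbitrarily large while $\|h_{t_0}-1\|_p$ stays small. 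So the ``bounded number of doublings'' that you flag as the subtle technical point is not actually bounded uniformly in $n$, and the constants $M_{p,q}$, $C_{\alpha,\beta}$ would depend on the sequence of chains.

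The paper sidesteps this by choosing the intermediate exponent to be $r=\sqrt s$ (resp.\ $\gamma=\sqrt\alpha$) and interpolating the tail operator between $L^1\to L^1$ and $L^{r'}\to L^\infty$, rather than between $L^{q'}\to L^{q'}$ and $L^{q'}\to L^\infty$. The algebra of conjugate exponents (you can check that $\tfrac1{s'}=(1-\tfrac1r)\cdot 1+\tfrac1r\cdot\tfrac1{r'}$ exactly when $r^2=s$) is arranged so that \emph{both} the head factor $d_{r,n}(x,u)$ and the interpolated tail $\widetilde d_{r,n}(v)^{1/r}$ carry the same, \emph{smaller} exponent $r$; in the R\'enyi case the prefactor even telescopes to $1$ since $\|P_{v,n}\|_{L^1\to L^1}=1$. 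Taking $u=v$ then gives the one-shot estimate $\widetilde t_{s,n}(2\delta^{1+1/\sqrt s})\le 2\,\widetilde t_{\sqrt s,n}(\delta)$ (resp.\ $\widetilde t_{R_\alpha,n}(\varepsilon)\le 2\,\widetilde t_{R_{\sqrt\alpha},n}(\varepsilon)$), with no residual term to iterate away. The exponent chain $s\mapsto\sqrt s\mapsto s^{1/4}\mapsto\cdots\to 1$ passes below $p$ (resp.\ $\beta$) after a number of steps $K(p,q)$ depending only on the endpoints, and $\widetilde d$ is monotone in its exponent, so $M_{p,q}=2^K$ (resp.\ $C_{\alpha,\beta}=2^K$) is uniform. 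If you want to salvage your approach, the fix is precisely to replace $q$ by $\sqrt q$ in the intermediate space and re-anchor the interpolation at $L^1\to L^1$, so that only the smaller exponent appears on the right-hand side.
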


\begin{proof}
    \ref{item:comparison between L^p mixing times, non-reversible}: For any $1<s<\infty$ and $r=\sqrt{s}$, for $x\in \mathcal{X}_n$, $g\in L^{s'}(\mathcal{X}_n,\pi_n)$ and $u,v>0$, let $\mu_{t,n}^x=\delta_x P_{t,n}$, by \eqref{eq:Kolmogorov Chapman}, we have 
    \begin{align*}
         \left|\left(\mu_{t,n}^x-\pi_n\right)(g)\right|&\leq d_{r,n}(x,u)\left\|(P_{v,n}-\Pi_n)(g)\right\|_{r'}\\
         &\leq d_{r,n}(x,u)\left\|P_{v,n}-\Pi_n\right\|_{L^{s'}\rightarrow L^{r'}},
    \end{align*}
    taking supremum over $x\in \mathcal{X}_n$, we have 
    \begin{equation}\label{eq:comparison between s and r=sqrt(s)}
        \widetilde d_{s,n}(u+v)\leq \widetilde d_{r,n}(u)\left\|P_{v,n}-\Pi_n\right\|_{L^{s'}\rightarrow L^{r'}}.
    \end{equation}
    By Riesz-Thorin Interpolation Theorem in Proposition \ref{prop:Riesz-Thorin theorem}, we have 
    \begin{align}
        \left\|P_{v,n}-\Pi_n\right\|_{L^{s'}\rightarrow L^{r'}}&\leq \left\|P_{v,n}-\Pi_n\right\|_{L^1\rightarrow L^1}^{1-\frac{1}{r}}\cdot \left\|P_{v,n}-\Pi_n\right\|_{L^{r'}\rightarrow L^\infty}^{\frac{1}{r}}\nonumber\\
        &\leq 2\cdot \widetilde d_{r,n}^{\frac{1}{r}}(v),\label{eq:another way to bound norm of s' to r'}
    \end{align}
    where the second inequality comes from $\left\|P_{v,n}-\Pi_n\right\|_{L^1\rightarrow L^1}\leq 2$. Plugging into \eqref{eq:comparison between s and r=sqrt(s)}, we have 
    \begin{align*}
        \widetilde d_{s,n}(u+v)&\leq \widetilde d_{r,n}(u)\left\|P_{v,n}-\Pi_n\right\|_{L^{s'}\rightarrow L^{r'}}\\
        &\leq 2\cdot \widetilde d_{r,n}(u)\cdot \widetilde d_{r,n}^{\frac{1}{r}}(v),
    \end{align*}
    taking $v=u$ yields 
    \begin{equation*}
        \widetilde d_{s,n}(2u)\leq 2\cdot \widetilde d_{\sqrt{s},n}^{1+\frac{1}{\sqrt{s}}}(u),
    \end{equation*}
    which implies for any $\delta>0$,
    \begin{equation*}
        \widetilde t_{s,n}\left(2\delta^{1+\frac{1}{\sqrt{s}}}\right)\leq 2\cdot\widetilde t_{\sqrt{s},n}(\delta).
    \end{equation*}
    Since the dynamics $x_{k+1}=\varphi (x_k)=\sqrt{x_k}$ converge to $1$, we get the result.

    \ref{item:comparison between R_alpha mixing times, non-reversible}: Let $h_{t,n}^x=\frac{d\delta_xP_{t,n}}{d\pi_n}$, for any $1<\alpha<\infty$, $\gamma=\sqrt{\alpha}$ and $u,v\geq 0$, similar to \eqref{eq:comparison between s and r=sqrt(s)}, we have 
    \begin{equation*}
        \left\|h_{u+v,n}^x\right\|_{\alpha}\leq \left\|h_{u,n}^x\right\|_{\gamma}\left\|P_{v,n}\right\|_{L^{\alpha'}\rightarrow L^{\gamma'}},
    \end{equation*}
    taking supremum over $x\in \mathcal{X}_n$, similar to \eqref{eq:another way to bound norm of s' to r'}, we have
    \begin{align*}
        \left\|P_{u+v,n}\right\|_{L^{\alpha'}\rightarrow \infty}&\leq  \left\|P_{u,n}\right\|_{L^{\gamma'}\rightarrow \infty}\left\|P_{v,n}\right\|_{L^{\alpha'}\rightarrow L^{\gamma'}}\\
        &\leq \left\|P_{u,n}\right\|_{L^{\gamma'}\rightarrow \infty}\left\|P_{v,n}\right\|_{L^{\gamma'}\rightarrow \infty}^{\frac{1}{\gamma}},
    \end{align*}
    and hence by taking $v=u$, 
    \begin{equation*}
        \left\|P_{2u,n}\right\|_{L^{\alpha'}\rightarrow \infty}\leq \left\|P_{u,n}\right\|_{L^{\gamma'}\rightarrow \infty}^{1+\frac{1}{\gamma}}.
    \end{equation*}
    Recalling \eqref{eq:expression of overline Renyi divergence}, we have 
    \begin{align*}
        \widetilde d_{R_\alpha,n}(2u)&\leq \frac{\alpha}{\alpha-1}\cdot \frac{\gamma+1}{\gamma}\cdot\frac{\gamma-1}{\gamma}\cdot\widetilde d_{R_\gamma,n}(u)\\
        &=\widetilde d_{R_\gamma,n}(u),
    \end{align*}
    which implies for any $\varepsilon>0$, 
    \begin{equation*}
        \widetilde t_{R_\alpha,n}(\varepsilon)\leq 2\cdot \widetilde t_{R_{\sqrt{\alpha}},n}(\varepsilon),
    \end{equation*}
    then similar to \ref{item:comparison between L^p mixing times, non-reversible}, we get the result.
\end{proof}

\begin{remark}
    Despite the requirement for reversibility, there is still a slight difference between Theorem \ref{thm:comparison between mixing times for L^p and R_alpha, non-reversible} and the results in Section \ref{sec:reversible} and \parencite[Proposition 5.1]{chen2008cutoff}: the case of $p=\infty$ and $\alpha=\infty$ can not be incorporated into the proof of the new results.
\end{remark}

\subsection{Normal cases}\label{subsec:normal}
In this subsection, we will investigate the case of normal processes, which refers to $P_tP_t^*=P_t^*P_t$ as discussed in Section \ref{sec:Markov process}. Although $P_t$ is not reversible, we still have the equalities in Proposition \ref{prop: convergence rate of Markov semigroup} which play a key role in quantifying lower bounds of mixing times.

Based on Theorem \ref{thm:comparison between mixing times for L^p and R_alpha, non-reversible}, in Theorem \ref{thm:characterization of worst-case L^p cutoff with 1<p<infty, normal}, we prove the equivalence of worst-case $L^p$-cutoff for $p\in (1,\infty)$ for normal processes, which is an extension of \parencite[Theorem 4.2, 4.3]{chen2008cutoff}. In Theorem \ref{thm:characterization of worst-case alpha/Renyi-divergence cutoff, non-reversible}, we proceed to present similar results for $\alpha$-divergence and R\'enyi divergence. 

\begin{theorem}
    [Characterization of worst-case $L^p$-cutoff with $1< p<\infty$, \textbf{normal}]
    \label{thm:characterization of worst-case L^p cutoff with 1<p<infty, normal}
    Consider a sequence of Markov processes $\{X_t^{(n)},t\in T\}_{n=1}^\infty$ on state space $\mathcal{X}_n$, stationary distribution $\pi_n$, spectral gap $\lambda_n$, and semigroup $P_{t,n}$, where $P_{t,n}$ is \textbf{normal} on $L^2(\mathcal{X}_n,\pi_n)$ for each $n\geq 1$. If $T=[0,\infty)$, let $\lim_{t\rightarrow\infty}\widetilde d_{p,n}(t)=0$ for each $1< p<\infty$, then the following statements hold:
    \begin{enumerate}[label=(E\arabic*)]
        \item\label{item:E1} There exists some $1<p<\infty$ and some $\varepsilon>0$ such that $\lambda_n\widetilde t_{p,n}(\varepsilon)\rightarrow\infty$.

        \item\label{item:E2} For any $1<p<\infty$ and any $\varepsilon>0$, $\lambda_n\widetilde t_{p,n}(\varepsilon)\rightarrow\infty$.

        \item\label{item:E3} For any $1<p<\infty$ and any $\varepsilon>0$, precutoff occurs.

        \item\label{item:E4} For any $1<p<\infty$ and any $\varepsilon>0$, cutoff occurs.

        \item\label{item:E5} For any $1<p<\infty$ and any $\varepsilon>0$, there is a $(\widetilde t_{p,n}(\varepsilon), \lambda_n^{-1})$ cutoff.
    \end{enumerate}
    
    If $T=\mathbb N$, assume $\lambda_n\rightarrow 0$, and that for some $1< p<\infty$ and $\varepsilon>0$, $\lim_{n\rightarrow \infty}\widetilde t_{p,n}(\varepsilon)=\infty$. If we substitute $\lambda_n'=\min \{1, \lambda_n\}$ into $\lambda_n$ in the items, then the statements above also hold. 
\end{theorem}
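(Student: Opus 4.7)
The plan is to mirror the proof of Theorem \ref{thm:f divergence cutoff F_pq} (the reversible $\mathcal{F}_{p,q}$ case), substituting normality for reversibility at each point where the latter is invoked, and calling on the new comparison Theorem \ref{thm:comparison between mixing times for L^p and R_alpha, non-reversible} in place of the classical $L^p$-mixing-time comparisons. For $T=[0,\infty)$ I will prove the cycle $\ref{item:E2}\Rightarrow\ref{item:E5}\Rightarrow\ref{item:E4}\Rightarrow\ref{item:E3}\Rightarrow\ref{item:E2}$ and then close with $\ref{item:E1}\Leftrightarrow\ref{item:E2}$.

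For the sufficient direction $\ref{item:E2}\Rightarrow\ref{item:E5}$ I copy the contraction argument of $\ref{item:B2}\Rightarrow\ref{item:B5}$. By Kolmogorov--Chapman $(\mu^x_{u+v,n}-\pi_n)(g)=(\mu^x_{u,n}-\pi_n)(P_{v,n}-\Pi_n)(g)$, H\"older yields $|(\mu^x_{u+v,n}-\pi_n)(g)|\leq d_{p,n}(x,u)\|(P_{v,n}-\Pi_n)g\|_{p'}$, and Riesz--Thorin interpolation between the trivial bounds $\|P_{v,n}-\Pi_n\|_{L^1\to L^1},\|P_{v,n}-\Pi_n\|_{L^\infty\to L^\infty}\leq 2$ and the universal bound $\|P_{v,n}-\Pi_n\|_{L^2\to L^2}\leq e^{-\lambda_n v}$ from Proposition \ref{prop: convergence rate of Markov semigroup} (which in continuous time holds for every Markov semigroup, not only normal ones) gives $d_{p,n}(x,u+v)\leq 2^{|1-2/p|}e^{-\lambda_n v(1-|1-2/p|)}d_{p,n}(x,u)$. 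Passing to essential supremum in $x$ and then taking $u>\widetilde t_{p,n}(\varepsilon)$, $v=c/\lambda_n$ with $c\to\pm\infty$ produces the $(\widetilde t_{p,n}(\varepsilon),\lambda_n^{-1})$ cutoff. The implications $\ref{item:E5}\Rightarrow\ref{item:E4}\Rightarrow\ref{item:E3}$ are immediate from the definitions.

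The step $\ref{item:E3}\Rightarrow\ref{item:E2}$ is the only place where normality is essential. From Proposition \ref{prop:Important properties of L^p}\,(iii) together with the contractive embedding $L^\infty\hookrightarrow L^{p'}$ on a probability space, $\widetilde d_{p,n}(t)=\|P_{t,n}-\Pi_n\|_{L^{p'}\to L^\infty}\geq \|P_{t,n}-\Pi_n\|_{L^{p'}\to L^{p'}}$. Applying Riesz--Thorin in reverse to interpolate $L^2\to L^2$ between $L^{p'}\to L^{p'}$ and whichever of $L^1\to L^1$ or $L^\infty\to L^\infty$ lies on the opposite side of the exponent $2$, normality now promotes the upper bound in Proposition \ref{prop: convergence rate of Markov semigroup} to the equality $\|P_{t,n}-\Pi_n\|_{L^2\to L^2}=e^{-\lambda_n t}$, and reversing the interpolation delivers $\|P_{t,n}-\Pi_n\|_{L^{p'}\to L^{p'}}\geq \tfrac{1}{2}e^{-c_p\lambda_n t}$ with $c_p>0$ depending only on $p$, hence $\widetilde d_{p,n}(t)\geq \tfrac{1}{2}e^{-c_p\lambda_n t}$. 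The precutoff argument of $\ref{item:B3}\Rightarrow\ref{item:B2}$ then forces $\lambda_n s_n\to\infty$ along the precutoff sequence, which combined with $s_n=\mathcal{O}(\widetilde t_{p,n}(\delta))$ gives $\lambda_n\widetilde t_{p,n}(\delta)\to\infty$ for that particular $p$; \parencite[Corollary 2.5]{chen2008cutoff} upgrades this to every $\varepsilon>0$, and Theorem \ref{thm:comparison between mixing times for L^p and R_alpha, non-reversible}\,(i) spreads it to every $q\in(1,\infty)$. For $\ref{item:E1}\Rightarrow\ref{item:E2}$, apply the same contraction argument to the specific $p,\varepsilon$ furnished by $\ref{item:E1}$, extract the $(\widetilde t_{p,n}(\varepsilon),\lambda_n^{-1})$ cutoff, and then invoke \parencite[Corollary 2.5]{chen2008cutoff} and Theorem \ref{thm:comparison between mixing times for L^p and R_alpha, non-reversible}\,(i).

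For $T=\mathbb N$ the structure is identical after replacing $e^{-\lambda_n t}$ by $\kappa_n^t$ (equality under normality, Proposition \ref{prop: convergence rate of Markov semigroup}); the hypothesis $\lambda_n\to 0$ ensures $\kappa_n\to 1$ and allows the conversion between $\kappa_n$- and $\lambda_n$-based rates after substituting $\lambda_n'=\min\{1,\lambda_n\}$. The principal technical obstacle throughout is the reversed Riesz--Thorin step in $\ref{item:E3}\Rightarrow\ref{item:E2}$: extracting a matching lower bound on $\|P_{t,n}-\Pi_n\|_{L^{p'}\to L^{p'}}$ requires the $L^2$-decay rate to be known as an \emph{equality}, which is exactly what normality supplies. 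This is also why the present theorem cannot be extended to arbitrary non-normal generators without additional structure, and it motivates the bounded-perturbation treatment in Section \ref{subsec:nonnormal}.
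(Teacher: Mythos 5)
Your proposal is correct and follows essentially the same route as the paper: contraction via Kolmogorov--Chapman plus Riesz--Thorin for \ref{item:E2}$\Rightarrow$\ref{item:E5}, the operator-norm lower bound with the normality equality $\|P_{t,n}-\Pi_n\|_{L^2\to L^2}=e^{-\lambda_n t}$ fed through reversed interpolation for \ref{item:E3}$\Rightarrow$\ref{item:E2}, and \parencite[Corollary 2.5]{chen2008cutoff} together with Theorem~\ref{thm:comparison between mixing times for L^p and R_alpha, non-reversible}\,(i) to close \ref{item:E1}$\Rightarrow$\ref{item:E2}. You also correctly identify the single point where normality is load-bearing, and your discrete-time remarks match the paper's handling.
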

\begin{proof}
    We only consider $T=[0,\infty)$, and the case for $T=\mathbb N$ is similar. We only prove \ref{item:E1}$\Rightarrow$\ref{item:E2} and \ref{item:E3}$\Rightarrow$\ref{item:E2}, and the other proof  is same with Theorem \ref{thm:f divergence cutoff F_pq}.

    \ref{item:E3}$\Rightarrow$\ref{item:E2}: According to Proposition \ref{prop: convergence rate of Markov semigroup}, we have 
    \begin{equation*}
        \left\|P_{t,n}-\Pi_n\right\|_{L^2\rightarrow L^2}=e^{-\lambda_nt},
    \end{equation*}
    then use similar proof in \ref{item:B3}$\Rightarrow$\ref{item:B2}, we get the result.

    \ref{item:E1}$\Rightarrow$\ref{item:E2}: Suppose there exists some $1<p<\infty$ and $\varepsilon>0$ such that $\lambda_n\widetilde t_{p,n}(\varepsilon)\rightarrow\infty$. By \parencite[Corollary 2.5]{chen2008cutoff}, for any $\delta>0$, there is a $(\widetilde t_{p,n}(\delta), \lambda_n^{-1})$ cutoff, and hence $\lambda_n\widetilde t_{p,n}(\delta)\rightarrow\infty$. Then by Theorem \ref{thm:comparison between mixing times for L^p and R_alpha, non-reversible} item \ref{item:comparison between L^p mixing times, non-reversible}, for any $1<q<\infty$ and $\delta>0$, $\lambda_n\widetilde t_{q,n}(\delta)\rightarrow\infty$.
\end{proof}

Using similar argument and Theorem \ref{thm:comparison between mixing times for L^p and R_alpha, non-reversible} item \ref{item:comparison between R_alpha mixing times, non-reversible}, we directly get the following result.

\begin{theorem}
    [Characterization of worst-case $\alpha$-divergence and R\'enyi divergence cutoff with $1<\alpha<\infty$, \textbf{normal}]
    \label{thm:characterization of worst-case alpha/Renyi-divergence cutoff, non-reversible}
    Consider a sequence of Markov processes $\{X_t^{(n)},t\in T\}_{n=1}^\infty$ on finite state space $\mathcal{X}_n$, stationary distribution $\pi_n$, spectral gap $\lambda_n$ and semigroup $P_{t,n}$, where $P_{t,n}$ is \textbf{normal} on $L^2(\mathcal{X}_n,\pi_n)$ for each $n\geq 1$. 
    
    If $T=[0,\infty)$, for each $1< p<\infty$, let $\lim_{t\rightarrow\infty}\widetilde d_{f_\alpha,n}(t)=0$, then the following statement are equivalent:
    \begin{enumerate}[label=(F\arabic*)]
        \item\label{item:F1} There exists some $\alpha\in (1,\infty)$ and some $\varepsilon>0$ such that $\lambda_n\widetilde t_{f_\alpha,n}(\varepsilon)$ tends to infinity.

        \item\label{item:F2} For any $\alpha\in (1,\infty)$ and any $\varepsilon>0$, $\lambda_n\widetilde t_{f_\alpha,n}(\varepsilon)$ tends to infinity.

        \item\label{item:F3} For any $\alpha\in (1,\infty)$, precutoff occurs.

        \item\label{item:F4} For any $\alpha\in (1,\infty)$, cutoff occurs.

        \item\label{item:F5} For any $\alpha\in (1,\infty)$ and any $\varepsilon>0$, there is a $\left(\widetilde t_{f_\alpha,n}(\varepsilon),\lambda_n^{-1}\right)$ cutoff.
    \end{enumerate}
    
    If $T=\mathbb N$, assume $\lambda_n\rightarrow 0$, and that for some $1<\alpha<\infty$ and $\varepsilon>0$, $\lim_{n\rightarrow \infty}\widetilde t_{f_\alpha,n}(\varepsilon)=\infty$. If we substitute $\lambda_n'=\min \{1, \lambda_n\}$ into $\lambda_n$ in the items, then the statements above also hold. 

    If we replace $\widetilde t_{f_\alpha,n}(\cdot)$ with $\widetilde t_{R_\alpha,n}(\cdot)$ in the above statements, then the results still hold.
\end{theorem}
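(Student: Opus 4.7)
The plan is to parallel the proof of Theorem \ref{thm:characterization of worst-case L^p cutoff with 1<p<infty, normal}, but to route everything through R\'enyi divergence with a fixed reference parameter (namely $\alpha=2$) and use Theorem \ref{thm:comparison between mixing times for L^p and R_alpha, non-reversible}\ref{item:comparison between R_alpha mixing times, non-reversible} to spread the conclusion to arbitrary $\alpha\in(1,\infty)$. I would first establish the chain \ref{item:F2}$\Rightarrow$\ref{item:F5}$\Rightarrow$\ref{item:F4}$\Rightarrow$\ref{item:F3}$\Rightarrow$\ref{item:F2}, then \ref{item:F1}$\Rightarrow$\ref{item:F2}, and finally transfer between $f_\alpha$ and $R_\alpha$ via the identity \eqref{eq:identity between alpha-divergence and Renyi divergence} and the mixing-time identity \eqref{eq:relationship between mixing time of D_alpha and R_alpha} (both of which are free of any reversibility assumption).

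For \ref{item:F2}$\Rightarrow$\ref{item:F5}, the case $\alpha\in[2,\infty)$ is immediate: since $f_\alpha\in\mathcal{F}_{2,\alpha}$ by Example \ref{eg:alpha-divergence}, the argument \eqref{eq:Kolmogorov Chapman}--\eqref{eq:exponential convergence of d_f} from the proof of Theorem \ref{thm:f divergence cutoff F_pq} applies verbatim, because the only place reversibility enters is through the equality $\|P_{t,n}-\Pi_n\|_{L^2\to L^2}=e^{-\lambda_n t}$, which by Proposition \ref{prop: convergence rate of Markov semigroup} holds under the weaker assumption of normality. For $\alpha\in(1,2)$, I would invoke Theorem \ref{thm:comparison between mixing times for L^p and R_alpha, non-reversible}\ref{item:comparison between R_alpha mixing times, non-reversible} to compare $\widetilde t_{R_\alpha,n}(\varepsilon)$ with $\widetilde t_{R_2,n}(\varepsilon)$ up to constants $c_{\alpha,2},C_{\alpha,2}$ depending only on $\alpha$, and use the elementary relation $\widetilde d_{R_2,n}(t)=\ln(1+\widetilde d_{2,n}^2(t))$. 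The normal $L^2$ contraction transfers exponentially to $\widetilde d_{R_2,n}$ and then, via the comparison, to $\widetilde d_{R_\alpha,n}$ and $\widetilde d_{f_\alpha,n}$, yielding a $\bigl(\widetilde t_{f_\alpha,n}(\varepsilon),\lambda_n^{-1}\bigr)$ cutoff after absorbing the constant $C_{\alpha,2}$ into the window (equivalent windows up to multiplicative constants give the same cutoff).

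For \ref{item:F3}$\Rightarrow$\ref{item:F2}, I would derive a lower bound $\widetilde d_{f_\alpha,n}(t)\geq c_\alpha e^{-C_\alpha \lambda_n t}$ in the same spirit as \eqref{eq:lower bound of d_f}: Riesz--Thorin interpolation as in \eqref{eq:lower bound of q' to q' with 1<q'<=2}--\eqref{eq:lower bound of q' to q' with q'>2} together with the normality equality controls $\|P_{t,n}-\Pi_n\|_{L^{q'}\to L^{q'}}$ from below, and then the monotonicity comparison $f_\alpha(x)\geq m_\alpha|x-1|^q$ (for $\alpha\geq 2$) or Pinsker's inequality (for $1<\alpha<2$) closes the gap. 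The precutoff assumption combined with this lower bound forces $\lambda_n s_n\to\infty$ along any precutoff sequence, and \parencite[Corollary 2.5]{chen2008cutoff} then upgrades this to $\lambda_n\widetilde t_{f_\alpha,n}(\varepsilon)\to\infty$ for all $\varepsilon>0$. Step \ref{item:F1}$\Rightarrow$\ref{item:F2} is then routine: \eqref{eq:relationship between mixing time of D_alpha and R_alpha} converts a single $\alpha$-product condition into an $R_\alpha$-product condition, and Theorem \ref{thm:comparison between mixing times for L^p and R_alpha, non-reversible}\ref{item:comparison between R_alpha mixing times, non-reversible} spreads it to all $\beta\in(1,\infty)$.

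The main obstacle I anticipate is the case $\alpha\in(1,2)$ in the exponential-contraction step. Unlike the reversible treatment in Section \ref{sec:alpha-divergence, 1<alpha<=2}, which exploited non-linear Poincar\'e constants to obtain a direct differential inequality for $d_{f_\alpha}$ with sharp rate $\tfrac{4(\alpha-1)}{\alpha}\lambda_n$, for general normal generators only the $L^2$ operator-norm equality is available, so the rate and window constants for $1<\alpha<2$ will be those inherited from $R_2$ via Theorem \ref{thm:comparison between mixing times for L^p and R_alpha, non-reversible}\ref{item:comparison between R_alpha mixing times, non-reversible}. Verifying carefully that the multiplicative constants $c_{\alpha,\beta}, C_{\alpha,\beta}$ from the R\'enyi comparison are independent of $n$ (so that a window of order $\lambda_n^{-1}$ is preserved under comparison) is the subtle point; the discrete-time extension then follows by the same substitution $\lambda_n'=\min\{1,\lambda_n\}$ and the normality of $P_{1,n}-I$ as in the proof of Theorem \ref{thm:characterization of worst-case L^p cutoff with 1<p<infty, normal}.
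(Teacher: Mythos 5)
Your overall architecture matches the paper's (terse) proof: the chain (F2)$\Rightarrow$(F5)$\Rightarrow$(F4)$\Rightarrow$(F3)$\Rightarrow$(F2) plus (F1)$\Rightarrow$(F2), with Theorem~\ref{thm:comparison between mixing times for L^p and R_alpha, non-reversible}\ref{item:comparison between R_alpha mixing times, non-reversible} used for the spread across $\alpha$'s, the normality equality used only in the lower bound (F3)$\Rightarrow$(F2), and the identity \eqref{eq:identity between alpha-divergence and Renyi divergence} carrying the result between $f_\alpha$ and $R_\alpha$. Those pieces are all correct.

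The gap is precisely where you anticipated trouble: the handling of (F2)$\Rightarrow$(F5) for $\alpha\in(1,2)$. Routing through $R_2$ via the mixing-time comparison does not close it. Theorem~\ref{thm:comparison between mixing times for L^p and R_alpha, non-reversible}\ref{item:comparison between R_alpha mixing times, non-reversible} gives $c_{\alpha,2}\,\widetilde t_{R_\alpha,n}(\varepsilon)\leq\widetilde t_{R_2,n}(\varepsilon)\leq C_{\alpha,2}\,\widetilde t_{R_\alpha,n}(\varepsilon)$, i.e.\ the two mixing times agree only up to a multiplicative constant, which is an error of order $\widetilde t_{R_\alpha,n}(\varepsilon)\gg\lambda_n^{-1}$ under the product condition. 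This transfers the product condition and hence precutoff, but it cannot be ``absorbed into the window'' to conclude a $(\widetilde t_{f_\alpha,n}(\varepsilon),\lambda_n^{-1})$ cutoff, because the constants $c_{\alpha,2},C_{\alpha,2}$ scale $t_n$, not $w_n$.

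What closes the gap is a direct exponential-contraction argument that, like the $\alpha\geq 2$ case, does not need reversibility or normality in this direction. The centered $L^\alpha$ contraction from \eqref{eq:convergence rate of d_p(x,u)}, $\widetilde d_{\alpha,n}(u+v)\leq 2^{|1-2/\alpha|}e^{-\frac{2(\alpha-1)}{\alpha}\lambda_n v}\widetilde d_{\alpha,n}(u)$, holds for any Markov process since it only uses the one-sided bound in Proposition~\ref{prop: convergence rate of Markov semigroup}. Now set $N(t):=\|P_{t,n}\|_{L^{\alpha'}\to L^\infty}=\bigl(1+(\alpha-1)\widetilde d_{f_\alpha,n}(t)\bigr)^{1/\alpha}$; by the triangle inequality $N(t)-1\leq\widetilde d_{\alpha,n}(t)\leq N(t)+1$. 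Combining these yields, with $\phi_1(s):=(1+(\alpha-1)s)^{1/\alpha}-1$ and $\phi_2(s):=2\bigl((1+(\alpha-1)s)^{1/\alpha}+1\bigr)$,
\begin{equation*}
\phi_1\bigl(\widetilde d_{f_\alpha,n}(u+v)\bigr)\leq e^{-\frac{2(\alpha-1)}{\alpha}\lambda_n v}\,\phi_2\bigl(\widetilde d_{f_\alpha,n}(u)\bigr),
\end{equation*}
and although $\phi_2(0)>0$, both limits defining the $(\widetilde t_{f_\alpha,n}(\varepsilon),\lambda_n^{-1})$ cutoff go through: $\phi_1$ vanishes at the origin, giving $\widetilde G(c)\to0$, and $\phi_2(\infty)=\infty$, giving $\underline G(c)\to M$. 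The window constant $\frac{\alpha}{2(\alpha-1)}$ is independent of $n$, so the cutoff window is $\lambda_n^{-1}$ as required. This replaces the non-linear Poincar\'e route of Section~\ref{sec:alpha-divergence, 1<alpha<=2}, which genuinely needs reversibility for the monotonicity Lemma~\ref{lemma: monotonicity of rho and lambda}.
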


\subsection{Non-normal cases: A perturbation view}\label{subsec:nonnormal}
In this subsection, we shall consider non-normal Markov processes generated by suitably perturbing reversible processes. On a finite state space $\mathcal{X}$, we consider the Markov chain with transition matrix $P$ and its continuized chain with transition matrix $P_t=e^{t(P-I)}$. To obtain relatively tight bounds for mixing times under non-normal setting, we may need to use other quantities apart from the classical spectral gap $\lambda$ defined in Definition \ref{def:classical spectral gap}, which may meet trouble in obtaining the lower bound of mixing times without reversibility. An example can be found in \parencite[Page 106]{hermon2018technical}, where the worst-case TV mixing time is much smaller than the relaxation time $\lambda^{-1}$. There are already some techniques, like other way of defining the spectral gap of non-reversible processes, for example the multiplicative reversibilization in \parencite{fill1991eigenvalue}, pseudo-spectral gap in \parencite{paulin2015concentration} and Chatterjee's spectral gap in \parencite{chatterjee2025spectral}.

In the following part, we will use the eigenvalue of the second largest magnitude (resp.~real) part in discrete (resp.~continuous) time as the intermediate quantity in proving the equivalence of cutoff phenomenon between $L^p$ distances. 

\begin{proposition}
    [Lower bound in terms of eigenvalue for non-normal chains, \cite{montenegro2006mathematical} Theorem 4.9]
    \label{prop:lower bound in terms of eigenvalue, non-normal}
    For a finite state space Markov chain with transition matrix $P$, and its continuized chain $P_t=e^{t(P-I)}$, let $\beta_1$ be eigenvalue of $P$ with second largest magnitude, and $\gamma_1$ be the eigenvalue with second largest real part. Then, for the continuized chain,
    \begin{equation}\label{eq:lower bound in terms of eigenvalue, non-normal, continuous time}
        \widetilde d_1(t)\geq e^{-(1-\mathrm{Re}\hspace{0.1em}\gamma_1)t},\quad \widetilde t_1(\varepsilon)\geq \frac{1}{1-\mathrm{Re}\hspace{0.1em}\gamma_1}\ln \frac{1}{\varepsilon},\quad t\in [0,\infty),
    \end{equation}
    and for the discrete-time chain, 
    \begin{equation}\label{eq:lower bound in terms of eigenvalue, non-normal, discrete time}
        \widetilde d_1(k)\geq |\beta_1|^k,\quad \widetilde t_1(\varepsilon)\geq \frac{|\beta_1|}{1-|\beta_1|}\ln \frac{1}{\varepsilon}, \quad k\in \mathbb N.
    \end{equation}
\end{proposition}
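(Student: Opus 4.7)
The plan is to exhibit a single explicit witness for the lower bound on $\widetilde{d}_1$ by evaluating the semigroup on a suitably chosen eigenfunction. Fix any eigenvalue $\gamma \neq 1$ of $P$ with right eigenfunction $f$, normalized so that $\|f\|_\infty = 1$; the stationarity identity $\pi(f) = \pi(Pf) = \gamma\,\pi(f)$ then forces $\pi(f) = 0$. Pick a state $x^\ast \in \mathcal{X}$ at which $|f(x^\ast)| = 1$.

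In continuous time one has $P_t f = e^{t(\gamma - 1)} f$, so evaluating at $x^\ast$ and inserting $\Pi f = \pi(f) = 0$ yields
\[
 e^{-(1 - \mathrm{Re}\,\gamma)\, t} = \bigl|P_t f(x^\ast) - \Pi f(x^\ast)\bigr| = \Bigl|\sum_y \bigl(p_t(x^\ast, y) - \pi(y)\bigr) f(y)\Bigr| \leq \|f\|_\infty \cdot d_1(x^\ast, t).
\]
Taking the essential supremum over $x^\ast$ gives $\widetilde{d}_1(t) \geq e^{-(1 - \mathrm{Re}\,\gamma)t}$, and optimizing over admissible eigenvalues singles out $\gamma = \gamma_1$. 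The discrete-time case is identical after replacing $e^{t(\gamma-1)}$ by $\gamma^k$ and $p_t$ by $P^k$, yielding $\widetilde{d}_1(k) \geq |\gamma|^k$ and hence $\widetilde{d}_1(k) \geq |\beta_1|^k$ upon choosing $\gamma$ to maximize the modulus.

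The mixing-time inequalities then follow by inversion. In continuous time, the defining threshold $\widetilde{d}_1(\widetilde{t}_1(\varepsilon)) \leq \varepsilon$ combined with the lower bound on $\widetilde{d}_1(t)$ immediately gives $\widetilde{t}_1(\varepsilon) \geq (1 - \mathrm{Re}\,\gamma_1)^{-1}\ln(1/\varepsilon)$. In discrete time the analogous step gives $\widetilde{t}_1(\varepsilon) \geq \ln(1/\varepsilon)/\ln(1/|\beta_1|)$, which is weakened to the stated form via the elementary estimate $\ln(1/|\beta_1|) = -\ln\bigl(1 - (1 - |\beta_1|)\bigr) \leq (1-|\beta_1|)/|\beta_1|$, valid whenever $|\beta_1| \in (0,1)$.

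There is no deep obstacle here; the only point to handle with care is that $\gamma$ and $f$ may be complex-valued, but the entire chain of estimates is pointwise and relies only on the complex triangle inequality $|\sum_y c_y f(y)| \leq \sum_y |c_y|\,|f(y)|$, so no reversibility, normality, or diagonalizability assumption on $P$ is required. In particular the bound applies to the non-normal perturbations of reversible chains that are the focus of this subsection, and delivers the weaker ``pseudo--spectral'' information needed to compensate for the absence of the equality $\|P_t - \Pi\|_{L^2 \to L^2} = e^{-\lambda t}$ from Proposition \ref{prop: convergence rate of Markov semigroup}.
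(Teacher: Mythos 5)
Your argument is the standard eigenfunction (Wilson-type) lower bound, which is exactly how \parencite[Theorem 4.9]{montenegro2006mathematical} proceeds; the paper itself merely cites that reference without supplying a separate proof, so there is nothing to diverge from. The chain of estimates is correct: $\pi(f)=0$ for any eigenvector $f$ with eigenvalue $\gamma\neq 1$, pointwise evaluation at a state $x^\ast$ with $|f(x^\ast)|=\|f\|_\infty=1$ gives $e^{-(1-\mathrm{Re}\,\gamma)t}\le d_1(x^\ast,t)$ in continuous time and $|\gamma|^k\le d_1(x^\ast,k)$ in discrete time, and the passage to $\widetilde t_1(\varepsilon)$ via inversion and the elementary bound $\ln(1/|\beta_1|)\le(1-|\beta_1|)/|\beta_1|$ is exact.

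Two small points of hygiene. First, the phrase ``taking the essential supremum over $x^\ast$'' is not quite what you mean; $x^\ast$ is a single fixed state, and what you need is $d_1(x^\ast,t)\le\widetilde d_1(t)$, which holds because $\pi(x^\ast)>0$ for an irreducible finite chain (the $\pi$-essential supremum equals the plain maximum). Second, the bound is only non-trivial and the choice $\gamma=\gamma_1$ (resp.\ $\beta_1$) only makes sense under the implicit standing assumption that $1$ is a simple eigenvalue, i.e.\ the chain is irreducible (and, for the discrete-time magnitude bound, aperiodic); otherwise $\gamma_1$ could equal $1$ or $|\beta_1|$ could equal $1$, making the conclusion vacuous. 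Neither of these affects correctness in the intended setting, but both deserve a sentence.
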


Our motivation comes from a key observation. If a reversible transition matrix is slightly perturbed, then the coefficients of its characteristic polynomial have only minor changes, hence its spectrum should not change too much, and its spectral gap and $1-\mathrm{Re}\hspace{0.1em}\gamma_1$ or $1-|\beta_1|$ should be approximately the same. To rigorously give a perturbation bound, we will use the result from \parencite{cuenin2016non}.

\begin{definition}
    [$(Q,a,b)$-bounded perturbation]
    \label{def:(Q,a,b)-bounded perturbation}
    Let $Q$ be a self-adjoint linear operator on $L^2(\mathcal{X},\pi)$. We say the linear operator $A$ is $(Q,a,b)$-bounded if there exists $a,b\geq 0$ such that for any $f\in L^2(\mathcal{X},\pi)$, 
    \begin{equation}
        \left\|Af\right\|_2^2\leq a^2\left\|f\right\|_2^2+b^2\left\|Qf\right\|_2^2.
    \end{equation}
    
\end{definition}

\begin{proposition}
    [\cite{cuenin2016non}, Theorem 2.1, 2.12]
    \label{prop:perturbation results}
    Let $Q$ be a self-adjoint linear operator on $L^2(\mathcal{X},\pi)$, and $A$ is $(Q,a,b)$-bounded with $a\geq 0, 0\leq b<1$. Denote $\sigma(S)$ as the spectrum of any linear operator $S$, then the following statements hold.
    \begin{enumerate}[label=(\roman*)]
        \item\label{item:m multiplicity} Suppose $\lambda\in \sigma(Q)$ is an isolated eigenvalue with algebraic multiplicity $1\leq m<\infty$, set 
        \begin{equation*}
            \lambda_-:=\sup \{\nu\in \sigma(Q):\nu<\lambda\}, \quad \lambda_+:=\inf \{\nu\in \sigma(Q):\nu>\lambda\}.
        \end{equation*}
        If 
        \begin{equation}\label{eq:bounding condition for m multiplicity}
            \sqrt{a^2+b^2\lambda_-^2}+\sqrt{a^2+b^2\lambda^2}<\lambda-\lambda_-, \quad \sqrt{a^2+b^2\lambda^2}+\sqrt{a^2+b^2\lambda_+^2}<\lambda_+-\lambda,
        \end{equation}
        then the strip $\left\{z\in \mathbb C: \lambda-\sqrt{a^2+b^2\lambda^2}< \mathrm{Re}\hspace{0.1em} z< \lambda+\sqrt{a^2+b^2\lambda^2}\right\}$ contains exactly $m$ isolated eigenvalues of $Q+A$ (counted with algebraic multiplicity).

        \item\label{item:spectrum between hyperbolas} The spectrum $\sigma(Q+A)$ of $Q+A$ lies between hyperbolas:
        \begin{equation*}
            \sigma(Q+A)\subset \left\{z\in \mathbb C: |\mathrm{Im}\hspace{0.1em} z|^2\leq \frac{a^2+b^2|\mathrm{Re}\hspace{0.1em}z|^2}{1-b^2}\right\}.
        \end{equation*}
    \end{enumerate}
\end{proposition}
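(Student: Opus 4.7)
The plan is to prove part \ref{item:spectrum between hyperbolas} first, and then bootstrap part \ref{item:m multiplicity} from it via a Riesz-projection homotopy.

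For part \ref{item:spectrum between hyperbolas}, the cleanest route uses the resolvent identity together with the spectral theorem for the self-adjoint operator $Q$. The factorization $Q+A-z=(I+A(Q-z)^{-1})(Q-z)$ shows that if $z\notin\sigma(Q)$ and $\|A(Q-z)^{-1}\|_{L^2\to L^2}<1$, then $z\notin\sigma(Q+A)$. Combining the $(Q,a,b)$-bound with the spectral calculus gives, for any $h\in L^2(\mathcal{X},\pi)$,
\begin{equation*}
    \|A(Q-z)^{-1}h\|_2^2\leq a^2\|(Q-z)^{-1}h\|_2^2+b^2\|Q(Q-z)^{-1}h\|_2^2=\int_{\sigma(Q)}\frac{a^2+b^2\lambda^2}{|\lambda-z|^2}\,d\mu_h(\lambda),
\end{equation*}
where $\mu_h$ is the scalar spectral measure of $h$ associated with $Q$. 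Hence $\|A(Q-z)^{-1}\|<1$ as soon as $|\lambda-z|^2>a^2+b^2\lambda^2$ for every $\lambda\in\sigma(Q)$. Contrapositively, any $z=x+iy\in\sigma(Q+A)$ must admit some $\lambda\in\sigma(Q)\subset\mathbb{R}$ with $(1-b^2)\lambda^2-2x\lambda+(x^2+y^2-a^2)\leq 0$. Viewed as a quadratic in $\lambda$, nonnegativity of its discriminant simplifies (using $1-b^2\leq 1$) to $(1-b^2)|y|^2\leq a^2+b^2|x|^2$, which is the proposition's hyperbolic envelope.

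For part \ref{item:m multiplicity}, I would consider the homotopy $Q_t:=Q+tA$ for $t\in[0,1]$, noting that $tA$ is $(Q,ta,tb)$-bounded. The argument above localizes $\sigma(Q_t)$ inside the union of the \emph{dressed disks} $B_t(\mu):=\{z\in\mathbb{C}:|z-\mu|^2\leq t^2 a^2+t^2 b^2\mu^2\}$ as $\mu$ ranges over $\sigma(Q)$. The hypothesis \eqref{eq:bounding condition for m multiplicity} says exactly that $B_1(\lambda)$ is separated from $B_1(\lambda_\pm)$. To propagate the separation to every $\mu\in\sigma(Q)\setminus\{\lambda\}$, I invoke the Lipschitz estimate $|\sqrt{a^2+b^2 u^2}-\sqrt{a^2+b^2 v^2}|\leq b|u-v|$; since $b<1$, for $\mu<\lambda_-$ this gives
\begin{equation*}
    \sqrt{a^2+b^2\mu^2}+\sqrt{a^2+b^2\lambda^2}\leq \sqrt{a^2+b^2\lambda_-^2}+b(\lambda_--\mu)+\sqrt{a^2+b^2\lambda^2}<(\lambda-\lambda_-)+(\lambda_--\mu)=\lambda-\mu,
\end{equation*}
so $B_1(\mu)\cap B_1(\lambda)=\emptyset$; a symmetric estimate handles $\mu>\lambda_+$. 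Since the dressed disks are monotone nondecreasing in $t\in[0,1]$, the separation is uniform in $t$. Consequently one may draw a smooth closed Jordan contour $\Gamma$ enclosing $B_1(\lambda)$ and contained in $\rho(Q_t)$ for every $t\in[0,1]$, and the Riesz projection
\begin{equation*}
    P_t:=-\frac{1}{2\pi i}\oint_\Gamma (Q_t-z)^{-1}\,dz
\end{equation*}
is norm-continuous in $t$, so $\operatorname{rank}P_t$ is a continuous integer-valued function on $[0,1]$ and hence constant. At $t=0$, $P_0$ is the spectral projection of $Q$ onto the $\lambda$-eigenspace and has rank $m$; thus $\operatorname{rank}P_1=m$, which by Riesz functional calculus counts the sum of algebraic multiplicities of the eigenvalues of $Q+A$ inside $\Gamma$, all located in the proposition's strip since $B_1(\lambda)$ is.

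The main obstacle lies in propagating the two-neighbour separation hypothesis \eqref{eq:bounding condition for m multiplicity} to all of $\sigma(Q)\setminus\{\lambda\}$ uniformly in $t$; this works precisely because $b<1$ yields the Lipschitz contraction on $u\mapsto\sqrt{a^2+b^2 u^2}$, so the seemingly mild assumption $b<1$ is in fact doing essential work. A secondary technical point is verifying that $Q_t$ is closed on $\mathcal{D}(Q)$ and that $(Q_t-z)^{-1}$ is norm-continuous in $t$ along $\Gamma$; both follow from the uniform bound $\|A(Q_t-z)^{-1}\|<1$ available on $\Gamma$ by construction.
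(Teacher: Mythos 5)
This proposition is cited verbatim from Cuenin and Tretter (2016), Theorems 2.1 and 2.12, and the paper offers no proof of its own, so there is no in-paper argument to compare against. Judged on its own terms, your proof is sound and follows the standard operator-theoretic template that one would expect such a result to be built on.

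Your derivation of part \ref{item:spectrum between hyperbolas} is correct. The factorization $Q+A-z=(I+A(Q-z)^{-1})(Q-z)$, the spectral-calculus bound $\|A(Q-z)^{-1}\|^2\leq\sup_{\lambda\in\sigma(Q)}\frac{a^2+b^2\lambda^2}{|\lambda-z|^2}$, and the discriminant computation for the upward-opening quadratic (which exploits $1-b^2>0$) are all accurate; you actually prove the sharper envelope $(1-b^2)|y|^2\leq b^2|x|^2+(1-b^2)a^2$, which implies the stated one since $1-b^2\leq 1$. For part \ref{item:m multiplicity}, the homotopy $Q_t=Q+tA$ with $tA$ being $(Q,ta,tb)$-bounded, the localization in monotone dressed disks, and the rank-continuity of the Riesz projection along a $t$-uniform resolvent contour constitute a clean Kato-type continuation argument. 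The step you flag as the crux, namely propagating the two-neighbour separation in \eqref{eq:bounding condition for m multiplicity} to all $\mu\in\sigma(Q)\setminus\{\lambda\}$ via the $b$-Lipschitz bound on $u\mapsto\sqrt{a^2+b^2u^2}$, is indeed the load-bearing observation and is handled correctly; it is also what guarantees the strip in the conclusion meets only the disk $B_1(\lambda)$ among all dressed disks, so that the Riesz-projection count transfers to the strip. Two small things you elide: attainment of the supremum over $\sigma(Q)$ in the resolvent estimate (which holds because the integrand tends to $b^2<1$ as $\lambda\to\pm\infty$, and trivially in the finite-state-space application), and closedness of $Q_t$ on $\mathcal{D}(Q)$ together with norm-continuity of the resolvent along $\Gamma$; both are standard consequences of the relative bound $b<1$ and the uniform estimate $\|tA(Q-z)^{-1}\|<1$ on $\Gamma$, as you note. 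Whether your route is literally the one taken by Cuenin and Tretter I cannot certify from the paper under review, but it is a valid and self-contained proof of the stated claims.
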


\begin{theorem}
    [Characterization of worst-case $L^p$-cutoff with $1<p<\infty$, \textbf{non-normal}]
    \label{thm:L^p cutoff, non-normal}
    Consider a sequence of Markov processes $\{X_t^{(n)},t\in T\}_{n=1}^\infty$ on finite state space $\mathcal{X}_n$, stationary distribution $\pi_n$, spectral gap $\lambda_n$, and semigroup $P_{t,n}$, where $P_{t,n}$ is \textbf{non-normal} on $L^2(\mathcal{X}_n,\pi_n)$ for each $n\geq 1$. 
    
    If $T=[0,\infty)$, suppose $P_{t,n}=e^{t(W_n-I)}$, where $W_n$ is a transition matrix, and $1-\lambda_n$ is an isolated eigenvalue of $\frac{W_n+W_n^*}{2}$. Let 
    \begin{equation}\label{eq:eta_n third largest eigenvalue}
    \eta_n:=\sup \left\{\nu\in \sigma\left(\frac{W_n+W_n^*}{2}\right): \nu<1-\lambda_n\right\},
    \end{equation}
    assume $\frac{W_n-W_n^*}{2}$ is $\left(\frac{W_n+W_n^*}{2},a_n,b_n\right)$-bounded with 
    \begin{equation}\label{eq:a_n,b_n bounded}
        \sqrt{a_n^2+b_n^2}<\frac{1}{2}\min \left\{1-\lambda_n-\eta_n, \lambda_n\right\}.
    \end{equation}
    Let $\lim_{t\rightarrow\infty}\widetilde d_{p,n}(t)=0$ for each $1< p<\infty$, then the following statements hold:
    \begin{enumerate}[label=(\roman*)]
        \item\label{item:sufficient condition for L^p, non-normal} For any $1<p<\infty$ and any $\varepsilon>0$, if $\lambda_n \widetilde t_{p,n}(\varepsilon)\rightarrow\infty$, then there is a $(\widetilde t_{p,n}(\varepsilon), \lambda_n^{-1})$ cutoff.

        \item\label{item:equivalence of L^p cutoff, non-normal} If for some $1<p<\infty$ and some $\varepsilon>0$, there is a $(\widetilde t_{p,n}(\varepsilon), \lambda_n^{-1})$ cutoff, then for any $1<q<\infty$ and any $\delta>0$, there is a $(\widetilde t_{q,n}(\delta), \lambda_n^{-1})$ cutoff. 
    \end{enumerate}

    If $T=\mathbb N$, suppose $P_{k,n}=W_n^k$, $1-\lambda_n$ is an isolated eigenvalue of $\frac{W_n+W_n^*}{2}$, and $\lambda_n\rightarrow 0$. We still denote $\eta_n$ as in \eqref{eq:eta_n third largest eigenvalue} and under the same assumption of \eqref{eq:a_n,b_n bounded}, we further assume that $b_n^2<\frac{3}{4}$ for any $n\geq 1$. Let $\lim_{t\rightarrow\infty}\widetilde d_{p,n}(t)=0$ for each $1< p<\infty$, and substitute $\lambda_n'=\min\{1,\lambda_n\}$ into $\lambda_n$ in the two items, then the statements above also hold. 
\end{theorem}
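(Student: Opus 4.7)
The plan is to mirror the architecture of Theorem \ref{thm:f divergence cutoff F_pq}, replacing the reversibility-based $L^2$ decay with a Dirichlet-form / perturbation bound that survives in the non-normal setting. The key structural fact is that $\lambda_n$, defined as the spectral gap of the symmetric part $S_n := \tfrac{W_n + W_n^*}{2}$, still controls the $L^2 \to L^2$ contraction of $P_{t,n}$. For $T = [0,\infty)$ this is automatic: for $f \perp 1$ in $L^2(\pi_n)$, the identity $\langle f, (W_n - I) f\rangle_{\pi_n} = \langle f, (S_n - I) f\rangle_{\pi_n}$ combined with the spectral gap of $S_n$ gives $\tfrac{d}{dt}\|P_{t,n} f\|_2^2 \leq -2\lambda_n \|P_{t,n} f\|_2^2$ and hence $\|P_{t,n} - \Pi_n\|_{L^2 \to L^2} \leq e^{-\lambda_n t}$, exactly as in Proposition \ref{prop: convergence rate of Markov semigroup} (which does not require normality). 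For $T = \mathbb N$ I would decompose $W_n = S_n + A_n$ with $A_n := \tfrac{W_n - W_n^*}{2}$ and apply Cauchy–Schwarz together with the $(S_n, a_n, b_n)$-bounded definition:
\begin{equation*}
\|W_n f\|_2 \leq \|S_n f\|_2 + \|A_n f\|_2 \leq (1 - \lambda_n)\|f\|_2 + \sqrt{a_n^2 + b_n^2(1 - \lambda_n)^2}\,\|f\|_2
\end{equation*}
for $f \perp 1$, so that hypothesis \eqref{eq:a_n,b_n bounded} forces $\kappa_n \leq 1 - \lambda_n/2$ and therefore $-\ln \kappa_n \geq \lambda_n/2 = \Theta(\lambda_n)$.

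With $L^2 \to L^2$ contraction at rate $\Theta(\lambda_n)$ in hand, the proof of item \ref{item:sufficient condition for L^p, non-normal} then follows the template of \ref{item:B2}$\Rightarrow$\ref{item:B5} in Theorem \ref{thm:f divergence cutoff F_pq}: Riesz–Thorin interpolation between the trivial Markov bounds $\|P_{v,n} - \Pi_n\|_{L^1 \to L^1} \leq 2$ and $\|P_{v,n} - \Pi_n\|_{L^\infty \to L^\infty} \leq 2$ and the $L^2 \to L^2$ estimate yields $\|P_{v,n} - \Pi_n\|_{L^{p'} \to L^{p'}} \leq C_p e^{-c_p \lambda_n v}$ for every $p' \in (1,\infty)$; the Kolmogorov–Chapman identity \eqref{eq:Kolmogorov Chapman} plus H\"older as in \eqref{eq:convergence rate of d_p(x,u)} delivers the exponential contraction $\widetilde d_{p,n}(u+v) \leq C_p \widetilde d_{p,n}(u) e^{-c_p \lambda_n v}$; and the standard window argument (taking $u > \widetilde t_{p,n}(\varepsilon)$, $v = c\lambda_n^{-1}$, and sending $c \to \pm\infty$) produces the $(\widetilde t_{p,n}(\varepsilon), \lambda_n^{-1})$ cutoff.

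For item \ref{item:equivalence of L^p cutoff, non-normal} I would exploit the definition of cutoff window directly rather than re-running a contraction argument. A $(\widetilde t_{p,n}(\varepsilon), \lambda_n^{-1})$ cutoff requires $\lambda_n^{-1} = o(\widetilde t_{p,n}(\varepsilon))$ by Definition \ref{def:cutoff phenomenon}, i.e.\ $\lambda_n \widetilde t_{p,n}(\varepsilon) \to \infty$; by \parencite[Corollary 2.5]{chen2008cutoff} the same divergence holds for every $\varepsilon' > 0$. The non-normal comparison bound of Theorem \ref{thm:comparison between mixing times for L^p and R_alpha, non-reversible} \ref{item:comparison between L^p mixing times, non-reversible}, namely $m_{p,q} \widetilde t_{p,n}(\psi_{p,q}(\delta)) \leq \widetilde t_{q,n}(\delta)$, then propagates the product condition $\lambda_n \widetilde t_{q,n}(\delta) \to \infty$ to every $q \in (1,\infty)$ and $\delta > 0$, and applying item \ref{item:sufficient condition for L^p, non-normal} at $(q,\delta)$ produces the desired $(\widetilde t_{q,n}(\delta), \lambda_n^{-1})$ cutoff.

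The main obstacle is the discrete-time $L^2$ estimate. Without normality one cannot identify $\|P_{t,n} - \Pi_n\|_{L^2 \to L^2}$ with a clean spectral quantity, so the bound on $\kappa_n$ must be extracted purely from the perturbation data, and the margin encoded in \eqref{eq:a_n,b_n bounded}—together with the auxiliary $b_n^2 < 3/4$—is exactly what makes the cross term $2\mathrm{Re}\,\langle S_n f, A_n f\rangle_{\pi_n}$ in the expansion of $\|W_n f\|_2^2$ absorbable into a $(1 - \Theta(\lambda_n))$ bound; tightening the constant here is delicate. A secondary subtlety worth verifying is that Proposition \ref{prop:perturbation results} \ref{item:m multiplicity} actually applies at the eigenvalue $\lambda = 1 - \lambda_n$ of $S_n$, which requires translating the margin condition \eqref{eq:a_n,b_n bounded} into the hypothesis \eqref{eq:bounding condition for m multiplicity}; this step, while not strictly needed for the argument above, confirms that $\lambda_n$ genuinely is the relevant cutoff scale by pinning down the second-largest-real-part eigenvalue of $W_n$ in a strip of width $O(\sqrt{a_n^2+b_n^2}) = o(\lambda_n)$ around $1 - \lambda_n$, and could alternatively be used via Proposition \ref{prop:lower bound in terms of eigenvalue, non-normal} to obtain the lower bound $\widetilde t_{p,n}(\varepsilon) \gtrsim \lambda_n^{-1}\ln(1/\varepsilon)$ directly.
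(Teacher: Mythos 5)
Your overall architecture (exponential contraction from Riesz--Thorin for item (i), product-condition propagation via Theorem~\ref{thm:comparison between mixing times for L^p and R_alpha, non-reversible} for item (ii)) matches the paper's, but your route through item (ii) is genuinely different and worth flagging. You extract $\lambda_n\widetilde t_{p,n}(\varepsilon)\to\infty$ directly from the definitional requirement $w_n=o(t_n)$ in Definition~\ref{def:cutoff phenomenon}(iii), so the perturbation hypothesis \eqref{eq:a_n,b_n bounded} and Proposition~\ref{prop:perturbation results} are never invoked; the entire spectral-perturbation apparatus becomes dead weight in your proof. The paper instead weakens the hypothesis of item (ii) to \emph{precutoff}, uses Propositions~\ref{prop:lower bound in terms of eigenvalue, non-normal} and~\ref{prop:perturbation results} to pin the second-largest-real-part eigenvalue of $W_n$ in a strip of radius $O(\sqrt{a_n^2+b_n^2})$ around $1-\lambda_n$ and deduce $\widetilde t_{1,n}(\varepsilon_1)\geq \tfrac{2}{3\lambda_n}\ln\tfrac{1}{\varepsilon_1}$, and then derives $\lambda_n\widetilde t_{p,n}(\delta)\to\infty$ by contradiction with Proposition~\ref{prop:cutoff and mixing time}. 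In other words, the paper proves the strictly stronger ``precutoff $\Rightarrow$ product condition'' direction, which is what actually justifies carrying the perturbation hypothesis in the theorem statement. Your shortcut is logically valid for item (ii) as literally stated, but you should be aware that it sidesteps the substance of the hypothesis (and that your ``secondary subtlety'' at the end is in fact the paper's main argument, not an optional check).

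There is a real gap in your discrete-time $L^2\to L^2$ estimate. You write $\|W_n f\|_2\leq \|S_n f\|_2 + \|A_n f\|_2 \leq (1-\lambda_n)\|f\|_2 + \sqrt{a_n^2+b_n^2(1-\lambda_n)^2}\,\|f\|_2$ for $f\perp 1$, using $\|S_n f\|_2\leq (1-\lambda_n)\|f\|_2$ in two places. But $\lambda_n$ is the spectral gap, i.e.\ it controls only the second \emph{largest} eigenvalue of $S_n=\tfrac{W_n+W_n^*}{2}$; the self-adjoint $S_n$ restricted to $1^\perp$ has operator norm $\max\{1-\lambda_n,|\lambda_{\min}(S_n)|\}$, and nothing in the hypotheses of the theorem (in particular not \eqref{eq:a_n,b_n bounded}, nor $\lambda_n\to 0$, nor $b_n^2<3/4$) forces $\lambda_{\min}(S_n)\geq -(1-\lambda_n)$. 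Without that (essentially a laziness requirement on the symmetrized chain, cf.\ \eqref{eq:1/2 lazy chains} in the reversible discrete-time statements), your claimed $\kappa_n\leq 1-\lambda_n/2$ does not follow, and neither does the decay rate $\Theta(\lambda_n)$ that item (i) needs. The paper itself does not spell out item (i) for $T=\mathbb N$, so this may be a shared weakness, but as your proof is written the step is incorrect and needs either an added hypothesis on $\lambda_{\min}(S_n)$ or an argument that the perturbation condition supplies the missing control.
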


\begin{remark}
    The assumption of \eqref{eq:a_n,b_n bounded} can be readily verified in some instances. As a concrete example, suppose $U$ is a reversible transition matrix on $L^2(\mathcal{X},\pi)$, and transition matrix $V$ admits $\pi$ as its stationary distribution. Consider the following linear combination given by
    \begin{equation*}
        W:=(1-\varepsilon)U+\varepsilon V,\quad \varepsilon\in (0,1),
    \end{equation*}
    which can be non-normal. In order that $\frac{W-W^*}{2}$ is $\left(\frac{W+W^*}{2},a,b\right)$-bounded, we need to ensure
    \begin{equation}\label{eq:assumption in terms of U and V}
        \left\|\varepsilon \frac{V-V^*}{2}f\right\|_2^2\leq a^2\left\|f\right\|_2^2+b^2\left\|\left((1-\varepsilon)U+\varepsilon  \frac{V+V^*}{2}\right)f\right\|_2^2, \quad \forall f\in L^2(\mathcal{X},\pi).
    \end{equation}
    Since the left hand side above is smaller than $\varepsilon^2\left\|f\right\|_2^2$, after fixing $a$ and $b$ which satisfy \eqref{eq:a_n,b_n bounded}, we can take any $0<\varepsilon\leq a$ and \eqref{eq:assumption in terms of U and V} holds. 
\end{remark}

\begin{proof}
    It suffices to prove item \ref{item:equivalence of L^p cutoff, non-normal}. We first consider the continuous-time case. Denote $\gamma_{1,n}$ as the eigenvalue of $W_n$ with second largest real part. Since $W_n$ can be viewed as adding a perturbation to a reversible transition matrix: 
    \begin{equation*}
        W_n=\frac{W_n+W_n^*}{2}+\frac{W_n-W_n^*}{2},
    \end{equation*}
    then by condition \eqref{eq:a_n,b_n bounded}, we have 
    \begin{gather*}
        \sqrt{a_n^2+b_n^2\eta_n^2}+\sqrt{a_n^2+b_n^2(1-\lambda_n)^2}\leq 2\sqrt{a_n^2+b_n^2}<1-\lambda_n-\eta_n,\\
        \sqrt{a_n^2+b_n^2(1-\lambda_n)^2}+\sqrt{a_n^2+b_n^2}\leq 2\sqrt{a_n^2+b_n^2}<\lambda_n,
    \end{gather*}
    then by Proposition \ref{prop:perturbation results} item \ref{item:m multiplicity}, we have 
    \begin{align*}
        1-\mathrm{Re}\hspace{0.1em}\gamma_{1,n}&\leq 1-\left((1-\lambda_n)-\sqrt{a_n^2+b_n^2(1-\lambda_n)^2}\right)\\
        &\leq \lambda_n+\sqrt{a_n^2+b_n^2}\\
        &\leq \frac{3}{2}\cdot\lambda_n,
    \end{align*}
    together with \eqref{eq:lower bound in terms of eigenvalue, non-normal, continuous time}, for any $\varepsilon_1>0$,
    \begin{equation}\label{eq:lower bound of mixing time in terms of spectral gap, L^p, non-normal}
        \widetilde t_{p,n}(\varepsilon_1)\geq \widetilde t_{1,n}(\varepsilon_1)\geq \frac{2}{3\lambda_n}\ln \frac{1}{\varepsilon_1},
    \end{equation}
    hence
    \begin{equation}\label{eq:lower bound of ratio, non-normal, continuous time}
        \frac{\widetilde t_{p,n}(\varepsilon_1)}{\widetilde t_{p,n}(1/4)}\geq \frac{2}{3\lambda_n\widetilde t_{p,n}(1/4)}\ln \frac{1}{\varepsilon_1}.
    \end{equation}
    The condition in item \ref{item:equivalence of L^p cutoff, non-normal} implies precutoff occurs for some $1<p<\infty$, then if 
    \begin{equation}\label{eq:contradiction of not to infty}
        \liminf_{n\rightarrow\infty}\lambda_n\widetilde t_{p,n}(1/4)=c<\infty,
    \end{equation}
    plugging into \eqref{eq:lower bound of ratio, non-normal, continuous time} leads to
    \begin{equation*}
        \limsup_{n\rightarrow\infty}\frac{\widetilde t_{p,n}(\varepsilon_1)}{\widetilde t_{p,n}(1/4)}\geq \frac{2}{3c}\ln \frac{1}{\varepsilon_1},
    \end{equation*}
    take $\varepsilon_1\rightarrow 0$, according to \parencite[Proposition 2.3]{chen2008cutoff}, we get the contradiction of \eqref{eq:contradiction of not to infty}. Therefore, for any $\delta>0$, $\lambda_n\widetilde t_{p,n}(\delta)\rightarrow\infty$. Then according to Theorem \ref{thm:comparison between mixing times for L^p and R_alpha, non-reversible} item \ref{item:comparison between L^p mixing times, non-reversible}, for any $1<q<\infty$, $\lambda_n\widetilde t_{q,n}(\delta)\rightarrow\infty$, and hence there is a $(\widetilde t_{q,n}(\delta), \lambda_n^{-1})$ cutoff.

    Next, we consider the discrete-time case. Denote $\beta_{1,n}$ as the eigenvalue of $W_n$ with second largest magnitude. Similar to the argument above, and according to Proposition \ref{prop:perturbation results} item \ref{item:spectrum between hyperbolas}, we have 
    \begin{align*}
        1-|\beta_{1,n}|&\leq |1-\beta_{1,n}|\leq 1-\mathrm{Re}\hspace{0.1em}\beta_{1,n}+|\mathrm{Im}\hspace{0.1em}\beta_{1,n}|\\
        &\leq 1-\left((1-\lambda_n)-\sqrt{a_n^2+b_n^2(1-\lambda_n)^2}\right)+\sqrt{\frac{a_n^2+b_n^2}{1-b_n^2}}\\
        &\leq \lambda_n+3\sqrt{a_n^2+b_n^2}\\
        &\leq 4\lambda_n,
    \end{align*}
    where in the third inequality we have used $b_n^2<\frac{3}{4}$. Since $\lambda_n\rightarrow 0$, we have $|\beta_{1,n}|\rightarrow 1$. Then by \eqref{eq:lower bound in terms of eigenvalue, non-normal, discrete time}, when $n$ is sufficiently large, for any $\varepsilon_1>0$,
    \begin{equation*}
        \widetilde t_{p,n}(\varepsilon_1)\geq \widetilde t_{1,n}(\varepsilon_1)\geq \frac{1}{8\lambda_n}\ln \frac{1}{\varepsilon_1},
    \end{equation*}
    which is similar to \eqref{eq:lower bound of mixing time in terms of spectral gap, L^p, non-normal}, then we get the result. 
\end{proof}

\begin{remark}
    Theorem \ref{thm:L^p cutoff, non-normal} indicates that under the assumption \eqref{eq:a_n,b_n bounded}, $L^p$-cutoff are equivalent to $L^2$-cutoff for $1<p<\infty$. Moreover, for $\alpha$-divergence and R\'enyi divergence, the proof and result are similar, and we can obtain that they are also $L^2$-type divergences under cutoff phenomenon.
\end{remark}

\section*{Acknowledgements}
Youjia Wang gratefully acknowledges the financial support from National University of Singapore via the Presidential Graduate Fellowship. Michael Choi acknowledges the financial support of the project “MAPLE: Mechanistic Accelerated Prediction of Protein Secondary Structure via LangEvin Monte Carlo” with grant number 22-5715-P0001 under the NUS Faculty of Science Ministry of Education Tier 1 grant Data for Science and Science for Data collaborative scheme, project NUSREC-HPC-00001 and NUSREC-CLD-00001 for NUS HPC-AI Priority Projects for Research Program, as well as the startup funding of the National University of Singapore with grant number A-0000178-01-00.

\printbibliography

\end{document}